\def\BState{\State\hskip-\ALG@thistlm}
\def\downbar#1{
\setbox10=\hbox{$#1$}
            \dimen10=\ht10 \advance\dimen10 by 2.5pt
            \ifdim \dimen10<15pt 
               \advance\dimen10 by -0.5pt
               \dimen11=\dimen10
               \advance\dimen10 by 2.5pt
               \lower \dimen11
            \else \lower \ht10 \fi
            \hbox {\hskip 1.5pt \vrule height \dimen10 depth \dp10}}
\def\upbar#1{
\setbox10=\hbox{$#1$}
            \dimen10=\ht10 \advance\dimen10 by \dp10 \advance\dimen10 by 2.5pt
            \ifdim \dimen10<15pt 
                \advance\dimen10 by 2pt \fi
            \raise 2.5pt \hbox {\hskip -1.5pt \vrule height \dimen10}}
\newtheorem{definition}{\bf Definition}[section]
\newtheorem{theorem}{\bf Theorem}[section]
\newtheorem{proposition}{\bf Proposition}[section]
\newtheorem{lemma}{\bf Lemma}[section]
\newtheorem{corollary}{\bf Corollary}[section]
\newtheorem{remark}{\bf Remark}[section]
\newtheorem*{thmA}{\bf Theorem A}
\newtheorem*{thmB}{\bf Theorem B}
\newtheorem*{thmC}{\bf Theorem C}
\numberwithin{equation}{section}
\begin{document}
\title[On the functional equation for classical OPS]{On the functional equation for classical orthogonal polynomials on lattices}

\author{K. Castillo}
\address{University of Coimbra, CMUC, Dep. Mathematics, 3001-501 Coimbra, Portugal}
\email{kenier@mat.uc.pt}
\author{D. Mbouna}
\address{University of Coimbra, CMUC, Dep. Mathematics, 3001-501 Coimbra, Portugal}
\email{mbouna@mat.uc.pt}
\author{J. Petronilho}
\address{University of Coimbra, CMUC, Dep. Mathematics, 3001-501 Coimbra, Portugal}
\email{josep@mat.uc.pt}

\subjclass[2010]{42C05, 33C45}
\date{\today}
\keywords{Functional equation, regular functional, classical orthogonal polynomials, lattices, Racah polynomials, Askey-Wilson polynomials}

\begin{abstract}
Necessary and sufficient conditions for the regularity 
of solutions of the functional equation
appearing in the theory of classical orthogonal polynomials on lattices are stated.
Moreover, the functional Rodrigues formula and a closed formula for the recurrence coefficients are presented.
\end{abstract}
\maketitle

\section{Introduction}\label{introduction}

Let $\mathcal{P}$ be the vector space of all polynomials with complex coefficients
and let $\mathcal{P}^*$ be its algebraic dual.
$\mathcal{P}_n$ denotes the space of all polynomials with degree less than or equal to $n$.
Define $\mathcal{P}_{-1}:=\{0\}$.
A simple set in $\mathcal{P}$ is a sequence $(P_n)_{n\geq0}$ such that
$P_n\in\mathcal{P}_n\setminus\mathcal{P}_{n-1}$ for each $n$.
A simple set $(P_n)_{n\geq0}$ is called an orthogonal polynomial sequence (OPS)
with respect to ${\bf u}\in\mathcal{P}^*$ if 
$$
\langle{\bf u},P_nP_k\rangle=h_n\delta_{n,k}\quad(n,k=0,1,\ldots;\;h_n\in\mathbb{C}\setminus\{0\})\;,
$$
where $\langle{\bf u},f\rangle$ is the action of ${\bf u}$ on $f\in\mathcal{P}$. 
${\bf u}$ is called regular, or quasi-definite, if
there exists an OPS with respect to it.
It is well known that ${\bf u}$ is regular if and only if $\det\big[u_{i+j}\big]_{i,j=0}^n\neq0$ for each $n=0,1,\ldots$, 
where $u_n:=\langle{\bf u},z^n\rangle$. 
In 1940 Ya. L. Geronimus proved the following result \cite[Theorem II]{G1940}: 
\vspace{2mm}
\begin{changemargin}{0.8cm}{0.8cm} 
{\em A sequence of polynomials and the sequence of its derivatives are orthogonal with respect to sequences of moments\footnote{
Recall that given a sequence of complex numbers $(u_n)_{n\geq0}$, $(P_n)_{n\geq0}$ is said to be orthogonal with respect to $(u_n)_{n\geq0}$ if it is an OPS with respect to the functional ${\bf u}\in\mathcal{P}^*$ defined by 
$$
\langle {\bf u},f\rangle:=\sum_{k=0}^na_{k}u_k\;,\quad f(z)=\sum_{k=0}^na_{k}z^k\in\mathcal{P}\,.
$$}
 $(u_n)_{n\geq0}$ and $(v_n)_{n\geq0}$, respectively, if and only if 
\begin{equation}\label{A1}
(na+d)u_{n+1}+(nb+e)u_n+ncu_{n-1}=0 
\end{equation}
for each $n$, where $a,b,c,d,e$ are complex numbers such that
$na+d\neq0$ and $\det\big[u_{i+j}\big]_{i,j=0}^n\neq0$, and} 
\begin{equation}\label{A3}
v_n=au_{n+2}+bu_{n+1}+cu_{n}\,.
\end{equation}
\end{changemargin}
\vspace{2mm}

Given ${\bf u}\in\mathcal{P}^*$ and $g\in\mathcal{P}$, the derivative of ${\bf u}$ and the left multiplication of ${\bf u}$ by $g$  are the functionals ${\bf D}{\bf u}\in\mathcal{P}^*$ and $g{\bf u}\in\mathcal{P}^*$ defined by
$$
\langle {\bf D}{\bf u},f\rangle:=-\langle{\bf u}, f'\rangle \;,\quad
\langle g{\bf u},f\rangle:=\langle{\bf u}, fg\rangle\quad(f\in\mathcal{P})\;.
$$
This allows us to rewrite the difference equation \eqref{A1} as a functional equation\footnote{Usually 
\eqref{GPa} is called Pearson (or Pearson-type) functional (or distributional) equation.
Without wanting to get into nitpicking about the nature of the name, 
it seems appropriate to call \eqref{GPa} Geronimus-Pearson (functional) equation.}
\begin{equation}\label{GPa}
{\bf D}(\phi{\bf u})=\psi{\bf u}\;,
\end{equation}
where $\phi(z)=az^2+bz+c$ and $\psi(z)=dz+e$, and 
\eqref{A3} as ${\bf v}=\phi{\bf u}$, being ${\bf u}$ and ${\bf v}$ the linear functionals on $\mathcal{P}$ given by 
$\langle{\bf u},z^n\rangle=u_n$ and $\langle{\bf v},z^n\rangle=v_n$.
It is worth mentioning that Geronimus' result was motivated by Hahn's characterization of the classical OPS\footnote{As pointed out in \cite{ARS1995}, this characterization was indeed stated before by Sonine \cite{S1887}.} of Hermite, Laguerre, Jacobi, and Bessel ---the only OPS such that their sequences of derivatives are also OPS \cite{H1935}.  
We call a functional {\it ${\bf u}\in\mathcal{P}^*$ classical in Hahn's sense}, or, simply, $\mathrm{D}-${\it classical}, if the corresponding OPS is such that the associated sequence of derivatives is also an OPS. 
Accordingly, Geronimus' theorem can be rewritten as follows\footnote{As an application of this result, Geronimus also proved that a monic OPS $(P_n)_{n\geq0}$ is $\mathrm{D}-$classical if  $$P_n=\mbox{$\frac{1}{n+1}$}P_{n+1}'+b_nP_n'+c_nP_{n-1}'\quad (n=0,1,\ldots),\quad P_{-1}=0\,,$$  where $b_n$ and $c_n$ are complex numbers (see  \cite[(42)]{G1940}). Of course, the converse of this sentence was known long before.}:

\begin{thmA}
${\bf u}\in\mathcal{P}^*$ is $\mathrm{D}-$classical if and only if there exist $\phi\in\mathcal{P}_2$ and $\psi\in\mathcal{P}_1$ such that ${\bf u}$ satisfies \eqref{GPa} and the conditions 
\begin{equation}\label{A2a}
na+d\neq0\;,\quad H_n:=\det\big[u_{i+j}\big]_{i,j=0}^n\neq0 \quad (n=0,1,2,\ldots)
\end{equation}
hold,
where $a:=\phi^{\prime\prime}/2$, $d:=\psi'$, and $u_n:=\langle{\bf u},z^n\rangle$.
\end{thmA}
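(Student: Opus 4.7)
The plan is to reduce Theorem A directly to Geronimus' theorem recalled above, by translating the moment-sequence conditions (\ref{A1}) and (\ref{A3}) into functional-equation language through the definitions of $\mathbf{D}\mathbf{u}$ and of $g\mathbf{u}$. Once this dictionary is in place, both implications should follow by inspection.

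First I would write $\phi(z)=az^2+bz+c$, $\psi(z)=dz+e$ and pair both sides of (\ref{GPa}) with the monomial $z^n$. Direct computation gives
\[
\langle\mathbf{D}(\phi\mathbf{u}),z^n\rangle=-n\langle\mathbf{u},z^{n-1}\phi\rangle=-n\bigl(au_{n+1}+bu_n+cu_{n-1}\bigr),\qquad \langle\psi\mathbf{u},z^n\rangle=du_{n+1}+eu_n,
\]
for every $n\geq 0$; equating the two and rearranging yields exactly (\ref{A1}), so that (\ref{GPa}) and the scalar recurrence (\ref{A1}) are equivalent with the same constants, and the identifications $a=\phi''/2$, $d=\psi'$ are automatic. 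A parallel computation shows $\langle\phi\mathbf{u},z^n\rangle=au_{n+2}+bu_{n+1}+cu_n$, so (\ref{A3}) translates into the identity $\mathbf{v}=\phi\mathbf{u}$.

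For the forward implication I would take the OPS $(P_n)$ with respect to a $\mathrm{D}$-classical $\mathbf{u}$ and the derivative OPS $(P_n')$ with respect to some $\mathbf{v}\in\mathcal{P}^*$, set $v_n:=\langle\mathbf{v},z^n\rangle$, and apply Geronimus' theorem to extract constants $a,b,c,d,e$ for which (\ref{A1}), (\ref{A3}), $na+d\neq 0$ and $H_n\neq 0$ all hold. Setting $\phi:=az^2+bz+c$ and $\psi:=dz+e$, the dictionary of the previous step delivers (\ref{GPa}) and (\ref{A2a}). Conversely, starting from $\phi\in\mathcal{P}_2$ and $\psi\in\mathcal{P}_1$ for which (\ref{GPa}) and (\ref{A2a}) hold, the same dictionary yields (\ref{A1}) with the same coefficients; since $H_n\neq 0$ means $\mathbf{u}$ is regular and hence admits an OPS, Geronimus' theorem forces the derivative sequence to be an OPS as well, so $\mathbf{u}$ is $\mathrm{D}$-classical. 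The whole argument is essentially transcription, so I do not anticipate a genuine obstacle; the only point needing care is that the non-degeneracy $na+d\neq 0$ and the regularity $H_n\neq 0$ in Geronimus' scalar statement must be imposed jointly in order to match (\ref{A2a}), neither of them implying the other.
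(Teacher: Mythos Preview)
Your proposal is correct and follows precisely the route the paper itself indicates: the paper does not give an independent proof of Theorem~A but simply observes that \eqref{A1} and \eqref{A3} are the moment-by-moment transcriptions of \eqref{GPa} and of $\mathbf{v}=\phi\mathbf{u}$, so that Theorem~A is Geronimus' theorem ``rewritten'' in functional language. Your dictionary computation and both implications match this exactly.
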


\noindent
The condition $na+d\neq0$ for every $n$ means that $(\phi,\psi)$ is an admissible pair \cite{M1991}.
Note that, concerning the applicability of Theorem A, although it is trivial to check the admissibility condition for a given pair $(\phi,\psi)\in\mathcal{P}_2\times\mathcal{P}_1$, the same does not holds (in general) for the regularity condition $H_n\neq0$, because the order of $H_n$ grows with $n$. Theorem B in bellow  improves Theorem A, showing that conditions \eqref{A2a} may be replaced by rather simple ones ---see \eqref{Aregular} in bellow---, as well as how to compute the monic OPS $(P_n)_{n\geq0}$ with respect to ${\bf u}$ using only the pair $(\phi,\psi)$. This is achieved from the explicit formulas for the coefficients $B_n$ and $C_n$ appearing in the three-term recurrence relation (TTRR) for $(P_n)_{n\geq0}$, namely
\begin{equation}\label{ATTRRa}
zP_n(z)=P_{n+1}(z)+B_nP_n(z)+C_nP_{n-1}(z)\quad (n=0,1,\ldots)\,,\quad P_{-1}(z)=0\,.
\end{equation}

\begin{thmB}
${\bf u}\in\mathcal{P}^*\backslash\{\bf 0\}$ is $\mathrm{D}-$classical if and only if there exist polynomials 
$\phi(z):=az^2+bz+c$ and $\psi(z):=dz+e$ such that \eqref{GPa} holds and 
\begin{equation}\label{Aregular}
na+d\neq0\;,\quad \phi\left(-\frac{nb+e}{2na+d}\right)\neq0 \quad (n=0,1,\ldots).
\end{equation}
Under such conditions, the monic OPS $(P_n)_{n\geq0}$ with respect to ${\bf u}$ fulfils \eqref{ATTRRa} with
\begin{equation}\label{ATTRRAb}
B_n=\frac{ne_{n-1}}{d_{2n-2}}-\frac{(n+1)e_{n}}{d_{2n}}\, ,\;
C_{n+1}=-\frac{(n+1)d_{n-1}}{d_{2n-1}d_{2n+1}}\phi\Big(-\frac{e_n}{d_{2n}}\Big)
\; (n=0,1,\ldots)
\end{equation}
where $d_n:=na+d$ and $e_n:=nb+e$. Moreover, the following functional Rodrigues formula holds:
$$
P_n{\bf u}=k_n\,{\bf D}^n\big(\phi^n{\bf u}\big)\;,\quad k_n:=\prod_{j=0}^{n-1}d_{n+j-1}^{-1}\quad(n=0,1,\ldots)\;.
$$
\end{thmB}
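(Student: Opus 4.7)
The plan is to construct, directly from the functional equation \eqref{GPa}, a candidate monic sequence $(P_n)_{n\geq 0}$ via a Rodrigues-type formula and then characterize precisely when it forms an OPS with respect to $\mathbf{u}$. Starting from $\mathbf{D}(\phi\mathbf{u})=\psi\mathbf{u}$ together with the Leibniz rule $\mathbf{D}(f\mathbf{v})=f'\mathbf{v}+f\mathbf{D}\mathbf{v}$, an easy induction yields $\mathbf{D}(\phi^{n+1}\mathbf{u})=\psi_{n+1}\phi^n\mathbf{u}$ with $\psi_{n+1}:=\psi+n\phi'$, and iterating produces $\mathbf{D}^n(\phi^n\mathbf{u})=q_n\mathbf{u}$ for a polynomial $q_n$ governed by a simple two-step recurrence. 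A direct computation of its leading coefficient gives $\prod_{j=n-1}^{2n-2}d_j$, so under admissibility the normalization $k_n=\prod_{j=0}^{n-1}d_{n+j-1}^{-1}$ makes $P_n:=k_n q_n$ monic of degree $n$, yielding the functional Rodrigues formula. Orthogonality $\langle\mathbf{u},P_n z^k\rangle=0$ for $k<n$ then follows from $n$-fold ``integration by parts'' since $(z^k)^{(n)}=0$.

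Writing $\eta_n:=\langle\mathbf{u},\phi^n\rangle$, the same computation gives $h_n:=\langle\mathbf{u},P_n^2\rangle=k_n(-1)^n n!\,\eta_n$, so $(P_n)_{n\geq 0}$ is an OPS if and only if $\eta_n\neq 0$ for every $n$ (the base case $\eta_0=u_0\neq 0$ comes from $\mathbf{u}\neq\mathbf{0}$ together with the moment recurrence \eqref{A1}, which would otherwise force $\mathbf{u}=\mathbf{0}$). The heart of the argument is the ratio identity
\[
\frac{\eta_{n+1}}{\eta_n}=\frac{d_{2n}}{d_{2n+1}}\,\phi\!\left(-\frac{e_n}{d_{2n}}\right),
\]
which immediately yields the equivalence $\eta_n\neq 0$ for all $n$ $\iff$ $\phi(-e_n/d_{2n})\neq 0$ for all $n$ under admissibility. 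I would establish this identity by observing that $\phi^n\mathbf{u}$ itself satisfies a Geronimus--Pearson equation with the admissible pair $(\phi,\psi_{n+1})$; its moments $u_m^{(n)}:=\langle\phi^n\mathbf{u},z^m\rangle$ therefore obey a three-term recurrence analogous to \eqref{A1}, from which $u_1^{(n)}$ and $u_2^{(n)}$ can be computed in closed form in terms of $\eta_n=u_0^{(n)}$. Substituting into $\eta_{n+1}=au_2^{(n)}+bu_1^{(n)}+cu_0^{(n)}$ and simplifying reduces the identity to an algebraic rearrangement. This step is the main technical obstacle.

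With the ratio identity in hand, both directions of the main equivalence follow via Theorem A: if \eqref{Aregular} holds then $\eta_n\neq 0$ for every $n$, hence $(P_n)_{n\geq 0}$ is an OPS, so $H_n\neq 0$ and $\mathbf{u}$ is $\mathrm{D}$-classical; conversely, $\mathrm{D}$-classicality forces $H_n\neq 0$, hence $\eta_n\neq 0$, and the identity then forces $\phi(-e_n/d_{2n})\neq 0$. For the explicit recurrence coefficients, I would compute the subleading coefficient of $q_n$ by solving the inherited recurrence for the $z^{n-1}$-coefficient; a short induction produces the closed form $\alpha_n=ne_{n-1}/d_{2n-2}$ for the subleading coefficient of the monic $P_n$. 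Then $B_n=\alpha_n-\alpha_{n+1}$, read off from the TTRR \eqref{ATTRRa}, gives the stated expression for $B_n$, while $C_{n+1}=h_{n+1}/h_n$ combined with the closed form for $h_n$ and the key ratio yields the stated formula for $C_{n+1}$.
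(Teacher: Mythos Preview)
Your argument is correct. The Rodrigues identity $\mathbf{D}^n(\phi^n\mathbf{u})=q_n\mathbf{u}$, the leading and subleading coefficient computations, the integration-by-parts orthogonality, and especially the key ratio $\eta_{n+1}/\eta_n=(d_{2n}/d_{2n+1})\,\phi(-e_n/d_{2n})$ all check out; the latter indeed follows from the moment recursion for $\phi^n\mathbf{u}$ exactly as you outline, and it delivers both the regularity equivalence and the closed form for $C_{n+1}$.

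The route, however, is genuinely different from the one the paper takes. The paper does not prove Theorem~B directly: it recovers it as the degenerate case $x(s)=\mathfrak{c}_6$ of Theorem~\ref{main-thm-quadratic}, which in turn is proved by the method of Theorem~\ref{main-Thm1}. That method rests on the hierarchy of functionals $\mathbf{u}^{[k]}$ and polynomials $\phi^{[k]},\psi^{[k]}$; the forward implication uses the regularity of each $\mathbf{u}^{[k]}$ (Lemma~\ref{x-admissible}) and reads off the second condition from $C_1^{[n]}\neq0$, while the converse defines $(P_n)_{n\geq0}$ via the TTRR with the stated $B_n,C_n$, invokes Favard's theorem, and then uses the Rodrigues formula (Theorem~\ref{uk-is-classical}) to identify this sequence as orthogonal with respect to $\mathbf{u}$. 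By contrast, you bypass both the $\mathbf{u}^{[k]}$ machinery and Favard's theorem: you build $P_n$ straight from $\mathbf{D}^n(\phi^n\mathbf{u})$, verify orthogonality by differentiation, and reduce everything to the scalar quantities $\eta_n=\langle\mathbf{u},\phi^n\rangle$. Your proof is more elementary and self-contained for the $\mathrm{D}$-case; the paper's structural approach is what makes the lattice generalization possible, since on a nontrivial lattice $\phi^n\mathbf{u}$ is no longer the correct object and must be replaced by the recursively defined $\mathbf{u}^{[n]}$.
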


Theorem B was stated in \cite{MP1994}.\footnote{The explicit formulas \eqref{ATTRRAb} appeared earlier in Suslov's article \cite{S1989}, where they have been derived by a different method, although the regularity conditions for ${\bf u}$ had not been discussed therein.}
Observe that conditions \eqref{Aregular} mean that  $(\phi,\psi)$ is an admissible pair and $\psi+n\phi'\nmid\phi$ for each $n=0,1,\ldots$.
It is well known that if there exist ${\bf u}\in\mathcal{P}^*$ and nonzero polynomials $\phi\in\mathcal{P}_2$ and $\psi\in\mathcal{P}_1$ such that \eqref{GPa} holds, then the regularity of ${\bf u}$ implies that $(\phi,\psi)$ is an admissible pair (see e.g. \cite{M1991,MP1994}). Therefore, the above definition of $\mathrm{D}-$classical functional is equivalent to the following statement: {\it ${\bf u}\in\mathcal{P}^*\backslash\{{\bf 0}\}$ is $\mathrm{D}-$classical if and only if it is regular and there exist nonzero polynomials $\phi\in\mathcal{P}_2$ and $\psi\in\mathcal{P}_1$ such that ${\bf u}$ satisfies \eqref{GPa}}. This statement is indeed the definition of $\mathrm{D}-$classical functional adopted nowadays within the algebraic theory of orthogonal polynomials, developed by Maroni \cite{M1985,M1988,M1991} (see also \cite{P2017}).  

Recently, an analogue of Theorem B in the framework of OPS with respect to the discrete Hahn operator has been proved in \cite{RKDP2020}.
In order to give here the statement of this analogue, we need to recall several definitions. Given complex numbers $q$ and $\omega$ subject to the conditions 
\begin{equation}\label{q-notexp}
|q-1|+|\omega|\neq0\;,\quad
q\not\in\big\{ 0,{\rm e}^{2ij\pi/n}\;|\;1\leq j\leq n-1\;;\;\;n=2,3,\ldots\big\},
\end{equation}
the (ordinary) Hahn operator $D_{q,\omega}:\mathcal{P}\to\mathcal{P}$, studied by W. Hahn in \cite{H1949}, is
$$
D_{q,\omega}f(x):=\frac{f(qx+\omega)-f(x)}{(q-1)x+\omega}\quad (f\in\mathcal{P})\;.
$$
$D_{q,\omega}$ induces an operator on the dual space,
${\bf D}_{q,\omega}:\mathcal{P}^*\to\mathcal{P}^*$, given by (see \cite{F1998})
$$
\langle{\bf D}_{q,\omega}{\bf u},f\rangle:=
-q^{-1}\langle{\bf u},D_{q,\omega}^*f\rangle\quad({\bf u}\in\mathcal{P}^*\;,\;f\in\mathcal{P})\;,
$$
where $D_{q,\omega}^*:=D_{1/q,-\omega/q}$.
These definitions are on the basis of the following notion of classical OPS:  {\it ${\bf u}\in\mathcal{P}^*$ is $(q,\omega)-$classical if it is regular and there exist nonzero polynomials $\phi\in\mathcal{P}_2$ and $\psi\in\mathcal{P}_1$, such that} 
\begin{equation}\label{GPaqw}
{\bf D}_{q,\omega}(\phi{\bf u})=\psi{\bf u}\;.
\end{equation}
Define also operators
$L_{q,\omega}:\mathcal{P}\to\mathcal{P}$ and ${\bf L}_{q,\omega}:\mathcal{P}^*\to\mathcal{P}^*$ by (see \cite{F1998})
$$
L_{q,\omega}f(x):=f(qx+\omega)\;,\quad
\langle {\bf L}_{q,\omega}{\bf u},f\rangle:=q^{-1}\langle {\bf u},L_{q,\omega}^*f\rangle
\quad \big(f\in\mathcal{P}\;,\;{\bf u}\in\mathcal{P}^*\big)\;,
$$
where $L_{q,\omega}^*:=L_{1/q,-\omega/q}$. Finally, recall the definition of the $q-$bracket:
$$
[\alpha]_q:=
\left\{\begin{array}{cl}
\displaystyle\frac{q^\alpha-1}{q-1}\;,&\mbox{\rm if}\quad q\neq1 \\ [0.75em]
\alpha\;,&\mbox{\rm if}\quad q=1
\end{array}
\right.\quad(\alpha,q\in\mathbb{C})\;.
$$
The $(q,\omega)-$analogue of Theorem B, stated in \cite[Theorem 1.2]{RKDP2020}, reads as follows: 

\begin{thmC}
Fix $q,\omega\in\mathbb{C}$ fulfilling $(\ref{q-notexp})$. 
${\bf u}\in\mathcal{P}^*\backslash\{\bf 0\}$ is $(q,\omega)-$classical if and only if there exist polynomials 
$\phi(z):=az^2+bz+c$ and $\psi(z):=dz+e$ such that ${\bf u}$ satisfies the functional equation \eqref{GPaqw} and the conditions
$$
d_n\neq0\,,\quad \phi\Big(-\frac{e_n}{d_{2n}}\Big)\neq0\quad (n=0,1,2,\ldots)
$$
hold, where $d_n\equiv d_n(q):=dq^n+a[n]_{q}$ and $e_n\equiv e_n(q,\omega):=eq^n+(\omega d_n+b)[n]_q$. 
Under these conditions, the monic OPS $(P_n)_{n\geq0}\equiv(P_n(\cdot;q,\omega))_{n\geq0}$
with respect to ${\bf u}$ satisfies the TTRR \eqref{ATTRRa}, where
$$
B_n=\omega[n]_q+\frac{[n]_qe_{n-1}}{d_{2n-2}}-\frac{[n+1]_qe_{n}}{d_{2n}}\, ,\quad
C_{n+1}:=-\frac{q^{n}[n+1]_qd_{n-1}}{d_{2n-1}d_{2n+1}}\phi\Big(-\frac{e_n}{d_{2n}}\Big)
$$
$(n=0,1,\ldots)$. 
In addition, the Rodrigues formula
$$
P_n{\bf u}=k_n\,{\bf D}_{1/q,-\omega/q}^n\Big(\Phi(\cdot;n){\bf L}_{q,\omega}^n{\bf u}\Big)
\quad(n=0,1,\ldots)
$$
holds in $\mathcal{P}^*$, where
$$
k_n:=q^{n(n-3)/2}\prod_{j=0}^{n-1}d_{n+j-1}^{-1}\;,\quad
\Phi(x;n):=\prod_{j=1}^n\phi\big(q^jx+\omega[j]_q\big)\;.
$$
\end{thmC}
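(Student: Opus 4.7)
My plan is to adapt the proof strategy used for Theorem B to the $(q,\omega)$ setting, proceeding in three stages: first translating \eqref{GPaqw} into an explicit moment recurrence, then deriving the TTRR coefficients, and finally establishing the Rodrigues formula by induction. The starting point is to test \eqref{GPaqw} against $z^n$, which yields a three-term recurrence for the moments $u_n=\langle{\bf u},z^n\rangle$ in which the coefficient of $u_{n+1}$ is (up to a nonzero factor) exactly $d_n$. This shows that the admissibility condition $d_n\neq 0$ for all $n\geq 0$ is precisely what is needed for the functional equation to determine ${\bf u}$ uniquely from $u_0$, up to a nonzero scalar.

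For necessity, suppose ${\bf u}$ is $(q,\omega)$-classical with monic OPS $(P_n)_{n\geq 0}$. I would invoke the $(q,\omega)$-analogue of Hahn's characterization: the normalized Hahn-derivatives $P_n^{[1]}:=[n+1]_q^{-1}D_{q,\omega}P_{n+1}$ form an OPS, with associated functional obtained from ${\bf u}$ by $\phi$-multiplication and an ${\bf L}_{q,\omega}$-shift. Expanding $\phi\,D_{q,\omega}P_{n+1}$ in the basis $(P_k)$ through the TTRR \eqref{ATTRRa} and comparing coefficients on both sides of the functional equation then pins down $B_n$ and $C_{n+1}$ in closed form. The non-degeneracy requirement $C_{n+1}\neq 0$, equivalent to regularity of ${\bf u}$, translates precisely into $\phi(-e_n/d_{2n})\neq 0$ for all $n\geq 0$.

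Conversely, given $(\phi,\psi)$ satisfying the two stated conditions, I would \emph{define} $(B_n,C_{n+1})$ by the formulas in the theorem, define $(P_n)$ via the TTRR \eqref{ATTRRa}, and invoke Favard's theorem (whose hypotheses are met since $C_{n+1}\neq 0$) to produce a regular functional $\widetilde{\bf u}$ with OPS $(P_n)$. It then remains to verify that $\widetilde{\bf u}$ satisfies \eqref{GPaqw}; by the uniqueness result of the first step, this reduces to checking that the moment recurrence is satisfied, which is a finite computation using only the recurrence coefficients. The Rodrigues formula is then established by induction on $n$: the base case is trivial, and the inductive step combines a recursive identity of the form $\Phi(\cdot;n)=L_{q,\omega}\bigl(\phi\,\Phi(\cdot;n-1)\bigr)$ with the $(q,\omega)$-Leibniz rule for ${\bf D}_{q,\omega}$ acting on products, using the functional equation to move one factor of ${\bf D}_{q,\omega}$ past the outermost $\phi$.

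I expect the main obstacle to lie in the bookkeeping of $q$-powers and shift operators in the Rodrigues step, which is intrinsically more delicate than in the $\mathrm{D}$-classical setting of Theorem B. The precise commutation identities relating ${\bf D}_{q,\omega}$, ${\bf L}_{q,\omega}$, and polynomial multiplication involve several non-commuting factors, and verifying that the emerging constant collapses to $k_n=q^{n(n-3)/2}\prod_{j=0}^{n-1}d_{n+j-1}^{-1}$ requires careful accounting of geometric products of the form $q^{\binom{n}{2}}$ together with the explicit form of the coefficients $d_n$, $e_n$ under the action of ${\bf L}_{q,\omega}^n$. Once this combinatorial bookkeeping is in place, the algebraic derivation of the TTRR coefficients in the $(q,\omega)$-setting should proceed along the same lines as in Theorem B.
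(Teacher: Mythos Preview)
The paper does not actually prove Theorem~C: it is quoted as \cite[Theorem~1.2]{RKDP2020} and serves only as motivation. So there is no ``paper's own proof'' of Theorem~C to compare against directly. What the paper \emph{does} prove is the lattice analogue, Theorem~\ref{main-Thm1}, and its proof architecture differs from yours in one structurally important way that is worth flagging.

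In the paper's scheme, the Rodrigues formula is established \emph{first} (Theorem~\ref{uk-is-classical}), under the sole hypothesis of $x$-admissibility and without assuming regularity of ${\bf u}$. Only afterwards is regularity proved, and the sufficiency direction uses Rodrigues as its main engine: one defines $(P_n)$ by the TTRR, notes $P_n=k_nR_n$, and then computes $\langle{\bf u},P_n\rangle=k_n\langle R_n{\bf u},1\rangle=k_n\langle\mathbf{D}_x^n{\bf u}^{[n]},1\rangle=0$ directly. This bypasses any need to check that the Favard functional $\widetilde{\bf u}$ satisfies the Pearson equation. Your plan reverses the order --- regularity first, Rodrigues last --- and in your sufficiency step you propose to verify that $\widetilde{\bf u}$ satisfies \eqref{GPaqw} by checking the moment recurrence. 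That is not a finite computation: the moments of $\widetilde{\bf u}$ are determined by the full tower of $B_n,C_n$, and showing they obey the three-term moment recurrence for every $n$ is essentially equivalent to what you are trying to prove. The paper's ordering avoids this circularity entirely.

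Your necessity argument is also organized differently. Rather than expanding $\phi\,D_{q,\omega}P_{n+1}$ in the $(P_k)$ basis, the paper (in the lattice setting) observes that ${\bf u}^{[n]}$ satisfies a Pearson equation of the same shape with shifted data $(\phi^{[n]},\psi^{[n]})$, computes $C_1^{[n]}$ directly from the first two moments of ${\bf u}^{[n]}$, and reads off the non-vanishing condition from $C_1^{[n]}\neq0$. This is shorter and gives the closed form for $C_{n+1}$ immediately, without coefficient-matching in a basis expansion.
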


At this stage, a natural question arises, asking for analogues of theorems B and C for OPS on lattices
in the sense of Nikiforov, Suslov, and Uvarov \cite{NSU1991} 
(see also \cite{ARS1995,I2005,KLS2010,R2014}).
The aim of this work is answering to this question.
The structure of the paper is the following.
In Section \ref{remarksNUL} we review some basic definitions and results focusing on the theory of OPS on lattices.
Section \ref{SecRodrigues} contains a functional Rodrigues formula for classical OPS on lattices.
In Section \ref{S-main} we state our main results, giving the analogues of theorems B and C for OPS on lattices 
(for linear, $q-$linear, quadratic, and $q-$quadratic lattices).
Finally, as a straightforward application of the main results, in Section \ref{secAW} we revisit the Racah polynomials and the Askey-Wilson polynomials,
computing their recurrence coefficients directly from the functional equation fulfilled by the associated regular functional.

\section{Preliminary results on lattices}\label{remarksNUL}

In this section we review the definition of lattice. In addition, we derive a Leibniz formula (on lattices) for the left multiplication of a
functional by a polynomial, as well as some preliminary results needed.

\subsection{Definitions and basic properties}

A lattice is a mapping $x(s)$ given by
\begin{equation}
\label{xs-def}
x(s):=\left\{
\begin{array}{ccl}
\mathfrak{c}_1 q^{-s} +\mathfrak{c}_2 q^s +\mathfrak{c}_3 & {\rm if} &  q\neq1 \;,\\ [0.75em]
\mathfrak{c}_4 s^2 + \mathfrak{c}_5 s +\mathfrak{c}_6 & {\rm if} &  q =1
\end{array}
\right.
\end{equation}
($s\in\mathbb{C}$), where $q>0$ (fixed) and $\mathfrak{c}_j$ ($1\leq j\leq6$) are (complex) constants,
that may depend on $q$, such that $(\mathfrak{c}_1,\mathfrak{c}_2)\neq(0,0)$ if $q\neq1$,
and $(\mathfrak{c}_4,\mathfrak{c}_5,\mathfrak{c}_6)\neq(0,0,0)$ if $q=1$.
In the case $q=1$, the lattice is called quadratic if $\mathfrak{c}_4\neq0$,
and linear if $\mathfrak{c}_4=0$;
and in the case $q\neq1$, it is called $q-$quadratic if $\mathfrak{c}_1\mathfrak{c}_2\neq0$,
and $q-$linear if $\mathfrak{c}_1\mathfrak{c}_2=0$ (cf. \cite{ARS1995}).
Notice that
$$
\frac{x\big(s+\frac12\big)+x\big(s-\frac12\big)}{2}=\alpha x(s)+\beta\;,
$$
where $\alpha$ and $\beta$ are given by
\begin{equation}\label{alpha-beta}
\alpha:=\frac{q^{1/2}+q^{-1/2}}{2}\;,\quad
\beta:=\left\{
\begin{array}{ccl}
(1-\alpha)\mathfrak{c}_3 & {\rm if} &  q\neq1 \;,\\ [0.75em]
\mathfrak{c}_4/4 & {\rm if} &  q =1\;.
\end{array}
\right.
\end{equation}
The lattice $x(s)$ fulfills (cf. \cite{ARS1995}): 
\begin{align*}
\frac{x(s+n)+x(s)}{2}&=\alpha_n x_n (s) +\beta_n \,, \\
x(s+n)-x(s)&=\gamma_n \nabla x_{n+1} (s)
\end{align*}
($n=0,1,\ldots$), where $x_{\mu}(s):=x\big(s+\mbox{$\frac\mu2$}\big)$, $\nabla f(s):=f(s)-f(s-1)$,
and $(\alpha_n)_{n\geq0}$, $(\beta_n)_{n\geq0}$, and $(\gamma_n)_{n\geq0}$ are sequences of numbers
generated by the following system of difference equations
\begin{align}
&\alpha_0 =1\;,\quad \alpha_1=\alpha\;,\quad\alpha_{n+1} -2\alpha\alpha_n +\alpha_{n-1} =0 \label{1.2}\\
&\beta_0 =0\;,\quad \beta_1 =\beta\;,\quad\beta_{n+1} -2\beta_n + \beta_{n-1} =2\beta\alpha_{n} \label{1.2b} \\ 
&\gamma_0 =0\;,\quad \gamma_1 =1\;,\quad \gamma_{n+1} -\gamma_{n-1} =2\alpha_n \label{1.2c}
\end{align}
($n=1,2,\ldots$). The explicit solutions of these difference equations are
\begin{align}
& \alpha_n = \frac{q^{n/2} +q^{-n/2}}{2}\,, \label{alpha-n} \\
& \beta_n = \displaystyle\left\{
\begin{array}{ccl}
\displaystyle\beta\,\left(\frac{q^{n/4}-q^{-n/4}}{q^{1/4}-q^{-1/4}}\right)^2 & \mbox{\rm if} & q\neq1 \\ [1em]
\beta\,n^2 & \mbox{\rm if} & q=1\,,
\end{array}
\right. \label{beta-n} \\
& \gamma_n = \displaystyle\left\{
\begin{array}{ccl}
\displaystyle\frac{q^{n/2}-q^{-n/2}}{q^{1/2}-q^{-1/2}} & \mbox{\rm if} & q\neq1 \\ [1em]
n & \mbox{\rm if} & q=1\;.
\end{array}
\right. \label{gamma-n}
\end{align}
These formulas may be easily checked (alternatively, see \cite{ARS1995}).
We point out the following relations:
\begin{align}
&\gamma_{n+1}-2\alpha\gamma_n +\gamma_{n-1} =0\;, \label{gamma-n-bis} \\
&\alpha_{n}+\gamma_{n-1}=\alpha\gamma_{n}\;, \label{gamma-n-bis1} \\
&(2\alpha^2-1)\alpha_{n}+(\alpha^2-1)\gamma_{n-1}=\alpha\alpha_{n+1} \label{gamma-n-bis2}\;, \\
&\gamma_{2n}=2\alpha_{n}\gamma_{n} \label{form-1}\;, \\
&\alpha_{n}^2+(\alpha^2-1)\gamma_{n}^2=\alpha_{2n}=2\alpha_n^2-1 \label{form-2}\;, \\
&\alpha_{n-1}-\alpha\alpha_n=(1-\alpha^2)\gamma_{n} \label{form-3}\;,\\
&\alpha +\alpha_n\gamma_n=\alpha_{n-1}\gamma_{n+1} \label{form-4}\;, \\
&1 +\alpha_{n+1}\gamma_n=\alpha_{n}\gamma_{n+1} \label{form-5}
\end{align}
($n=0,1,\ldots$), with the conventions $\alpha_{-1}:=\alpha$ and $\gamma_{-1}:=-1$,
consistently with \eqref{alpha-n} and \eqref{gamma-n}.

\begin{definition}\label{def-DxSxNUL} 
Let $x(s)$ be a lattice given by \eqref{xs-def}.  
The $x-$derivative operator on $\mathcal{P}$, $\mathrm{D}_x$, 
and the $x-$average operator on $\mathcal{P}$, $\mathrm{S}_x$, 
are the operators on $\mathcal{P}$ defined for each $f\in\mathcal{P}$ so that $\deg(\mathrm{D}_x f)=\deg f-1$, $\deg(\mathrm{S}_x f)=\deg f$, and
\begin{align}
\mathrm{D}_x f(x(s))&= \frac{f\big(x(s+\frac{1}{2})\big)-f\big(x(s-\frac{1}{2})\big)}{x(s+\frac{1}{2})-x(s-\frac{1}{2})}\;, \label{AWxsG}  \\
\mathrm{S}_x f(x(s))&= \frac{f\left(x\big(s+\frac{1}{2}\big)\right) + f\left(x\big(s-\frac{1}{2}\big)\right)}{2}\,.
\label{AverOperG}
\end{align}
\end{definition}

The relations \eqref{AWxsG} and \eqref{AverOperG} appear in \cite[(3.2.4)-(3.2.5)]{NSU1991} up to a shift in the variable $s$. 
The operators  $\mathrm{D}_x$ and $\mathrm{S}_x$ on $\mathcal{P}$ induce two operators on the dual space $\mathcal{P}^*$, namely
$\mathbf{D}_x:\mathcal{P}^*\to\mathcal{P}^*$ and $\mathbf{S}_x:\mathcal{P}^*\to\mathcal{P}^*$, via the following definition (cf. \cite{FK-NM2011}):

\begin{definition} 
Let $x(s)$ be a lattice given by \eqref{xs-def}. 
For each ${\bf u}\in\mathcal{P}^*$, the functionals $\mathbf{D}_x{\bf u}\in\mathcal{P}^*$
and $\mathbf{S}_x{\bf u}\in\mathcal{P}^*$ are defined by
\begin{equation}\label{PNUL-def-Dxu-Sxu}
\langle \mathbf{D}_x{\bf u},f\rangle:=-\langle {\bf u},\mathrm{D}_x f\rangle\; ,\quad
\langle \mathbf{S}_x{\bf u},f\rangle:=\langle {\bf u},\mathrm{S}_x f\rangle\quad (f\in\mathcal{P})\,.
\end{equation}
We call $\mathbf{D}_x{\bf u}$ the $x-$derivative of ${\bf u}$ and $\mathbf{S}_x{\bf u}$ the $x-$average of ${\bf u}$.
\end{definition}
Hereafter, $z:=x(s)$ being a lattice given by (\ref{xs-def}), we consider two fundamental polynomials, 
$\texttt{U}_1$ and $\texttt{U}_2$, introduced in \cite{FK-NM2011}, defined by
\begin{align}
\texttt{U}_1(z) &:=(\alpha^2 -1)z+\beta(\alpha +1)\;,\quad  \label{U1x} \\
\texttt{U}_2(z) &:=(\alpha^2 -1)z^2+2\beta(\alpha +1)z+\delta\;, \label{U2x}
\end{align}
where $\delta\equiv\delta_x$ is a constant with respect to the lattice, given by\footnote{The constant $\delta_x$ appears in \cite{FK-NM2011} without an explicit expression; a full expression, different from the ones provided here, is given in \cite{MFN-S2017}.}
\begin{equation} \label{1.5}
\delta := \left(\frac{x(0)+x(1)-2\beta(\alpha+1)}{2\alpha}\right)^2-x(0)x(1)\;.
\end{equation}
A straightforward computation shows that
\begin{equation} \label{delta-simples}
\delta =\left\{
\begin{array}{lcl}
(\alpha^2-1)\big(\mathfrak{c}_3^2-4\mathfrak{c}_1\mathfrak{c}_2\big) & \mbox{\rm if} & q\neq1 \;, \\ [0.5em]
\frac14\mathfrak{c}_5^2-\mathfrak{c}_4\mathfrak{c}_6& \mbox{\rm if} & q=1 
\end{array}
\right.
\end{equation}
and
\begin{equation} \label{U1-simples-bis}
\texttt{U}_1 (z) =\left\{
\begin{array}{lcl}
(\alpha^2-1)\big(z-\mathfrak{c}_3\big) & \mbox{\rm if} & q\neq1 \;, \\ [0.5em]
\frac{1 }{2}\mathfrak{c}_4 & \mbox{\rm if} & q=1 \,.
\end{array}
\right.
\end{equation}
Hence
\begin{equation} \label{U2-simples-bis}
\texttt{U}_2(z) =\left\{
\begin{array}{lcl}
(\alpha^2-1)\big((z-\mathfrak{c}_3)^2-4\mathfrak{c}_1\mathfrak{c}_2\big) & \mbox{\rm if} & q\neq1 \;, \\ [0.5em]
\mathfrak{c}_4(z-\mathfrak{c}_6)+\frac14\mathfrak{c}_5^2& \mbox{\rm if} & q=1 \;.
\end{array}
\right.
\end{equation}
Finally, we recall the following useful relations which can be easily proved (see \cite{FK-NM2011}):
\begin{align}
&\mathrm{D}_x\texttt{U}_1=\alpha^2-1 \;,\quad
& \mathrm{S}_x\texttt{U}_1&=\alpha\texttt{U}_1 \;,  \label{DxSxU1} \\
&\mathrm{D}_x\texttt{U}_2=2\alpha\texttt{U}_1 \;,\quad
& \mathrm{S}_x\texttt{U}_2&=\alpha^2\texttt{U}_2+\texttt{U}_1^2 \;. \label{DxSxU2}
\end{align}

\subsection{Properties of the $x-$derivative and $x-$average operators}
We start by pointing out some useful properties. 

\begin{lemma}\label{propertyDxSx}
Let $f,g\in\mathcal{P}$ and ${\bf u}\in\mathcal{P}^*$. Then the following properties hold:\rm
\begin{align}
 &\mathrm{D}_x \big(fg\big)= \big(\mathrm{D}_x f\big)\big(\mathrm{S}_x g\big)+\big(\mathrm{S}_x f\big)\big(\mathrm{D}_x g\big)\;, \label{1.3} \\
 &\mathrm{S}_x \big( fg\big)= \big(\mathrm{D}_x f\big) \big(\mathrm{D}_x g\big)\texttt{U}_2 +\big(\mathrm{S}_x f\big) \big(\mathrm{S}_x g\big)\;, \label{1.4} \\
&\mathrm{S}_x\mathrm{D}_x f= \alpha\mathrm{D}_x\mathrm{S}_x f-\mathrm{D}_x\big(\texttt{U}_1\mathrm{D}_x f\big) \;, \label{1.4aa} \\
&\mathrm{S}_x^2 f = \alpha^{-1}\mathrm{S}_x \big(\texttt{U}_1 \mathrm{D}_x f  \big) +\alpha^{-1}\texttt{U}_2 \mathrm{D}_x^2 f +f\;, \label{c} \\
 &f\mathrm{S}_xg
=\mathrm{S}_x\Big(\big(\mathrm{S}_xf-\alpha^{-1}\texttt{U}_1\,\mathrm{D}_xf\big)g\Big)
-\alpha^{-1}\texttt{U}_2\mathrm{D}_x\big(g\mathrm{D}_xf\big)\;, \label{PropNUL2} \\
 &f\mathrm{D}_x g =\mathrm{D}_x\Big(\big(\mathrm{S}_xf-\alpha^{-1}\texttt{U}_1\,\mathrm{D}_xf\big)g\Big)
-\alpha^{-1}\mathrm{S}_x\big(g\mathrm{D}_xf\big)\;, \label{PropNUL1} \\
&\mathbf{D}_x(f{\bf u})
=\left(\mathrm{S}_x f -\alpha^{-1}\texttt{U}_1\mathrm{D}_x f\right)\mathbf{D}_x {\bf u} +
\alpha^{-1} \big(\mathrm{D}_x f\big) \mathbf{S}_x {\bf u}\;, \label{a} \\
&\mathbf{S}_x (f {\bf u})
= \left(\mathrm{S}_x f -\alpha^{-1}\texttt{U}_1\mathrm{D}_x f\right)\mathbf{S}_x{\bf u}+\alpha^{-1}\big(\mathrm{D}_xf\big) \mathbf{D}_x (\texttt{U}_2 {\bf u})\;, \label{b} \\
&\mathbf{S}_x (f {\bf u}) 
= \big(\alpha \texttt{U}_2 -\alpha^{-1}\texttt{U}_1^2  \big)(\mathrm{D}_x f)\mathbf{D}_x {\bf u} +
\big(\mathrm{S}_x f +\alpha^{-1}\texttt{U}_1\mathrm{D}_x f  \big)\mathbf{S}_x {\bf u}\;,  \label{1} \\
&\mathbf{D}_x^2 \big(\texttt{U}_2 {\bf u}\big)
=\alpha \mathbf{S}_x^2 {\bf u} +\mathbf{D}_x \big(\texttt{U}_1 \mathbf{S}_x {\bf u}  \big) -\alpha {\bf u}\;,  \label{2aa} \\
&\mathbf{D}_x^2 \big(\texttt{U}_2 {\bf u}\big)
= (2\alpha-\alpha^{-1}) \mathbf{S}_x^2 {\bf u}
+\alpha^{-1}\texttt{U}_1 \mathbf{D}_x \mathbf{S}_x {\bf u} -\alpha {\bf u}\;, \label{2bb} \\
&\mathbf{D}_x \mathbf{S}_x {\bf u} 
= \alpha\mathbf{S}_x \mathbf{D}_x {\bf u}+\mathbf{D}_x\big(\texttt{U}_1\mathbf{D}_x{\bf u}\big)\;. \label{3aa}
\end{align}
\end{lemma}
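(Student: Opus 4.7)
The overall strategy is to first establish the purely polynomial identities \eqref{1.3}--\eqref{PropNUL1}, and then obtain the functional identities \eqref{a}--\eqref{3aa} by pairing both sides with an arbitrary test polynomial $h\in\mathcal{P}$ and invoking the defining relations \eqref{PNUL-def-Dxu-Sxu}.

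For the polynomial identities, abbreviate $f_\pm:=f\bigl(x(s\pm\tfrac12)\bigr)$ and introduce the increments $A:=x(s+1)-x(s)$, $B:=x(s)-x(s-1)$, $C:=x(s+\tfrac12)-x(s-\tfrac12)$. Then \eqref{1.3} and \eqref{1.4} follow at once from the elementary factorizations $f_+g_+-f_-g_-=\tfrac12\bigl[(f_+-f_-)(g_++g_-)+(f_++f_-)(g_+-g_-)\bigr]$ and $f_+g_++f_-g_-=\tfrac12\bigl[(f_+-f_-)(g_+-g_-)+(f_++f_-)(g_++g_-)\bigr]$, together with the crucial identification $(C/2)^2=\texttt{U}_2(x(s))$, which is read off directly from \eqref{xs-def} and \eqref{U2-simples-bis}. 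For \eqref{1.4aa} and \eqref{c}, one writes each term as a linear combination of the divided differences $(f(x(s+1))-f(x(s)))/A$ and $(f(x(s))-f(x(s-1)))/B$; the verification then reduces to the two arithmetic identities $A+B=2\alpha C$ and $A-B=4\texttt{U}_1(x(s))$, both of which are immediate consequences of \eqref{xs-def}, \eqref{alpha-beta}, and \eqref{U1-simples-bis} (with the branches $q\neq 1$ and $q=1$ handled separately). Finally, the inverse product rules \eqref{PropNUL1} and \eqref{PropNUL2} are proved by expanding their right-hand sides via \eqref{1.3} and \eqref{1.4} and collecting the coefficients of $\mathrm{S}_xg$ and $\mathrm{D}_xg$: those coefficients reduce to $f$ and $0$, respectively, precisely by virtue of \eqref{c} and \eqref{1.4aa}.

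The functional identities follow by duality. Identity \eqref{a} is obtained from $\langle\mathbf{D}_x(f\mathbf{u}),h\rangle=-\langle\mathbf{u},f\mathrm{D}_xh\rangle$ after rewriting $f\mathrm{D}_xh$ using \eqref{PropNUL1}, and \eqref{b} follows analogously from $\langle\mathbf{S}_x(f\mathbf{u}),h\rangle=\langle\mathbf{u},f\mathrm{S}_xh\rangle$ and \eqref{PropNUL2}. Identity \eqref{1} is extracted from \eqref{b} by substituting $\mathbf{D}_x(\texttt{U}_2\mathbf{u})$ according to \eqref{a} with $f=\texttt{U}_2$, together with \eqref{DxSxU2}. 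Identity \eqref{2aa} is the direct dualization of \eqref{c} via $\langle\mathbf{D}_x^2(\texttt{U}_2\mathbf{u}),h\rangle=\langle\mathbf{u},\texttt{U}_2\mathrm{D}_x^2h\rangle$; then \eqref{2bb} follows from \eqref{2aa} by expanding $\mathbf{D}_x(\texttt{U}_1\mathbf{S}_x\mathbf{u})$ through \eqref{a} with $f=\texttt{U}_1$ and \eqref{DxSxU1}. Finally, \eqref{3aa} is the dual of \eqref{1.4aa} by the same pairing mechanism.

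The main obstacle is concentrated in \eqref{1.4aa} and \eqref{c}: these are the identities that genuinely encode the lattice structure through the fundamental polynomials $\texttt{U}_1$ and $\texttt{U}_2$, rather than being formal product-rule consequences. Once the two arithmetic relations $A+B=2\alpha C$ and $A-B=4\texttt{U}_1(x(s))$ are secured, the rest of the lemma cascades mechanically.
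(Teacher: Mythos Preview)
Your plan is correct and the verifications go through as you describe. The only noteworthy difference from the paper is in how the first block of identities is handled: for \eqref{1.3}--\eqref{b} the paper gives no argument at all, simply pointing to \cite[Propositions~5--7]{MFN-S2017}, whereas you supply a self-contained lattice computation based on the increments $A,B,C$ and the two key relations $A+B=2\alpha C$ and $A-B=4\texttt{U}_1(x(s))$. This is a genuine (if modest) added value, since it makes the proof independent of that reference and pinpoints exactly where the lattice structure enters. For the remaining four identities \eqref{1}--\eqref{3aa} your argument coincides with the paper's: \eqref{1} from \eqref{b} after inserting $\mathbf{D}_x(\texttt{U}_2\mathbf{u})$ via \eqref{a}; \eqref{2aa} by pairing with a test polynomial and applying \eqref{c}; \eqref{2bb} from \eqref{2aa} after expanding $\mathbf{D}_x(\texttt{U}_1\mathbf{S}_x\mathbf{u})$ via \eqref{a}; and \eqref{3aa} as the dual of \eqref{1.4aa}.
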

\begin{proof}
The reader may encounter properties (\ref{1.3})--(\ref{b}) in 
\cite[Propositions 5--7]{MFN-S2017}.
To prove \eqref{1}, set $f=\texttt{U}_2$ in (\ref{a}) and then use \eqref{DxSxU2} to obtain
$$
\mathbf{D}_x (\texttt{U}_2 {\bf u})
=\big(\alpha^2\texttt{U}_2-\texttt{U}_1^2\big)\mathbf{D}_x{\bf u}+2\texttt{U}_1\mathbf{S}_x{\bf u}\;.
$$
Replacing this expression in the right-hand side of \eqref{b} we obtain \eqref{1}.
Next, taking arbitrarily $f\in\mathcal{P}$, we have
\begin{align*}
\langle\mathbf{D}_x^2\big(\texttt{U}_2{\bf u}\big),f\rangle &=\langle{\bf u},\texttt{U}_2\mathrm{D}_x^2f\rangle
 =\langle{\bf u},\alpha \mathrm{S}_x^2f-\mathrm{S}_x(\texttt{U}_1\mathrm{D}_xf)-\alpha f\rangle \\
 &=\langle \alpha \mathbf{S}_x^2 {\bf u} +\mathbf{D}_x \big(\texttt{U}_1 \mathbf{S}_x {\bf u}  \big) -\alpha {\bf u},f \rangle\;,
\end{align*}
where the second equality holds by (\ref{c}). This proves (\ref{2aa}).
Setting $f=\texttt{U}_1$ in (\ref{a}) and replacing therein ${\bf u}$ by $\mathbf{S}_x{\bf u}$,
and taking into account (\ref{DxSxU1}),
we deduce
$$
\mathbf{D}_x\big(\texttt{U}_1\mathbf{S}_x{\bf u}\big)
=\alpha^{-1}\texttt{U}_1\mathbf{D}_x\mathbf{S}_x {\bf u} +(\alpha-\alpha^{-1})\mathbf{S}_x^2 {\bf u}\;.
$$
Substituting this into the right-hand side of (\ref{2aa}) we obtain (\ref{2bb}).
Finally, (\ref{3aa}) follows easily from (\ref{PNUL-def-Dxu-Sxu}) and (\ref{1.4aa}).
\end{proof}
\begin{proposition}
For the lattice $\;x(s)=\mathfrak{c}_1q^{-s}+\mathfrak{c}_2q^s+\mathfrak{c}_3$, the following holds:
\begin{align}
\mathrm{D}_x z^n &=\gamma_n z^{n-1}+u_nz^{n-2}+v_nz^{n-3}+\cdots\;, \label{Dx-xn}\\
\mathrm{S}_x z^n &=\alpha_n z^n+\widehat{u}_nz^{n-1}+\widehat{v}_nz^{n-2}+\cdots \label{Sx-xn}
\end{align}
$(n=0,1,\ldots)$, where $\alpha_n$ and $\gamma_n$ are given by \eqref{alpha-n} and \eqref{gamma-n}, and
\begin{align}
u_n &:=\big(n\gamma_{n-1}-(n-1)\gamma_n\big)\mathfrak{c}_3\;, \label{unD} \\
v_n &:=\big(n\gamma_{n-2}-(n-2)\gamma_n\big)\mathfrak{c}_1\mathfrak{c}_2 \label{vnD} \\
&\qquad+\mbox{$\frac12$}\big(n(n-1)\gamma_{n-2}-2n(n-2)\gamma_{n-1}+(n-1)(n-2)\gamma_{n}\big)\mathfrak{c}_3^2 \;, \nonumber\\
\widehat{u}_n &:=n(\alpha_{n-1}-\alpha_n)\mathfrak{c}_3\;, \label{unS} \\
\widehat{v}_n &:=n(\alpha_{n-2}-\alpha_n)\mathfrak{c}_1\mathfrak{c}_2
+n(n-1)(\alpha-1)\alpha_{n-1}\mathfrak{c}_3^2\;. \label{vnS}
\end{align}
\end{proposition}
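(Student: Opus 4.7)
The plan is to induct on $n$, pivoting the Leibniz formulas \eqref{1.3} and \eqref{1.4} on the split $z^{n+1}=z\cdot z^n$. Since Definition \ref{def-DxSxNUL} together with the identity $\frac12\bigl(x(s+\tfrac12)+x(s-\tfrac12)\bigr)=\alpha z+(1-\alpha)\mathfrak{c}_3$ (via \eqref{alpha-beta}) yields $\mathrm{D}_x z=1$ and $\mathrm{S}_x z=\alpha z+\beta$ with $\beta=(1-\alpha)\mathfrak{c}_3$, those two rules specialize to the clean recurrences
\begin{align*}
\mathrm{D}_x z^{n+1} &=(\alpha z+\beta)\,\mathrm{D}_x z^n+\mathrm{S}_x z^n,\\
\mathrm{S}_x z^{n+1} &=\texttt{U}_2(z)\,\mathrm{D}_x z^n+(\alpha z+\beta)\,\mathrm{S}_x z^n.
\end{align*}
The base cases $\mathrm{D}_x z^0=0$ and $\mathrm{S}_x z^0=1$ match \eqref{Dx-xn}--\eqref{Sx-xn} under the conventions $\alpha_{-1}=\alpha$, $\gamma_{-1}=-1$, and the explicit values at $n=1$ serve as a sanity check.

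Assuming \eqref{Dx-xn}--\eqref{Sx-xn} at level $n$, I will substitute into the two recurrences and collect the top three coefficients on each side. The leading-coefficient matches,
$$\gamma_{n+1}=\alpha\gamma_n+\alpha_n,\qquad \alpha_{n+1}=\alpha\alpha_n+(\alpha^2-1)\gamma_n,$$
are merely rewrites of \eqref{1.2c}--\eqref{gamma-n-bis1} and of \eqref{form-3} combined with \eqref{1.2}. The subleading matches demand
$$u_{n+1}=\alpha u_n+\beta\gamma_n+\widehat u_n,\qquad \widehat u_{n+1}=(\alpha^2-1)u_n+2\beta(\alpha+1)\gamma_n+\alpha\widehat u_n+\beta\alpha_n;$$
after inserting \eqref{unD} and \eqref{unS} and factoring out the common $\mathfrak{c}_3$, each collapses to a relation purely among $(\alpha_n)$ and $(\gamma_n)$, provable from the derived identities \eqref{form-3}--\eqref{form-5}.

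The substantive step is the sub-subleading match, which must deliver the closed forms \eqref{vnD} and \eqref{vnS} for $v_{n+1}$ and $\widehat v_{n+1}$. From the recurrences I read off
\begin{align*}
v_{n+1}&=\alpha v_n+\beta u_n+\widehat v_n,\\
\widehat v_{n+1}&=(\alpha^2-1)v_n+2\beta(\alpha+1)u_n+\delta\gamma_n+\alpha\widehat v_n+\beta\widehat u_n,
\end{align*}
with $\delta=(\alpha^2-1)(\mathfrak{c}_3^2-4\mathfrak{c}_1\mathfrak{c}_2)$ supplied by \eqref{delta-simples}. Since $v_n$ and $\widehat v_n$ are affine in the algebraically independent parameters $\mathfrak{c}_1\mathfrak{c}_2$ and $\mathfrak{c}_3^2$, the strategy is to split each match along these two components and verify them separately; each piece reduces once more to a statement involving only $(\alpha_n)$ and $(\gamma_n)$, provable by iterated use of \eqref{1.2}, \eqref{1.2c}, and \eqref{form-1}--\eqref{form-5}. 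The main obstacle is organizational rather than conceptual: faithfully tracking the index shifts and the two independent parameters so that \eqref{vnD} and \eqref{vnS} emerge in precisely the claimed form.
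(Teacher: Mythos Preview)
Your proposal is correct and follows essentially the same route as the paper: induction on $n$ using the product rules \eqref{1.3}--\eqref{1.4} applied to the splitting $z^{n+1}=z\cdot z^n$, which yields exactly the recurrences $\mathrm{D}_x z^{n+1}=(\alpha z+\beta)\mathrm{D}_x z^n+\mathrm{S}_x z^n$ and $\mathrm{S}_x z^{n+1}=\texttt{U}_2(z)\mathrm{D}_x z^n+(\alpha z+\beta)\mathrm{S}_x z^n$, followed by coefficient identification via the identities \eqref{gamma-n-bis}--\eqref{form-5}. Your explicit separation of the $\mathfrak{c}_1\mathfrak{c}_2$ and $\mathfrak{c}_3^2$ components in the verification of $v_{n+1}$ and $\widehat v_{n+1}$ is a helpful organizational device, but the underlying argument is identical to the paper's.
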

\begin{proof}
The proof is given by mathematical induction on $n$. 
For $n=0$, we have $\mathrm{D}_x z^0 =\mathrm{D}_x 1=0$ and $\mathrm{S}_x z^0 =1$. 
Since $\gamma_0 =0$ and $\alpha_0= 1$, 
we see that (\ref{Dx-xn})--(\ref{Sx-xn}) hold for $n=0$. 
Next suppose that relations (\ref{Dx-xn})--(\ref{Sx-xn}) are true for all integer numbers less than or equal to a fixed nonnegative integer number $n$ (induction hypothesis). 
Using this hypothesis together with (\ref{1.3})--(\ref{1.4}), we obtain
\begin{align*}
\mathrm{D}_x z^{n+1} &=\mathrm{D}_x z^n ~ \mathrm{S}_x z + \mathrm{S}_x z^n ~\mathrm{D}_x z 
=  (\alpha z+\beta)\mathrm{D}_x z^n + \mathrm{S}_x z^n  \\
&= (\alpha_n +\alpha \gamma_n)z^n +(\alpha u_n +\widehat{u}_n +\beta \gamma_n)z^{n-1} +(\alpha v_n +\widehat{v}_n +\beta u_n)z^{n-2} +\cdots.
\end{align*}
Similarly,
\begin{align*}
\mathrm{S}_x z^{n+1} &=\texttt{U}_2 (z)\mathrm{D}_x z~\mathrm{D}_x z^n  + \mathrm{S}_x z^n ~\mathrm{S}_x z 
=  \texttt{U}_2 (z)\mathrm{D}_x z^n + (\alpha z+\beta)\mathrm{S}_x z^n  \\
&= \left(\alpha \alpha_n +(\alpha ^2 -1)\gamma_n \right) z^{n+1} 
+\left((\alpha ^2 -1)\left(u_n -2\gamma_n \mathfrak{c}_3 \right) +\alpha \widehat{u}_n +\beta \alpha_n  \right) z^{n} \\
&\quad+\left( (\alpha ^2 -1) \left(v_n -2u_n \mathfrak{c}_3 +(\mathfrak{c}_3^2 -4 \mathfrak{c}_1\mathfrak{c}_2 ) \gamma_n \right) +\alpha \widehat{v}_n +\beta \widehat{u}_n \right) z^{n-1} +\cdots .
\end{align*}
Therefore, using relations (\ref{gamma-n-bis})--(\ref{form-5}), we obtain 
(\ref{Dx-xn})--(\ref{Sx-xn}) with $n$ replaced by $n+1$. 
Consequently, (\ref{Dx-xn})--(\ref{Sx-xn})  holds for each nonnegative integer $n$.
\end{proof}

\subsection{A Leibniz formula}

Here we state a useful version of the Leibniz formula, involving the $x-$derivative operator,
for the left multiplication of a functional by a polynomial.
We need to state some preliminary results.
\begin{lemma}
Let $f \in \mathcal{P}$. Then
\begin{align}\label{DxnSxf}
\mathrm{D}_x ^n \mathrm{S}_x f = \alpha_n \mathrm{S}_x \mathrm{D}_x ^n f +\gamma_n \mbox{\rm $\texttt{U}_1$} \mathrm{D}_x ^{n+1}f \quad (n=0,1,\ldots).
\end{align}
\end{lemma}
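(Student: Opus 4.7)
The plan is to prove the identity by induction on $n$, using (\ref{1.4aa}), the Leibniz rule (\ref{1.3}), and the properties (\ref{DxSxU1}) of $\texttt{U}_1$, together with the recurrences satisfied by $(\alpha_n)$ and $(\gamma_n)$.

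The base case $n=0$ is trivial, since $\alpha_0=1$ and $\gamma_0=0$. It is convenient to first establish the case $n=1$ separately, as it will play the role of an ``engine'' in the inductive step. Starting from (\ref{1.4aa}), I would expand $\mathrm{D}_x(\texttt{U}_1\mathrm{D}_xf)$ via (\ref{1.3}) and (\ref{DxSxU1}), obtaining $\mathrm{D}_x(\texttt{U}_1\mathrm{D}_xf)=(\alpha^2-1)\mathrm{S}_x\mathrm{D}_xf+\alpha\texttt{U}_1\mathrm{D}_x^2f$. Substituting back and solving for $\mathrm{D}_x\mathrm{S}_xf$ yields
\begin{equation*}
\mathrm{D}_x\mathrm{S}_xf=\alpha\,\mathrm{S}_x\mathrm{D}_xf+\texttt{U}_1\mathrm{D}_x^2f,
\end{equation*}
which is exactly the desired relation for $n=1$ in view of $\alpha_1=\alpha$ and $\gamma_1=1$.

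For the inductive step, assume the formula holds for some $n\geq1$ and apply $\mathrm{D}_x$ to both sides of the induction hypothesis:
\begin{equation*}
\mathrm{D}_x^{n+1}\mathrm{S}_xf=\alpha_n\,\mathrm{D}_x\mathrm{S}_x(\mathrm{D}_x^nf)+\gamma_n\,\mathrm{D}_x\bigl(\texttt{U}_1\mathrm{D}_x^{n+1}f\bigr).
\end{equation*}
The first term on the right is rewritten by applying the already-proved $n=1$ identity to the polynomial $\mathrm{D}_x^nf$, giving $\mathrm{D}_x\mathrm{S}_x(\mathrm{D}_x^nf)=\alpha\,\mathrm{S}_x\mathrm{D}_x^{n+1}f+\texttt{U}_1\mathrm{D}_x^{n+2}f$. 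The second term is again handled by (\ref{1.3}) and (\ref{DxSxU1}), producing $(\alpha^2-1)\mathrm{S}_x\mathrm{D}_x^{n+1}f+\alpha\texttt{U}_1\mathrm{D}_x^{n+2}f$. Collecting yields
\begin{equation*}
\mathrm{D}_x^{n+1}\mathrm{S}_xf=\bigl(\alpha\alpha_n+(\alpha^2-1)\gamma_n\bigr)\mathrm{S}_x\mathrm{D}_x^{n+1}f+(\alpha_n+\alpha\gamma_n)\texttt{U}_1\mathrm{D}_x^{n+2}f.
\end{equation*}

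It then remains to verify the two scalar identities
\begin{equation*}
\alpha\alpha_n+(\alpha^2-1)\gamma_n=\alpha_{n+1},\qquad \alpha_n+\alpha\gamma_n=\gamma_{n+1}.
\end{equation*}
The first is an immediate rearrangement of (\ref{form-3}) combined with the recurrence (\ref{1.2}), and the second follows by combining (\ref{gamma-n-bis1}) (shifted in $n$) with (\ref{1.2c}). With these in hand, the right-hand side becomes $\alpha_{n+1}\mathrm{S}_x\mathrm{D}_x^{n+1}f+\gamma_{n+1}\texttt{U}_1\mathrm{D}_x^{n+2}f$, closing the induction. The only mildly delicate point is the bookkeeping of the recurrence relations satisfied by $(\alpha_n)$ and $(\gamma_n)$; the rest is a direct application of the operator calculus already summarised in Lemma \ref{propertyDxSx}.
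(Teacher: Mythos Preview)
Your proof is correct and follows essentially the same route as the paper's: induction on $n$, with the inductive step driven by applying the $n=1$ identity to $\mathrm{D}_x^n f$ and expanding $\mathrm{D}_x(\texttt{U}_1\mathrm{D}_x^{n+1}f)$ via (\ref{1.3}) and (\ref{DxSxU1}), then closing with the scalar identities $\alpha\alpha_n+(\alpha^2-1)\gamma_n=\alpha_{n+1}$ and $\alpha_n+\alpha\gamma_n=\gamma_{n+1}$. The only cosmetic difference is that you isolate and prove the $n=1$ case explicitly from (\ref{1.4aa}) before the induction, whereas the paper simply invokes it as ``already proved'' within the inductive step.
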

\begin{proof}
Once again, we use mathematical induction on $n$. Clearly, (\ref{DxnSxf}) holds for $n=0$. Suppose that (\ref{DxnSxf}) is true for all positive integers less than or equal to a fixed integer $n$. Then by using successively (\ref{DxnSxf}) firstly for $n$ (induction hypothesis) and secondly for $n=1$ (already proved) with $f$ replaced by $\mathrm{D}_x ^n f$, and applying (\ref{1.3}) to $\mathrm{D}_x (\texttt{U}_1 \mathrm{D}_x ^{n+1} f)$, and also taking into account \eqref{DxSxU1}, we deduce
\begin{align*}
\mathrm{D}_x ^{n+1} \mathrm{S}_x f &=\mathrm{D}_x \left(\mathrm{D}_x ^n \mathrm{S}_x f \right)=\mathrm{D}_x \left( \alpha_n \mathrm{S}_x \mathrm{D}_x ^n f +\gamma_n \texttt{U}_1 \mathrm{D}_x ^{n+1}f \right) \\
&=\alpha_n \mathrm{D}_x \mathrm{S}_x (\mathrm{D}_x ^n f) +\gamma_n\mathrm{D}_x \left( \texttt{U}_1 \mathrm{D}_x ^{n+1}f\right) \\
&=\alpha_n \left(\alpha \mathrm{S}_x \mathrm{D}_x ^{n+1} f +\texttt{U}_1 \mathrm{D}_x ^{n+2}f  \right) +\gamma_n \left((\alpha ^2 -1)\mathrm{S}_x \mathrm{D}_x ^{n+1}f +\alpha \texttt{U}_1 \mathrm{D}_x ^{n+2}f  \right) \\
&= \left(\alpha \alpha_n +(\alpha ^2 -1)\gamma_n  \right)\mathrm{S}_x \mathrm{D}_x ^{n+1}f +(\alpha_n +\alpha \gamma_n)\texttt{U}_1 \mathrm{D}_x ^{n+2}f.
\end{align*}
Finally, using properties  \eqref{form-3}, \eqref{1.2}, \eqref{gamma-n-bis1}, and \eqref{gamma-n-bis}, we 
see that \eqref{DxnSxf} is true whenever $n$ is replaced by $n+1$. Hence \eqref{DxnSxf} is true for all $n$.
\end{proof}

 The next result is a functional version of \eqref{DxnSxf}.
 
\begin{lemma}\label{property3}
Let ${\bf u}\in\mathcal{P}^*$. Then
\begin{equation}\label{33}
\alpha \mathbf{D}_x ^n \mathbf{S}_x {\bf u}
= \alpha_{n+1} \mathbf{S}_x \mathbf{D}_x^n {\bf u}
+\gamma_n\mbox{\rm $\texttt{U}_1$}\mathbf{D}_x^{n+1}{\bf u}\quad (n=0,1,2,\ldots).
\end{equation}

\end{lemma}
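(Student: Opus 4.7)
The plan is to prove \eqref{33} by induction on $n$, promoting the polynomial identity \eqref{DxnSxf} to the dual side by iterating the ``base'' functional relation \eqref{3aa}.

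For $n=0$, since $\alpha_1=\alpha$ and $\gamma_0=0$, both sides of \eqref{33} collapse to $\alpha\mathbf{S}_x{\bf u}$, which handles the base case. For the inductive step, assume \eqref{33} for some $n\geq 0$ and apply $\mathbf{D}_x$ to both sides, obtaining
\[
\alpha\,\mathbf{D}_x^{n+1}\mathbf{S}_x{\bf u}
=\alpha_{n+1}\,\mathbf{D}_x\mathbf{S}_x\mathbf{D}_x^n{\bf u}+\gamma_n\,\mathbf{D}_x\!\left(\texttt{U}_1\mathbf{D}_x^{n+1}{\bf u}\right).
\]
Now I would use \eqref{3aa} with ${\bf u}$ replaced by $\mathbf{D}_x^n{\bf u}$ to rewrite the first term as $\alpha\,\alpha_{n+1}\mathbf{S}_x\mathbf{D}_x^{n+1}{\bf u}+\alpha_{n+1}\mathbf{D}_x\!\left(\texttt{U}_1\mathbf{D}_x^{n+1}{\bf u}\right)$, which produces the factor $(\alpha_{n+1}+\gamma_n)\mathbf{D}_x(\texttt{U}_1\mathbf{D}_x^{n+1}{\bf u})$ together with $\alpha\alpha_{n+1}\mathbf{S}_x\mathbf{D}_x^{n+1}{\bf u}$.

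To eliminate the remaining $\mathbf{D}_x(\texttt{U}_1\,\cdot)$, I would apply \eqref{a} with $f=\texttt{U}_1$ to ${\bf v}:=\mathbf{D}_x^{n+1}{\bf u}$. Using the identities $\mathrm{S}_x\texttt{U}_1=\alpha\texttt{U}_1$ and $\mathrm{D}_x\texttt{U}_1=\alpha^2-1$ from \eqref{DxSxU1}, the combination $\mathrm{S}_x\texttt{U}_1-\alpha^{-1}\texttt{U}_1\mathrm{D}_x\texttt{U}_1$ simplifies to $\alpha^{-1}\texttt{U}_1$, giving the clean expansion
\[
\mathbf{D}_x\!\left(\texttt{U}_1\,{\bf v}\right)=\alpha^{-1}\texttt{U}_1\,\mathbf{D}_x{\bf v}+\alpha^{-1}(\alpha^2-1)\,\mathbf{S}_x{\bf v}.
\]
Substituting and collecting, the coefficient of $\texttt{U}_1\mathbf{D}_x^{n+2}{\bf u}$ becomes $\alpha^{-1}(\alpha_{n+1}+\gamma_n)$, which by \eqref{gamma-n-bis1} (with index shift) equals $\gamma_{n+1}$; the coefficient of $\mathbf{S}_x\mathbf{D}_x^{n+1}{\bf u}$ becomes $\alpha\alpha_{n+1}+(\alpha^2-1)\gamma_{n+1}$.

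The last piece is the scalar identity $\alpha\alpha_{n+1}+(\alpha^2-1)\gamma_{n+1}=\alpha_{n+2}$, which I would derive by rewriting \eqref{form-3} as $\alpha\alpha_{n+1}-\alpha_n=(\alpha^2-1)\gamma_{n+1}$ and combining with the three-term recurrence \eqref{1.2} to get $\alpha_{n+2}=2\alpha\alpha_{n+1}-\alpha_n=\alpha\alpha_{n+1}+(\alpha^2-1)\gamma_{n+1}$. This completes the induction. I do not expect a serious obstacle: the only delicate point is keeping track of the index shifts in the $\alpha_n$/$\gamma_n$ relations (the polynomial version carries $\alpha_n$ while the functional version carries $\alpha_{n+1}$, which is precisely what the extra application of \eqref{3aa} introduces).
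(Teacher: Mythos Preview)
Your proof is correct and follows essentially the same inductive scheme as the paper's: apply $\mathbf{D}_x$ to the induction hypothesis, commute $\mathbf{D}_x\mathbf{S}_x$ via the base relation, expand $\mathbf{D}_x(\texttt{U}_1\,\cdot)$ with \eqref{a} and \eqref{DxSxU1}, and close with the $\alpha_n/\gamma_n$ recurrences. The only cosmetic difference is that the paper first records the case $n=1$ of \eqref{33} separately (obtained from \eqref{3aa} together with \eqref{a}) and then invokes that special case inside the inductive step, whereas you invoke \eqref{3aa} directly; this lets you run the induction cleanly from $n=0$ without the separate $n=1$ verification.
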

\begin{proof}
We prove \eqref{33} by mathematical induction on $n$.
Since $\alpha_1=\alpha$ and $\gamma_0=0$, then \eqref{33} is trivial for $n=0$.
For $n=1$, \eqref{33} is obtained multiplying both sides of \eqref{3aa} by $\alpha$ and
taking into account that, by \eqref{a} and \eqref{DxSxU1}, the equality
$\alpha\mathbf{D}_x\big(\texttt{U}_1\mathbf{D}_x{\bf u}\big)
=\texttt{U}_1\mathbf{D}_x^2{\bf u}+(\alpha^2-1)\mathbf{S}_x\mathbf{D}_x{\bf u}$ holds,
and recalling also that $\alpha_2=2\alpha^2-1$ and $\gamma_1=1$.
Suppose now that property \eqref{33} holds for a fixed integer $n\in\mathbb{N}$ (induction hypothesis).
Then, we have
\begin{equation}\label{3Aa1}
\alpha \mathbf{D}_x^{n+1} \mathbf{S}_x {\bf u}
=\mathbf{D}_x \big(\alpha \mathbf{D}_x^n \mathbf{S}_x {\bf u}\big)
=\alpha_{n+1} \mathbf{D}_x \mathbf{S}_x \mathbf{D}_x^n {\bf u}
+\gamma_n\mathbf{D}_x\big(\texttt{U}_1\mathbf{D}_x^{n+1}{\bf u}\big).
\end{equation}
Considering \eqref{33} for $n=1$ and replacing therein
${\bf u}$ by $\mathbf{D}_x^{n}{\bf u}$, we obtain
\begin{equation}\label{3Aa2}
\mathbf{D}_x\mathbf{S}_x\mathbf{D}_x^n{\bf u}
=\alpha^{-1}\alpha_2\mathbf{S}_x\mathbf{D}_x^{n+1}{\bf u}
+\alpha^{-1}\gamma_n\texttt{U}_1\mathbf{D}_x^{n+2}{\bf u}\;.
\end{equation}
Moreover, using again \eqref{a} and \eqref{DxSxU1}, we deduce
\begin{equation}\label{3Aa3}
\mathbf{D}_x\big(\texttt{U}_1\mathbf{D}_x^{n+1}{\bf u}\big)
=\alpha^{-1}\texttt{U}_1\mathbf{D}_x^{n+2}{\bf u}
+\alpha^{-1}(\alpha^2-1)\mathbf{S}_x\mathbf{D}_x^{n+1}{\bf u}\;.
\end{equation}
Putting \eqref{3Aa2} and \eqref{3Aa3} into the right-hand side of \eqref{3Aa1}
and taking into account \eqref{gamma-n-bis1} and \eqref{gamma-n-bis2},
we obtain \eqref{33} with $n$ replaced by $n+1$.
This proves \eqref{33}.
\end{proof}

 Next, we introduce an operator $\mathrm{T}_{n,k}:\mathcal{P}\to\mathcal{P}$
($n=0,1,\ldots;\, k=0,1,\ldots, n$),
defined for each $f\in\mathcal{P}$ as follows: if $n=k=0$, set
\begin{align}\label{T00}
\mathrm{T}_{0,0}f &:=f\;;
\end{align}
and if $n\geq1$ and $0\leq k\leq n$, define recurrently
\begin{align}\label{Tnk}
\mathrm{T}_{n,k}f&:= \mathrm{S}_x \mathrm{T}_{n-1,k}f
-\frac{\gamma_{n-k}}{ \alpha_{n-k}}\texttt{U}_1 \mathrm{D}_x \mathrm{T}_{n-1,k}f
+\frac{1}{\alpha_{n+1-k}} \mathrm{D}_x \mathrm{T}_{n-1,k-1}f\,,
\end{align}
with the conventions $\mathrm{T}_{n,k}f:=0$ whenever $k>n$ or $k<0$. 
Note that
$$
\deg \mathrm{T}_{n,k}f\leq\deg f-k\;.
$$
We are ready to state the following

\begin{proposition}[Leibniz's formula]\label{Leibniz-rule-NUL}
Let ${\bf u}\in\mathcal{P}^*$ and $f\in\mathcal{P}$. Then
\begin{align}
\mathbf{D}_x ^n \big(f{\bf u}\big)
=\sum_{k=0}^{n} \mathrm{T}_{n,k}f\, \mathbf{D}_x^{n-k} \mathbf{S}_x^k {\bf u}
 \quad (n=0,1,\ldots),\label{leibnizfor-NUL}
\end{align}
where $\mathrm{T}_{n,k}f$ is a polynomial defined by \eqref{T00}--\eqref{Tnk}.
\end{proposition}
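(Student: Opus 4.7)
The plan is to argue by induction on $n$. The base case $n=0$ is immediate from $\mathrm{T}_{0,0}f=f$ and $\mathbf{D}_x^0\mathbf{S}_x^0={\rm id}$.

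For the inductive step, assuming \eqref{leibnizfor-NUL} for a fixed $n$, I would apply $\mathbf{D}_x$ to both sides and use property \eqref{a} with the functional $\mathbf{v}_k:=\mathbf{D}_x^{n-k}\mathbf{S}_x^k{\bf u}$ and the polynomial $g_k:=\mathrm{T}_{n,k}f$. This produces two kinds of contributions for each $k$:
\begin{equation*}
\bigl(\mathrm{S}_x g_k-\alpha^{-1}\texttt{U}_1\mathrm{D}_x g_k\bigr)\mathbf{D}_x\mathbf{v}_k\quad\text{and}\quad \alpha^{-1}(\mathrm{D}_x g_k)\mathbf{S}_x\mathbf{v}_k.
\end{equation*}
The first term contains $\mathbf{D}_x\mathbf{v}_k=\mathbf{D}_x^{n+1-k}\mathbf{S}_x^k{\bf u}$, already in the desired form. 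The awkward term is $\mathbf{S}_x\mathbf{v}_k=\mathbf{S}_x\mathbf{D}_x^{n-k}\mathbf{S}_x^k{\bf u}$, which I would rewrite using Lemma \ref{property3} (with $m=n-k$ applied to $\mathbf{S}_x^k{\bf u}$): solving \eqref{33} for $\mathbf{S}_x\mathbf{D}_x^m$ yields
\begin{equation*}
\mathbf{S}_x\mathbf{D}_x^{n-k}\mathbf{S}_x^k{\bf u}=\frac{\alpha}{\alpha_{n-k+1}}\mathbf{D}_x^{n-k}\mathbf{S}_x^{k+1}{\bf u}-\frac{\gamma_{n-k}}{\alpha_{n-k+1}}\texttt{U}_1\mathbf{D}_x^{n+1-k}\mathbf{S}_x^k{\bf u}.
\end{equation*}
After substitution, every contribution is now expressed in terms of the basis $\mathbf{D}_x^{n+1-j}\mathbf{S}_x^j{\bf u}$ for $0\leq j\leq n+1$.

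I would then collect, for each fixed $j$, the coefficient polynomial multiplying $\mathbf{D}_x^{n+1-j}\mathbf{S}_x^j{\bf u}$. Two contributions appear: the one coming from $g_j$ (the diagonal part) and the one coming from the $\mathbf{S}_x\mathbf{v}_{j-1}$ expansion (the shift part). The diagonal part equals
\begin{equation*}
\mathrm{S}_x\mathrm{T}_{n,j}f-\alpha^{-1}\texttt{U}_1\mathrm{D}_x\mathrm{T}_{n,j}f-\alpha^{-1}\frac{\gamma_{n-j}}{\alpha_{n-j+1}}\texttt{U}_1\mathrm{D}_x\mathrm{T}_{n,j}f=\mathrm{S}_x\mathrm{T}_{n,j}f-\frac{\alpha_{n-j+1}+\gamma_{n-j}}{\alpha\,\alpha_{n-j+1}}\texttt{U}_1\mathrm{D}_x\mathrm{T}_{n,j}f,
\end{equation*}
which by the identity \eqref{gamma-n-bis1} (giving $\alpha_{n-j+1}+\gamma_{n-j}=\alpha\gamma_{n-j+1}$) collapses to $\mathrm{S}_x\mathrm{T}_{n,j}f-\frac{\gamma_{n+1-j}}{\alpha_{n+1-j}}\texttt{U}_1\mathrm{D}_x\mathrm{T}_{n,j}f$. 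The shift part, from the $\frac{\alpha}{\alpha_{n-j+2}}\mathbf{D}_x^{n+1-j}\mathbf{S}_x^j$ piece of $\mathbf{S}_x\mathbf{v}_{j-1}$ multiplied by $\alpha^{-1}\mathrm{D}_x\mathrm{T}_{n,j-1}f$, contributes $\frac{1}{\alpha_{n+2-j}}\mathrm{D}_x\mathrm{T}_{n,j-1}f$. Summing these reproduces exactly the recurrence \eqref{Tnk} for $\mathrm{T}_{n+1,j}f$, completing the induction.

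The main obstacle is purely bookkeeping: tracking the two indices $(n,k)$ and the index shifts that occur when $\mathbf{S}_x$ passes through a power of $\mathbf{D}_x$. The essential algebraic miracle that makes the combination close up as the recurrence \eqref{Tnk} is the identity $\alpha_{m+1}+\gamma_m=\alpha\gamma_{m+1}$, so before starting the calculation I would verify that \eqref{gamma-n-bis1} is precisely the collapse one needs, and also check the boundary cases $j=0$ (where $\mathrm{T}_{n,-1}f=0$ automatically kills the shift contribution) and $j=n+1$ (where $\mathrm{T}_{n,n+1}f=0$ kills the diagonal contribution, leaving $\alpha_1=\alpha$ to reproduce the factor $\alpha^{-1}$ correctly).
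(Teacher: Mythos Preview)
Your proposal is correct and follows essentially the same route as the paper's proof: induction on $n$, expand each summand via \eqref{a}, rewrite $\mathbf{S}_x\mathbf{D}_x^{n-k}\mathbf{S}_x^k{\bf u}$ using \eqref{33}, and use \eqref{gamma-n-bis1} to collapse the $\texttt{U}_1$ coefficients into the recurrence \eqref{Tnk}. The only cosmetic difference is that the paper recognizes the resulting expression as $\mathrm{T}_{n+1,k}f-\alpha_{n+2-k}^{-1}\mathrm{D}_x\mathrm{T}_{n,k-1}f$ and then telescopes over $k$, whereas you collect the coefficient of each $\mathbf{D}_x^{n+1-j}\mathbf{S}_x^j{\bf u}$ directly; the two computations are the same.
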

\begin{proof}
The proof is done by mathematical induction on $n$.
Clearly, (\ref{leibnizfor-NUL}) is true if $n=0$.
Suppose now that (\ref{leibnizfor-NUL}) holds for a fixed nonnegative integer $n$.
Then
\begin{equation}\label{eq-Leib1}
\mathbf{D}^{n+1}_x \big(f{\bf u}\big)
=\mathbf{D}_x\big(\mathbf{D}^{n}_x (f{\bf u})\big)
=\sum_{k=0}^n \mathbf{D}_x\big(\mathrm{T}_{n,k}f \mathbf{D}_x^{n-k} \mathbf{S}_x^k {\bf u}\big)\;.
\end{equation}
Notice that, by (\ref{33}),
\begin{equation}\label{3new}
\mathbf{S}_x\mathbf{D}_x^{n-k} \mathbf{S}_x^k {\bf u}
=\frac{1}{\alpha_{n+1-k}}\Big(\alpha\mathbf{D}_x^{n-k} \mathbf{S}_x^{k+1}{\bf u}
-\gamma_{n-k}\texttt{U}_1\mathbf{D}_x^{n+1-k} \mathbf{S}_x^{k}{\bf u}\Big)\;.
\end{equation}
Therefore, using successively (\ref{a}), (\ref{3new}), (\ref{gamma-n-bis1}), and (\ref{Tnk}), we may write
\begin{align*}
&\mathbf{D}_x\big(\mathrm{T}_{n,k}f \mathbf{D}_x^{n-k} \mathbf{S}_x^k {\bf u}\big) \\
&\quad=\big(\mathrm{S}_x\mathrm{T}_{n,k}f-\alpha^{-1}\texttt{U}_1\mathrm{D}_x \mathrm{T}_{n,k}f\big)
\mathbf{D}_x^{n+1-k} \mathbf{S}_x^k {\bf u}
+\alpha^{-1} \mathrm{D}_x \mathrm{T}_{n,k}f \mathbf{S}_x \mathbf{D}_x^{n-k} \mathbf{S}_x^k {\bf u} \\
&\quad=\Big(\mathrm{S}_x\mathrm{T}_{n,k}f-\frac{\gamma_{n+1-k}}{\alpha_{n+1-k}}\texttt{U}_1\mathrm{D}_x \mathrm{T}_{n,k}f\Big)
\mathbf{D}_x^{n+1-k} \mathbf{S}_x^k {\bf u}
+\frac{\mathrm{D}_x \mathrm{T}_{n,k}f}{\alpha_{n+1-k}} \mathbf{D}_x^{n-k} \mathbf{S}_x^{k+1} {\bf u} \\
&\quad=\Big(\mathrm{T}_{n+1,k}f-\frac{\mathrm{D}_x \mathrm{T}_{n,k-1}f}{\alpha_{n+2-k}}\Big)
\mathbf{D}_x^{n+1-k} \mathbf{S}_x^k {\bf u}
+\frac{\mathrm{D}_x \mathrm{T}_{n,k}f}{\alpha_{n+1-k}} \mathbf{D}_x^{n-k} \mathbf{S}_x^{k+1} {\bf u}\,.
\end{align*}
Substituting this expression in the right-hand side of \eqref{eq-Leib1} and then
applying the method of telescoping sums, we get
$$
\mathbf{D}^{n+1}_x \big(f{\bf u}\big)
=\sum_{k=0}^n \mathrm{T}_{n+1,k}f \mathbf{D}_x^{n+1-k} \mathbf{S}_x^k {\bf u}
+\frac{\mathrm{D}_x\mathrm{T}_{n,n}f}{\alpha_1}\mathbf{S}_x^{n+1}{\bf u}
-\frac{\mathrm{D}_x\mathrm{T}_{n,-1}f}{\alpha_{n+2}}\mathbf{D}_x^{n+1}{\bf u}\,.
$$
Finally, since $\mathrm{T}_{n,-1}f=0$ and $\frac{1}{\alpha_1}\mathbf{D}_x\mathrm{T}_{n,n}f=\mathrm{T}_{n+1,n+1}f$
(this last equality follows from \eqref{Tnk} taking therein $k=n$
and then in the resulting expression shifting $n$ into $n+1$),
we obtain (\ref{leibnizfor-NUL}) with $n$ replaced by $n+1$.
Thus (\ref{leibnizfor-NUL}) is proved.
\end{proof}

\begin{corollary}\label{LeibnizCor1}
Consider the lattice $x(s):=\mathfrak{c}_1 q^{-s} +\mathfrak{c}_2 q^s +\mathfrak{c}_3$. 
Let ${\bf u}\in\mathcal{P}^*$ and $f\in\mathcal{P}_2$. Write $f(z)=az^2+bz+c\,,$ with $a,b,c\in\mathbb{C}$. Then
\begin{align}\label{leibnizfor-degree-pi-2}
\mathbf{D}_x ^n (f{\bf u})
&=\left(\frac{a\alpha}{\alpha_n\alpha_{n-1}}\,(z-\mathfrak{c}_3)^2
+\frac{f'(\mathfrak{c}_3)}{\alpha_n}(z-\mathfrak{c}_3) +f(\mathfrak{c}_3)+\frac{4a(1-\alpha^2)\gamma_n \mathfrak{c}_1\mathfrak{c}_2}{\alpha_{n-1}}\right) \mathbf{D}_x^n{\bf u}  \\
&\quad+\frac{\gamma_n}{\alpha_n}\left( \frac{a(\alpha_n +\alpha \alpha_{n-1}) }{\alpha_{n-1}^2}\,(z-\mathfrak{c}_3)
+f'(c_3)\right) \mathbf{D}_x^{n-1}\mathbf{S}_x{\bf u}\nonumber \\
&\quad+\frac{a\gamma_n\gamma_{n-1}}{\alpha_{n-1}^2}\,\mathbf{D}_x^{n-2}\mathbf{S}_x^2{\bf u}  \nonumber
\end{align}
$(n=0,1,\ldots)$.
In particular, 
\begin{align} \label{leib-for-degree-pi-1}
\mathbf{D}_x ^n \big((bz+c){\bf u}\big)= \left(\frac{b(z-\mathfrak{c}_3)}{\alpha_n}+b\mathfrak{c}_3+c\right)\mathbf{D}_x^n{\bf u} +\frac{b\gamma_n}{\alpha_n}\ \mathbf{D}_x^{n-1} \mathbf{S}_x {\bf u}\,.
\end{align}
\end{corollary}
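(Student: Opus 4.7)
The plan is to apply the Leibniz formula of Proposition \ref{Leibniz-rule-NUL} to $f\in\mathcal{P}_2$ and exploit the degree bound $\deg\mathrm{T}_{n,k}f\le\deg f-k$. Since $\deg f\le 2$, the polynomial $\mathrm{T}_{n,k}f$ vanishes identically for every $k\ge 3$, so the sum in \eqref{leibnizfor-NUL} collapses to the three terms $k=0,1,2$, matching the shape of \eqref{leibnizfor-degree-pi-2}. The substance of the proof is therefore the identification of $\mathrm{T}_{n,0}f$, $\mathrm{T}_{n,1}f$, and $\mathrm{T}_{n,2}f$ with the explicit polynomials printed in the statement.

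To carry this out cleanly, I would first expand $f$ in powers of $(z-\mathfrak{c}_3)$, namely $f(z)=a(z-\mathfrak{c}_3)^2+f'(\mathfrak{c}_3)(z-\mathfrak{c}_3)+f(\mathfrak{c}_3)$, since in this basis the elementary actions are simple: $\mathrm{D}_x(z-\mathfrak{c}_3)=1$ and $\mathrm{S}_x(z-\mathfrak{c}_3)=\alpha(z-\mathfrak{c}_3)$ (using $\beta=(1-\alpha)\mathfrak{c}_3$ from \eqref{alpha-beta}), and, via \eqref{1.3}--\eqref{1.4} together with \eqref{U1-simples-bis}--\eqref{U2-simples-bis}, one obtains $\mathrm{D}_x(z-\mathfrak{c}_3)^2=2\alpha(z-\mathfrak{c}_3)$ and $\mathrm{S}_x(z-\mathfrak{c}_3)^2=(2\alpha^2-1)(z-\mathfrak{c}_3)^2+4(1-\alpha^2)\mathfrak{c}_1\mathfrak{c}_2$. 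The constant piece $4(1-\alpha^2)\mathfrak{c}_1\mathfrak{c}_2$ from this last identity is precisely what will propagate into the extra summand $\tfrac{4a(1-\alpha^2)\gamma_n\mathfrak{c}_1\mathfrak{c}_2}{\alpha_{n-1}}$ in the final formula.

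The identification is then by induction on $n$, carried out simultaneously for $k=0,1,2$. The base case $n=0$ is the Taylor expansion $\mathrm{T}_{0,0}f=f$, consistent with the conventions $\alpha_{-1}=\alpha$ and $\gamma_0=0$. For the inductive step, I substitute the induction hypothesis into the defining recurrence
$$
\mathrm{T}_{n,k}f=\mathrm{S}_x\mathrm{T}_{n-1,k}f-\frac{\gamma_{n-k}}{\alpha_{n-k}}\texttt{U}_1\mathrm{D}_x\mathrm{T}_{n-1,k}f+\frac{1}{\alpha_{n+1-k}}\mathrm{D}_x\mathrm{T}_{n-1,k-1}f,
$$
collect the coefficients of $(z-\mathfrak{c}_3)^j$ for $j=0,1,2$, and match them against the claimed expressions, using the scalar identities \eqref{1.2}--\eqref{1.2c} and \eqref{gamma-n-bis}--\eqref{form-5}, most notably $2\alpha\alpha_n=\alpha_{n+1}+\alpha_{n-1}$, $\gamma_{n+1}-\gamma_{n-1}=2\alpha_n$, and $\alpha_n+\gamma_{n-1}=\alpha\gamma_n$.

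The main obstacle is the bookkeeping for the constant-in-$z$ piece of $\mathrm{T}_{n,0}f$: this term is fed at every recursive step by the $4(1-\alpha^2)\mathfrak{c}_1\mathfrak{c}_2$ contribution coming out of $\mathrm{S}_x((z-\mathfrak{c}_3)^2)$, and its accumulation must be shown to telescope into the precise closed form $\tfrac{4a(1-\alpha^2)\gamma_n\mathfrak{c}_1\mathfrak{c}_2}{\alpha_{n-1}}$. Once the three polynomials $\mathrm{T}_{n,k}f$ are established, the special case \eqref{leib-for-degree-pi-1} follows at once by setting $a=0$, which simultaneously kills $\mathrm{T}_{n,2}f$, the quadratic-in-$z$ part of $\mathrm{T}_{n,1}f$, and the extra constant in $\mathrm{T}_{n,0}f$.
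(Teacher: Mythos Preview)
Your proposal is correct and follows essentially the same route as the paper: expand $f$ in powers of $(z-\mathfrak{c}_3)$, invoke the Leibniz formula and the degree bound $\deg\mathrm{T}_{n,k}f\le\deg f-k$ to truncate at $k=2$, and then identify $\mathrm{T}_{n,0}f$, $\mathrm{T}_{n,1}f$, $\mathrm{T}_{n,2}f$ by induction on $n$ via the recurrence \eqref{Tnk}, using exactly the elementary $\mathrm{D}_x/\mathrm{S}_x$ actions on $(z-\mathfrak{c}_3)$ and $(z-\mathfrak{c}_3)^2$ and the scalar relations \eqref{gamma-n-bis}--\eqref{form-5} that you list. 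One small slip: in your final sentence you refer to ``the quadratic-in-$z$ part of $\mathrm{T}_{n,1}f$'', but $\mathrm{T}_{n,1}f$ is at most linear; you mean the $(z-\mathfrak{c}_3)$-term, whose coefficient is proportional to $a$.
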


\begin{proof}
Since \eqref{leib-for-degree-pi-1} is the particular case  of \eqref{leibnizfor-degree-pi-2} for $a=0$,
we only need to prove \eqref{leibnizfor-degree-pi-2}.
This can be proved combining the Leibniz formula (\ref{leibnizfor-NUL}) and  identities (\ref{a}) and (\ref{33}). 
Alternatively, we may apply induction on $n$, as follows. 
Define $g(z):=f(z-\mathfrak{c}_3)=a(z-\mathfrak{c}_3)^2 +b(z-\mathfrak{c}_3)+c$. We need to show that 
\begin{align}
&(\mathrm{T}_{n,0}g)(z)=  g\left(\frac{z-\mathfrak{c}_3}{\alpha_n} +\mathfrak{c}_3  \right) + \frac{a\gamma_n}{\alpha_{n-1}} \texttt{U}_2 \left(\frac{z-\mathfrak{c}_3}{\alpha_n} +\mathfrak{c}_3  \right) ,\label{Tn0-a} \\
&(\mathrm{T}_{n,1}g)(z)= \frac{\gamma_n}{\alpha_n}\left( \frac{a(\alpha_n +\alpha \alpha_{n-1}) }{\alpha_{n-1}^2}\,(z-\mathfrak{c}_3)
+b\right),  \label{Tn1-a}\\
&(\mathrm{T}_{n,2}g)(z)=\frac{a\gamma_n\gamma_{n-1}}{\alpha_{n-1} ^2} \label{Tn2-a}
\end{align}
for each $n=0,1,2,\ldots$, where $\mathrm{T}_{n,k}f$ is defined by \eqref{T00}--\eqref{Tnk}.
Note that 
\begin{align}\label{TrueTn0-a}
(\mathrm{T}_{n,0}g)(z)=\frac{\alpha a}{\alpha_n \alpha_{n-1}}\left(z-\mathfrak{c}_3\right)^2 +\frac{b}{\alpha_n}\left(z-\mathfrak{c}_3\right) +c+\frac{4a(1-\alpha^2)\gamma_n}{\alpha_{n-1}}\mathfrak{c}_1\mathfrak{c}_2.
\end{align}
We proceed by induction on $n$.
Setting $n=0$ in \eqref{Tn0-a}--\eqref{Tn2-a}, we obtain $\mathrm{T}_{0,0}g=g$ and $\mathrm{T}_{0,1}g=0=\mathrm{T}_{0,2}g$. This agrees with \eqref{T00}--\eqref{Tnk}. Next suppose that \eqref{Tn0-a}--\eqref{Tn2-a} hold for all positive integers up to a fixed $n$. Then, by \eqref{Tnk}, we have
\begin{align}
&\mathrm{T}_{n+1,0}g=\mathrm{S}_x (\mathrm{T}_{n,0}g) -\frac{\gamma_{n+1}}{\alpha_{n+1}}\texttt{U}_1\mathrm{D}_x (\mathrm{T}_{n,0}g), \label{Htn0-a}\\
&\mathrm{T}_{n+1,1}g= \mathrm{S}_x (\mathrm{T}_{n,1}g) -\frac{\gamma_{n}}{\alpha_{n}}\texttt{U}_1\mathrm{D}_x (\mathrm{T}_{n,1}g)+\frac{1}{\alpha_{n+1}} \mathrm{D}_x (\mathrm{T}_{n,0}g) ,\label{Htn1-a}\\
&\mathrm{T}_{n+1,2}g=\mathrm{S}_x (\mathrm{T}_{n,2}g) +\frac{1}{\alpha_n} \mathrm{D}_x (\mathrm{T}_{n,1}g). \label{Htn2-a}
\end{align}
Using the identities 
\begin{align*}
\mathrm{S}_x \left((z-\mathfrak{c}_3)^2\right)&=(2\alpha^2 -1)(z-\mathfrak{c}_3)^2 +4(1-\alpha^2)\mathfrak{c}_1\mathfrak{c}_2\,, \\
\mathrm{D}_x \left((z-\mathfrak{c}_3)^2\right)&=2\alpha(z-\mathfrak{c}_3) \,, \\
\mathrm{S}_x \left((z-\mathfrak{c}_3)\right)&=\alpha(z-\mathfrak{c}_3),
\end{align*} 
we find 
\begin{align*}
\mathrm{S}_x(\mathrm{T}_{n,0}g)(z)&=\frac{\alpha(2\alpha^2 -1) a}{\alpha_n \alpha_{n-1}}\left(z-\mathfrak{c}_3\right)^2 +\frac{\alpha b}{\alpha_n}\left(z-\mathfrak{c}_3\right)+c \\
&\quad+\frac{4a(1-\alpha^2)(\alpha+\alpha_n\gamma_n)}{\alpha_n\alpha_{n-1}}\mathfrak{c}_1\mathfrak{c}_2,\\
\mathrm{S}_x(\mathrm{T}_{n,1}g)(z)&=\frac{\gamma_n}{\alpha_n}\left( \frac{\alpha a(\alpha_n +\alpha\alpha_{n-1})}{\alpha_{n_1} ^2}(z-\mathfrak{c}_3)+b \right),\\
\mathrm{D}_x(\mathrm{T}_{n,0}g)(z)&=\frac{1}{\alpha_n}\left( \frac{2\alpha^2 a}{\alpha_{n-1}}(z-\mathfrak{c}_3)+b \right),\\
\mathrm{D}_x(\mathrm{T}_{n,1}g)(z)&=\frac{a\gamma_n(\alpha_n +\alpha\alpha_{n-1})}{\alpha_n\alpha_{n-1} ^2}.
\end{align*}
Therefore, from \eqref{Htn0-a} and using \eqref{gamma-n-bis}--\eqref{form-5}, we obtain
\begin{align*}
(\mathrm{T}_{n+1,0}g)(z)&=\frac{\alpha a}{\alpha_n\alpha_{n-1}}\left(2\alpha^2 -1+2\alpha\frac{(1-\alpha^2)\gamma_{n+1}}{\alpha_{n+1}} \right)(z-\mathfrak{c}_3)^2 \\
&\quad+\frac{b(\alpha\alpha_{n+1} +(1-\alpha^2)\gamma_{n+1})}{\alpha_n\alpha_{n+1}}(z-\mathfrak{c}_3) +c\\
&\quad+\frac{4a(1-\alpha^2)(\alpha +\alpha_n\gamma_n)}{\alpha_n\alpha_{n-1}}\mathfrak{c}_1\mathfrak{c}_2 \\
&=\frac{\alpha a}{\alpha_n \alpha_{n+1}}\left(z-\mathfrak{c}_3\right)^2 +\frac{b}{\alpha_{n+1}}\left(z-\mathfrak{c}_3\right)+c+\frac{4a(1-\alpha^2)\gamma_{n+1}}{\alpha_{n}}\mathfrak{c}_1\mathfrak{c}_2.
\end{align*}
Hence \eqref{Tn0-a} holds for all $n$. Similarly, from \eqref{Htn1-a}, and using again \eqref{gamma-n-bis}--\eqref{form-5} and the identity $\alpha_{n+1}\gamma_n(\alpha_n +\alpha\alpha_{n-1})+2\alpha^2 \alpha_n=\alpha_{n-1}\gamma_{n+1}(\alpha_{n+1}+\alpha\alpha_n)$, we get
\begin{align*}
(\mathrm{T}_{n+1,1}g)(z)&= \frac{a}{\alpha_n\alpha_{n-1}}\left(\frac{\gamma_n(\alpha_n +\alpha\alpha_{n-1})}{\alpha_n} +\frac{2\alpha^2}{\alpha_{n+1}}  \right)  (z-\mathfrak{c}_3)+\frac{b}{\alpha_n}\left(\gamma_n +\frac{1}{\alpha_{n+1}}  \right)\\
&=\frac{\gamma_{n+1}}{\alpha_{n+1}}\left( \frac{a(\alpha_{n+1} +\alpha \alpha_{n}) }{\alpha_{n} ^2}\,(z-\mathfrak{c}_3)
+b\right).
\end{align*}
Hence \eqref{Tn1-a} holds for $n=0,1,\ldots$.
Finally, from \eqref{Htn2-a} it is obvious that \eqref{Tn2-a} holds with $n$ replaced by $n+1$ and, consequently, it holds for all $n$. 
Therefore \eqref{Tn0-a}--\eqref{Tn2-a} hold and so \eqref{leibnizfor-degree-pi-2} is proved.
\end{proof}

\section{Classical OPS and Rodrigues formula}
\label{SecRodrigues}

This section concerns classical OPS on lattices and their associated regular functionals. 
We start by reviewing some basic definitions and then we derive a functional version of the Rodrigues formula. 

\begin{definition}\label{NUL-def}
Let $x(s)$ be a lattice given by \eqref{xs-def}. 
${\bf u}\in\mathcal{P}^*$ is called $x-$classical if it is regular and there exist nonzero polynomials
$\phi\in\mathcal{P}_2$ and $\psi\in\mathcal{P}_1$
such that
\begin{equation}\label{NUL-Pearson}
\mathbf{D}_x(\phi{\bf u})=\mathbf{S}_x(\psi{\bf u})\;.
\end{equation}
An OPS with respect to a $x-$classical functional will be called a $x-$classical OPS
(or a classical OPS on the lattice $x$).
\end{definition}
Definition \ref{NUL-def} appears in \cite{FK-NM2011}, and extends the definition of $\mathrm{D}-$classical functional (cf. Section \ref{introduction}). 
We will refer to (\ref{NUL-Pearson}) as {\it $x-$Geronimus--Pearson functional equation on the lattice $x$,} or, simply, {\it $x-$GP functional equation}.
As mentioned in the introductory section, our principal goal in this work is to state necessary and sufficient conditions,
involving only $\phi$ and $\psi$ (or, equivalently, their coefficients),
such that a given functional ${\bf u}\in\mathcal{P}^*$ satisfying the
$x-$GP functional equation (\ref{NUL-Pearson}) becomes regular.
In order to move on we need to introduce some notation and to prove some preliminary properties.

We denote by $P_n ^{[k]}$ the monic polynomial of degree $n$ defined by
\begin{align}
P_n ^{[k]} (z):=\frac{D_x ^k P_{n+k} (z)}{ \prod_{j=1} ^k \gamma_{n+j}} =\frac{\gamma_{n} !}{\gamma_{n+k} !} D_x ^k P_{n+k} (z) \quad (k,n=0,1,\ldots). \label{Pnkx}
\end{align}
Here, as usual, it is understood that $\mathrm{D}_x ^0 f=f $, empty product equals one, and 
$$\gamma_0 !:=1\;,\quad \gamma_{n+1} !:=\gamma_1 ...\gamma_n \gamma_{n+1} \quad(n=0,1,\ldots)\,.$$
\begin{definition}
Let $\phi\in\mathcal{P}_2$ and $\psi\in\mathcal{P}_1$.
$(\phi,\psi)$ is called an $x-$admissible pair if
$$
d_n\equiv d_n(\phi,\psi,x):=\mbox{$\frac12$}\,\gamma_n\,\phi^{\prime\prime}+\alpha_n\psi'\neq0 \quad (n=0,1,\ldots).
$$
\end{definition}
This is an analogous for lattices of the corresponding definitions for the $\mathrm{D}-$classical and $(q,\omega)-$classical  cases (cf. \cite{M1991,MP1994,RKDP2020}).

\subsection{Preliminary properties}

Following \cite{FK-NM2011}, given ${\bf u}\in\mathcal{P}^*$, $\phi\in\mathcal{P}_2$, and $\psi\in\mathcal{P}_1$,
we define recursively polynomials $\phi^{[k]}\in\mathcal{P}_2$ and $\psi^{[k]}\in\mathcal{P}_1$ by
\begin{align}
& \phi^{[0]}:=\phi\;,\quad \psi^{[0]}:=\psi\;, \label{def-phi0psi0}\\
& \phi^{[k+1]}:=\mathrm{S}_x\phi^{[k]}+\texttt{U}_1\mathrm{S}_x\psi^{[k]}+\alpha\texttt{U}_2\mathrm{D}_x\psi^{[k]}\;, \label{def-phik}\\
& \psi^{[k+1]}:=\mathrm{D}_x\phi^{[k]}+\alpha\mathrm{S}_x\psi^{[k]}+\texttt{U}_1\mathrm{D}_x\psi^{[k]}\;, \label{def-psik}
\end{align}
and functionals ${\bf u}^{[k]}\in\mathcal{P}^*$ by
\begin{equation}\label{uk-func-Dx}
{\bf u}^{[0]}:={\bf u}\;,\quad
{\bf u}^{[k+1]}:=\mathbf{D}_x\big(\texttt{U}_2\psi^{[k]}{\bf u}^{[k]}\big)-\mathbf{S}_x\big(\phi^{[k]}{\bf u}^{[k]}\big)
\end{equation}
($k=0,1,\ldots$).
${\bf u}^{[k]}$ may be seen as the higher order $x-$derivative of ${\bf u}$.
Next, we provide explicit representations for the polynomials $\phi^{[k]}$ and $\psi^{[k]}$.

\begin{proposition}
Consider the lattice $x(s):=\mathfrak{c}_1 q^{-s} +\mathfrak{c}_2 q^s +\mathfrak{c}_3$.
Let $\phi\in\mathcal{P}_2$ and $\psi\in\mathcal{P}_1$, so there are $a,b,c,d,e\in\mathbb{C}$ such that
$$\phi(z)=az^2+bz+c\;,\quad \psi(z)=dz+e\,.$$ 
Then the polynomials $\phi^{[k]}$ and $\psi^{[k]}$
defined by \eqref{def-phi0psi0}--\eqref{def-psik} are given by 
\begin{align}
\psi^{[k]}(z)&=\big(a\gamma_{2k}+d\alpha_{2k}\big)(z-\mathfrak{c}_3)
+\phi'(\mathfrak{c}_3)\gamma_k+\psi(\mathfrak{c}_3)\alpha_k\;, \label{psi-explicit}\\
\phi^{[k]}(z)&=\big(d(\alpha^2-1)\gamma_{2k}+a\alpha_{2k}\big)
\big((z-\mathfrak{c}_3)^2-2\mathfrak{c}_1\mathfrak{c}_2\big)  \label{phi-explicit} \\
&\quad +\big(\phi'(\mathfrak{c}_3)\alpha_k+\psi(\mathfrak{c}_3)(\alpha^2-1)\gamma_k\big)(z-\mathfrak{c}_3)
+ \phi(\mathfrak{c}_3)+2a\mathfrak{c}_1\mathfrak{c}_2, \nonumber
\end{align}
for each $k=0,1,2\ldots$.
\end{proposition}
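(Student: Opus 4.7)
The plan is to prove the two formulas by a single induction on $k$, after first rewriting everything in the shifted variable $w:=z-\mathfrak{c}_3$. This shift is natural because, in the $q\neq1$ regime of the lattice, the formulas \eqref{U1-simples-bis} and \eqref{U2-simples-bis} simplify to $\texttt{U}_1=(\alpha^2-1)w$ and $\texttt{U}_2=(\alpha^2-1)(w^2-4\mathfrak{c}_1\mathfrak{c}_2)$, and the elementary actions of $\mathrm{D}_x$ and $\mathrm{S}_x$ become very clean: $\mathrm{D}_xw=1$, $\mathrm{S}_xw=\alpha w$, $\mathrm{D}_x(w^2)=2\alpha w$, and $\mathrm{S}_x(w^2)=(2\alpha^2-1)w^2+4(1-\alpha^2)\mathfrak{c}_1\mathfrak{c}_2$ (the last two identities appear in the proof of Corollary \ref{LeibnizCor1}). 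With these ingredients the recursions \eqref{def-phik}--\eqref{def-psik} become finite-degree linear maps on the coefficients of $\phi^{[k]}$ and $\psi^{[k]}$ in the basis $\{1,w,w^2-2\mathfrak{c}_1\mathfrak{c}_2\}$.

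For the base case $k=0$, the Taylor expansion $\phi(z)=a(z-\mathfrak{c}_3)^2+\phi'(\mathfrak{c}_3)(z-\mathfrak{c}_3)+\phi(\mathfrak{c}_3)$ together with $\alpha_0=1$, $\gamma_0=0$, $\alpha_{2\cdot 0}=1$, $\gamma_{2\cdot 0}=0$ reduces the right-hand sides to $\phi$ and $\psi$ respectively, as required. For the inductive step, I would write $\psi^{[k]}=A_kw+B_k$ and $\phi^{[k]}=C_k(w^2-2\mathfrak{c}_1\mathfrak{c}_2)+E_kw+F_k$ with the closed forms
$A_k=a\gamma_{2k}+d\alpha_{2k}$, $B_k=\phi'(\mathfrak{c}_3)\gamma_k+\psi(\mathfrak{c}_3)\alpha_k$, $C_k=d(\alpha^2-1)\gamma_{2k}+a\alpha_{2k}$, $E_k=\phi'(\mathfrak{c}_3)\alpha_k+\psi(\mathfrak{c}_3)(\alpha^2-1)\gamma_k$ and $F_k=\phi(\mathfrak{c}_3)+2a\mathfrak{c}_1\mathfrak{c}_2$, plug into \eqref{def-phik}--\eqref{def-psik}, and read off the resulting linear relations
\begin{align*}
A_{k+1}&=(2\alpha^2-1)A_k+2\alpha C_k, & B_{k+1}&=\alpha B_k+E_k,\\
C_{k+1}&=(2\alpha^2-1)C_k+2\alpha(\alpha^2-1)A_k, & E_{k+1}&=\alpha E_k+(\alpha^2-1)B_k,\\
F_{k+1}&=F_k.
\end{align*}

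The only nontrivial matter is then verifying that the claimed closed forms satisfy these four nontrivial recurrences. This comes down to the addition formulas
\[
\alpha_{m+n}=\alpha_m\alpha_n+(\alpha^2-1)\gamma_m\gamma_n,\qquad \gamma_{m+n}=\alpha_m\gamma_n+\alpha_n\gamma_m,
\]
specialized to $(m,n)=(2,2k)$ for the $A,C$ recurrences and $(m,n)=(1,k)$ for the $B,E$ recurrences (noting $\alpha_1=\alpha$, $\gamma_1=1$, $\alpha_2=2\alpha^2-1$, $\gamma_2=2\alpha$). These addition identities are immediate from the explicit solutions \eqref{alpha-n}, \eqref{gamma-n} (they are just the hyperbolic cosine and sine addition formulas under the substitution $q=\mathrm{e}^{2\theta}$); alternatively, one may deduce them directly from the linear recurrences \eqref{1.2} and \eqref{gamma-n-bis} by a short induction.

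The main obstacle is purely organizational: keeping straight the five coefficients across the two recursions without arithmetic errors. Using the basis $\{1,w,w^2-2\mathfrak{c}_1\mathfrak{c}_2\}$ rather than $\{1,w,w^2\}$ absorbs the $\mathfrak{c}_1\mathfrak{c}_2$ contributions coming from $\mathrm{S}_x(w^2)$ and from $\texttt{U}_2$, which is exactly what makes the constant term $F_k$ visibly $k$-invariant and the whole computation transparent.
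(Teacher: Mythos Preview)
Your argument is correct and follows the same overall strategy as the paper---derive first-order recurrences for the coefficients of $\phi^{[k]}$ and $\psi^{[k]}$ from \eqref{def-phik}--\eqref{def-psik}, then verify the closed forms by induction on $k$. The difference is in the decomposition: the paper works in the monomial basis $\{1,z,z^2\}$, obtaining the coupled five-term system \eqref{EqSy1}--\eqref{EqSy4} (with $\beta=(1-\alpha)\mathfrak{c}_3$ and $\widehat v_2$ scattered throughout) and then records the solution \eqref{solEqSy1}--\eqref{solEqSy3} to be checked by induction. Your shift $w=z-\mathfrak{c}_3$ together with the basis $\{1,w,w^2-2\mathfrak{c}_1\mathfrak{c}_2\}$ diagonalizes the action of $\mathrm{S}_x$ on the quadratic part and absorbs the $\mathfrak{c}_1\mathfrak{c}_2$ contributions from $\mathrm{S}_x(w^2)$ and $\texttt{U}_2$; this collapses the system to two decoupled $2\times2$ blocks $(A_k,C_k)$ and $(B_k,E_k)$ plus the invariant $F_k$, and the inductive step then becomes literally the addition laws $\alpha_{m+n}=\alpha_m\alpha_n+(\alpha^2-1)\gamma_m\gamma_n$, $\gamma_{m+n}=\alpha_m\gamma_n+\alpha_n\gamma_m$ at $m=2$ and $m=1$. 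The paper's route has the advantage of producing the explicit coefficients $a^{[k]},b^{[k]},c^{[k]},d^{[k]},e^{[k]}$ in the original coordinates (used later, e.g.\ in \eqref{an-app}--\eqref{bn-app}), whereas your route is shorter and more transparent but would require translating back if those individual coefficients were needed.
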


\begin{proof}
Set
\begin{equation}\label{phikpsik}
\phi^{[k]}(z)=a^{[k]}z^2+b^{[k]}z+c^{[k]}\;,\quad \psi^{[k]}(z)=d^{[k]}z+e^{[k]} \;,
\end{equation}
where $a^{[k]},b^{[k]},c^{[k]},d^{[k]},e^{[k]}\in\mathbb{C}$. Clearly, by \eqref{def-phi0psi0},
$$
a^{[0]}=a\;,\quad b^{[0]}=b\;,\quad c^{[0]}=c\;,\quad d^{[0]}=d\;,\quad e^{[0]}=e\;.\quad
$$
In order to determine the coefficients $a^{[k]}$, $b^{[k]}$, $c^{[k]}$, $d^{[k]}$, and $e^{[k]}$ for each $k=1,2,\ldots$, we proceed as follows. Firstly we replace in \eqref{def-phik} and in \eqref{def-psik} the expressions of $\phi^{[k]}$, $\phi^{[k+1]}$, $\psi^{[k]}$, and $\psi^{[k+1]}$ given by \eqref{phikpsik}; and then, in the two resulting identities,
using \eqref{Dx-xn} together with \eqref{U1x} and \eqref{U2x}, 
after identification of the coefficients of the polynomials appearing in both sides of each of those identities, we obtain a system with five difference equations, namely
\begin{align}
& a^{[k+1]}=(2\alpha^2-1)a^{[k]}+2\alpha(\alpha^2-1)d^{[k]}\,,  \label{EqSy1} \\
& b^{[k+1]}=\alpha b^{[k]} +(\alpha^2-1)e^{[k]}+2\beta(2\alpha+1)a^{[k]} +\beta(\alpha+1)(4\alpha-1) d^{[k]}\,, \label{EqSy3} \\
& c^{[k+1]}=c^{[k]} +\widehat{v}_2 a^{[k]}+\beta b^{[k]}+\beta(\alpha+1) e^{[k]}
+\big(\beta^2(\alpha+1)+\alpha\delta\big)d^{[k]}\,, \label{EqSy5} \\
& d^{[k+1]}=2\alpha a^{[k]} +(2\alpha^2-1)d^{[k]} \,,\label{EqSy2} \\
& e^{[k+1]}=b^{[k]} +\alpha e^{[k]}+2\beta a^{[k]}+\beta(2\alpha+1) d^{[k]} \label{EqSy4}
\end{align}
($k=0,1,\ldots$).
The explicit solution of this system is 
\begin{align}
a^{[k]} &=d(\alpha^2-1)\gamma_{2k}+a\alpha_{2k}\;,  \label{solEqSy1} \\
b^{[k]} &=\psi(\mathfrak{c}_3)(\alpha^2-1)\gamma_k+\phi'(\mathfrak{c}_3)\alpha_k
-2\mathfrak{c}_3\big(d(\alpha^2-1)\gamma_{2k}+a\alpha_{2k}\big)\;,\label{solEqSy1a} \\
c^{[k]} &=\phi(\mathfrak{c}_3)+2a\mathfrak{c}_1\mathfrak{c}_2
-\mathfrak{c}_3\big(\psi(\mathfrak{c}_3)(\alpha^2-1)\gamma_k+\phi'(\mathfrak{c}_3)\alpha_k\big) \\
&\quad+(\mathfrak{c}_3^2-2\mathfrak{c}_1\mathfrak{c}_2)\big(d(\alpha^2-1)\gamma_{2k}+a\alpha_{2k}\big)  \;, \nonumber \\
d^{[k]} &=a\gamma_{2k}+d\alpha_{2k}\;, \label{solEqSy2} \\
e^{[k]} &=\phi'(\mathfrak{c}_3)\gamma_k+\psi(\mathfrak{c}_3)\alpha_k
-\mathfrak{c}_3\big(a\gamma_{2k}+d\alpha_{2k}\big) \,.\label{solEqSy3}
\end{align}
This can be easily proved by induction on $k$. The representations (\ref{psi-explicit})--(\ref{phi-explicit}) are obtained by inserting (\ref{solEqSy1})--(\ref{solEqSy3}) into (\ref{phikpsik}).
\end{proof}

Lemma \ref{FuncEq-uk} in bellow is proved in \cite{FK-NM2011}. We point out that the statement of the result given in \cite{FK-NM2011} assumes {\it a priori} that ${\bf u}$ is a regular functional. However, inspection of the proof given therein shows that the result remains unchanged without such assumption.

\begin{lemma}\label{FuncEq-uk}
Let ${\bf u}\in\mathcal{P}^*$.
Suppose that there exist $\phi\in\mathcal{P}_2$ and $\psi\in\mathcal{P}_1$ such that
\eqref{NUL-Pearson} holds.
Then ${\bf u}^{[k]}$ satisfies the functional equation
\begin{equation}\label{uk-funcEq}
\mathbf{D}_x\big(\phi^{[k]}{\bf u}^{[k]}\big)=\mathbf{S}_x\big(\psi^{[k]}{\bf u}^{[k]}\big) \quad (k=0,1,\ldots).
\end{equation}
\end{lemma}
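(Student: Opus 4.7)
The plan is to argue by induction on $k$. The base case $k=0$ is precisely the hypothesis \eqref{NUL-Pearson}. For the inductive step, assume that \eqref{uk-funcEq} holds at level $k$, and the goal is to derive the analogous identity at level $k+1$, namely
$$
\mathbf{D}_x\bigl(\phi^{[k+1]}{\bf u}^{[k+1]}\bigr)=\mathbf{S}_x\bigl(\psi^{[k+1]}{\bf u}^{[k+1]}\bigr),
$$
working strictly from the definitions \eqref{def-phik}--\eqref{uk-func-Dx} together with the operator identities collected in Lemma \ref{propertyDxSx}.

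First, I would substitute the definition \eqref{uk-func-Dx} of ${\bf u}^{[k+1]}$ into both sides, so that $\phi^{[k+1]}{\bf u}^{[k+1]}$ and $\psi^{[k+1]}{\bf u}^{[k+1]}$ become linear combinations of objects of the form $f\,\mathbf{D}_x({\bf v})$ and $f\,\mathbf{S}_x({\bf v})$ with ${\bf v}\in\{\phi^{[k]}{\bf u}^{[k]},\,\texttt{U}_2\psi^{[k]}{\bf u}^{[k]}\}$. I would then apply $\mathbf{D}_x$ on the left-hand side and $\mathbf{S}_x$ on the right-hand side by means of the functional Leibniz-type identities \eqref{a} and \eqref{b}, producing expressions that involve $\mathbf{D}_x^2$, $\mathbf{S}_x\mathbf{D}_x$, $\mathbf{D}_x\mathbf{S}_x$, and $\mathbf{S}_x^2$ applied to $\phi^{[k]}{\bf u}^{[k]}$ and $\texttt{U}_2\psi^{[k]}{\bf u}^{[k]}$.

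The next step is to eliminate the mixed and second-order operators. The key tools here are \eqref{2aa}--\eqref{3aa}, which express $\mathbf{D}_x^2(\texttt{U}_2\cdot)$, $\mathbf{D}_x\mathbf{S}_x$, and $\mathbf{S}_x^2$ in terms of each other modulo the functional itself and $\texttt{U}_1\mathbf{S}_x$, $\texttt{U}_1\mathbf{D}_x$. After this bookkeeping, both sides can be reorganized as polynomial coefficients (built from $\mathrm{S}_x\phi^{[k]}$, $\mathrm{D}_x\phi^{[k]}$, $\mathrm{S}_x\psi^{[k]}$, $\mathrm{D}_x\psi^{[k]}$, $\texttt{U}_1$, $\texttt{U}_2$) multiplying the functionals $\mathbf{D}_x(\phi^{[k]}{\bf u}^{[k]})$ and $\mathbf{S}_x(\psi^{[k]}{\bf u}^{[k]})$, plus terms of the form $g\,{\bf u}^{[k]}$ with $g\in\mathcal{P}$. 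The induction hypothesis allows the first two functionals to be identified, and the coefficient of the leftover $g\,{\bf u}^{[k]}$ must then collapse to zero; the defining recursions \eqref{def-phik}--\eqref{def-psik} for $\phi^{[k+1]}$ and $\psi^{[k+1]}$ are precisely calibrated so that this cancellation occurs, which is exactly why those recursions take the shape they do.

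The main obstacle is purely computational: the number of terms generated by \eqref{a}--\eqref{b} and the second-order identities \eqref{2aa}--\eqref{3aa} is large, and one must repeatedly use \eqref{DxSxU1}--\eqref{DxSxU2} to rewrite $\mathrm{D}_x\texttt{U}_1$, $\mathrm{S}_x\texttt{U}_1$, $\mathrm{D}_x\texttt{U}_2$, $\mathrm{S}_x\texttt{U}_2$ while carefully tracking factors of $\alpha$ and $\alpha^{-1}$. One tidy way to organize the bookkeeping is to test both sides against an arbitrary $f\in\mathcal{P}$ and transfer everything to the polynomial side, where the product rules \eqref{1.3}--\eqref{c} play the role dual to \eqref{a}--\eqref{2bb}; the identity to be verified then becomes a polynomial identity in $\phi^{[k]}$, $\psi^{[k]}$, $\texttt{U}_1$, $\texttt{U}_2$, which can be reduced to the defining recursions \eqref{def-phik}--\eqref{def-psik}. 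Once this is done, \eqref{uk-funcEq} propagates from $k$ to $k+1$, completing the induction.
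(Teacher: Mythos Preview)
The paper does not actually prove this lemma: it simply cites \cite{FK-NM2011}, noting that the regularity assumption made there is inessential. So there is no in-paper argument to compare your proposal against in detail.

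That said, your inductive strategy is exactly the natural one, and it is the approach used in the cited reference. The recursions \eqref{def-phik}--\eqref{def-psik} for $\phi^{[k+1]}$ and $\psi^{[k+1]}$ are indeed reverse-engineered precisely so that the step from level $k$ to level $k+1$ goes through once one unwinds ${\bf u}^{[k+1]}$ via \eqref{uk-func-Dx} and applies the operator identities of Lemma~\ref{propertyDxSx}. Your remark that the computation can be carried out dually by pairing against an arbitrary $f\in\mathcal{P}$ is also correct and is often the cleanest way to keep track of the terms.

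The only caveat is that what you have written is a plan rather than a proof: the crucial cancellation you describe (``the coefficient of the leftover $g\,{\bf u}^{[k]}$ must then collapse to zero'') is asserted rather than verified. If you want this to stand as a self-contained proof, you would need to carry out at least one pass of the bookkeeping explicitly and exhibit that the polynomial coefficients on both sides match after invoking \eqref{def-phik}--\eqref{def-psik}. As a sketch of the argument, however, your proposal is correct and aligned with the literature.
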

The next result gives some additional functional equations fulfilled by ${\bf u}^{[k]}$.
\begin{lemma}\label{lemmaA}
Let ${\bf u} \in \mathcal{P}^*$ and suppose that there exist $\phi\in\mathcal{P}_2$ and $\psi\in\mathcal{P}_1$ such that ${\bf u}$ satisfies the $x-$GP functional equation  \eqref{NUL-Pearson}. Then the relations
\begin{align}
\mathbf{D}_x\big({\bf u}^{[k+1]}\big)
&=-\alpha\psi^{[k]}{\bf u}^{[k]}\;, \label{lemmaA1} \\
\mathbf{S}_x\big({\bf u}^{[k+1]}\big)
&=-\alpha\big(\alpha\phi^{[k]}+\mbox{\rm $\texttt{U}$}_1\psi^{[k]}\big){\bf u}^{[k]}\;, \label{lemmaA2} \\
2\mbox{\rm $\texttt{U}$}_1{\bf u}^{[k+1]}
&=\mathbf{S}_x\big(\mbox{\rm $\texttt{U}$}_2\psi^{[k]}{\bf u}^{[k]}\big)
-\mathbf{D}_x\big(\mbox{\rm $\texttt{U}$}_2\phi^{[k]}{\bf u}^{[k]}\big) \label{lemmaA3}
\end{align}
hold for each $k=0,1,\ldots$.
\end{lemma}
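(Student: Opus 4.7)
The plan is to derive the three identities in the order \eqref{lemmaA1}, \eqref{lemmaA3}, \eqref{lemmaA2}, starting from the definition \eqref{uk-func-Dx} together with the key relation $\mathbf{D}_x(\phi^{[k]}{\bf u}^{[k]}) = \mathbf{S}_x(\psi^{[k]}{\bf u}^{[k]})$ provided by Lemma \ref{FuncEq-uk}. To streamline the bookkeeping I set ${\bf v}:=\phi^{[k]}{\bf u}^{[k]}$ and ${\bf w}:=\psi^{[k]}{\bf u}^{[k]}$, so that ${\bf u}^{[k+1]} = \mathbf{D}_x(\texttt{U}_2{\bf w}) - \mathbf{S}_x{\bf v}$ and $\mathbf{D}_x{\bf v} = \mathbf{S}_x{\bf w}$.

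For \eqref{lemmaA1} I apply $\mathbf{D}_x$ to the definition of ${\bf u}^{[k+1]}$, expand $\mathbf{D}_x^2(\texttt{U}_2{\bf w})$ via \eqref{2aa} and $\mathbf{D}_x\mathbf{S}_x{\bf v}$ via \eqref{3aa}, and then substitute $\mathbf{D}_x{\bf v}=\mathbf{S}_x{\bf w}$. The terms $\alpha\mathbf{S}_x^2{\bf w}$ and $\mathbf{D}_x(\texttt{U}_1\mathbf{S}_x{\bf w})$ then appear with opposite signs and cancel, leaving exactly $-\alpha{\bf w}=-\alpha\psi^{[k]}{\bf u}^{[k]}$.

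For \eqref{lemmaA3} I first specialise \eqref{a} and \eqref{b} to $f=\texttt{U}_2$, using \eqref{DxSxU2}, to obtain the two auxiliary identities $\mathbf{D}_x(\texttt{U}_2{\bf U})=(\alpha^2\texttt{U}_2-\texttt{U}_1^2)\mathbf{D}_x{\bf U}+2\texttt{U}_1\mathbf{S}_x{\bf U}$ and $\mathbf{S}_x(\texttt{U}_2{\bf U})=(\alpha^2\texttt{U}_2-\texttt{U}_1^2)\mathbf{S}_x{\bf U}+2\texttt{U}_1\mathbf{D}_x(\texttt{U}_2{\bf U})$, valid for every ${\bf U}\in\mathcal{P}^*$. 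Applying the second with ${\bf U}={\bf w}$ and the first with ${\bf U}={\bf v}$ and subtracting, the factor $(\alpha^2\texttt{U}_2-\texttt{U}_1^2)$ multiplies $\mathbf{S}_x{\bf w}-\mathbf{D}_x{\bf v}=0$, so that $\mathbf{S}_x(\texttt{U}_2{\bf w}) - \mathbf{D}_x(\texttt{U}_2{\bf v}) = 2\texttt{U}_1[\mathbf{D}_x(\texttt{U}_2{\bf w}) - \mathbf{S}_x{\bf v}] = 2\texttt{U}_1{\bf u}^{[k+1]}$.

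The main obstacle is \eqref{lemmaA2}, since applying $\mathbf{S}_x$ to the definition creates the awkward mixed expressions $\mathbf{S}_x\mathbf{D}_x(\texttt{U}_2{\bf w})$ and $\mathbf{S}_x^2{\bf v}$. I multiply through by $\alpha$ and rewrite these using \eqref{3aa} with ${\bf u}\to\texttt{U}_2{\bf w}$ and \eqref{2aa} with ${\bf u}\to{\bf v}$, respectively; after grouping, the right-hand side becomes $\mathbf{D}_x[\mathbf{S}_x(\texttt{U}_2{\bf w})-\mathbf{D}_x(\texttt{U}_2{\bf v})] - \mathbf{D}_x[\texttt{U}_1(\mathbf{D}_x(\texttt{U}_2{\bf w})-\mathbf{S}_x{\bf v})] - \alpha{\bf v}$, which by \eqref{lemmaA3} and the definition of ${\bf u}^{[k+1]}$ collapses to $\mathbf{D}_x(\texttt{U}_1{\bf u}^{[k+1]}) - \alpha{\bf v}$. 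A final call to \eqref{a} with $f=\texttt{U}_1$, together with \eqref{DxSxU1}, gives $\mathbf{D}_x(\texttt{U}_1{\bf u}^{[k+1]}) = \alpha^{-1}\texttt{U}_1\mathbf{D}_x{\bf u}^{[k+1]} + \alpha^{-1}(\alpha^2-1)\mathbf{S}_x{\bf u}^{[k+1]}$; solving the resulting equation for $\mathbf{S}_x{\bf u}^{[k+1]}$ and substituting \eqref{lemmaA1} then yields \eqref{lemmaA2}.
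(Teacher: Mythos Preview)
Your proof is correct and follows essentially the same strategy as the paper: both prove the identities in the order \eqref{lemmaA1}, \eqref{lemmaA3}, \eqref{lemmaA2}, relying on the same toolbox \eqref{a}--\eqref{3aa} together with $\mathbf{D}_x{\bf v}=\mathbf{S}_x{\bf w}$ from Lemma~\ref{FuncEq-uk}. The only cosmetic difference is in \eqref{lemmaA2}: the paper derives two auxiliary identities for $(2\alpha^2-1)\mathbf{S}_x{\bf u}^{[k+1]}$ and $(2\alpha^2-2)\mathbf{S}_x{\bf u}^{[k+1]}$ and subtracts, whereas your grouping via \eqref{3aa} and \eqref{2aa} arrives at $\alpha\mathbf{S}_x{\bf u}^{[k+1]}=\mathbf{D}_x(\texttt{U}_1{\bf u}^{[k+1]})-\alpha{\bf v}$ in one step; the remaining use of \eqref{a} with $f=\texttt{U}_1$ and \eqref{lemmaA1} is identical.
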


\begin{proof}
Using \eqref{2bb} and \eqref{uk-funcEq}, we deduce 
\begin{align*}
\mathbf{D}_x^2\big(\texttt{U}_2\psi^{[k]}{\bf u}^{[k]}\big)
&=(2\alpha-\alpha^{-1}) \mathbf{S}_x^2 \big(\psi^{[k]}{\bf u}^{[k]}\big)
+\alpha^{-1}\texttt{U}_1 \mathbf{D}_x \mathbf{S}_x \big(\psi^{[k]}{\bf u}^{[k]}\big)
-\alpha \psi^{[k]}{\bf u}^{[k]} \\
&=(2\alpha-\alpha^{-1}) \mathbf{S}_x\mathbf{D}_x \big(\phi^{[k]}{\bf u}^{[k]}\big)
+\alpha^{-1}\texttt{U}_1 \mathbf{D}_x^2 \big(\phi^{[k]}{\bf u}^{[k]}\big)
-\alpha \psi^{[k]}{\bf u}^{[k]} \\
&=\mathbf{D}_x\mathbf{S}_x\big(\phi^{[k]}{\bf u}^{[k]}\big)-\alpha \psi^{[k]}{\bf u}^{[k]}\;,
\end{align*}
where the last equality follows from \eqref{33} for $n=1$
and taking into account that $\alpha_2=2\alpha^2-1$ and $\gamma_1=1$.
Therefore, by the definition of ${\bf u}^{[k+1]}$, we obtain
$$
\mathbf{D}_x {\bf u}^{[k+1]}
=\mathbf{D}_x^2\big(\texttt{U}_2\psi^{[k]}{\bf u}^{[k]}\big)
-\mathbf{D}_x\mathbf{S}_x\big(\phi^{[k]}{\bf u}^{[k]}\big)
=-\alpha \psi^{[k]}{\bf u}^{[k]}\;.
$$
This proves \eqref{lemmaA1}. Next,
by \eqref{a} and \eqref{b}, we may write
\begin{align*}
\mathbf{D}_x\big(\texttt{U}_2\phi^{[k]}{\bf u}^{[k]}\big)
&=\left(\mathrm{S}_x \texttt{U}_2 -\alpha^{-1}\texttt{U}_1\mathrm{D}_x \texttt{U}_2\right)
\mathbf{D}_x\big(\phi^{[k]}{\bf u}^{[k]}\big) +
\alpha^{-1} \big(\mathrm{D}_x \texttt{U}_2\big) \mathbf{S}_x\big(\phi^{[k]}{\bf u}^{[k]}\big)\;, \\
\mathbf{S}_x\big(\texttt{U}_2\psi^{[k]}{\bf u}^{[k]}\big)
&=\left(\mathrm{S}_x \texttt{U}_2 -\alpha^{-1}\texttt{U}_1\mathrm{D}_x \texttt{U}_2\right)
\mathbf{S}_x\big(\psi^{[k]}{\bf u}^{[k]}\big) + \alpha^{-1} \big(\mathrm{D}_x \texttt{U}_2\big)
\mathbf{D}_x\big(\texttt{U}_2 \psi^{[k]}{\bf u}^{[k]}\big) \,.
\end{align*}
After subtracting these two equalities and taking into account \eqref{uk-funcEq}, as well as
the relation $\alpha^{-1} \mathrm{D}_x \texttt{U}_2=2\texttt{U}_1$ (cf. \eqref{DxSxU2}), we obtain \eqref{lemmaA3}.
To prove \eqref{lemmaA2}, note first that, by the definition of ${\bf u}^{[k+1]}$,
\begin{equation}\label{EqLemmaAeq1}
\alpha_2 \mathbf{S}_x {\bf u}^{[k+1]}
=\alpha_2\mathbf{S}_x\mathbf{D}_x\big(\texttt{U}_2\psi^{[k]}{\bf u}^{[k]}\big)
-\alpha_2\mathbf{S}_x^2\big(\phi^{[k]}{\bf u}^{[k]}\big) \;.
\end{equation}
Using again \eqref{33} for $n=1$, we have 
$$
\alpha_2\mathbf{S}_x\mathbf{D}_x\big(\texttt{U}_2\psi^{[k]}{\bf u}^{[k]}\big)
=\alpha\mathbf{D}_x\mathbf{S}_x\big(\texttt{U}_2\psi^{[k]}{\bf u}^{[k]}\big)
-\texttt{U}_1\mathbf{D}_x^2\big(\texttt{U}_2\psi^{[k]}{\bf u}^{[k]}\big)
$$
and by \eqref{2bb}, we also have
$$
\alpha_2\mathbf{S}_x^2\big(\phi^{[k]}{\bf u}^{[k]}\big)
=-\texttt{U}_1\mathbf{D}_x\mathbf{S}_x\big(\phi^{[k]}{\bf u}^{[k]}\big)
+\alpha^2\phi^{[k]}{\bf u}^{[k]}
+\alpha\mathbf{D}_x^2\big(\texttt{U}_2\phi^{[k]}{\bf u}^{[k]}\big)\;.
$$
Substituting these two expressions into the right-hand side of \eqref{EqLemmaAeq1}, we get
\begin{align}\label{eqq1AA}
&(2\alpha^2-1)\mathbf{S}_x {\bf u}^{[k+1]} \\
&\quad=\alpha\mathbf{D}_x\mathbf{S}_x\big(\texttt{U}_2\psi^{[k]}{\bf u}^{[k]}\big)
-\alpha\mathbf{D}_x^2\big(\texttt{U}_2\phi^{[k]}{\bf u}^{[k]}\big)
-\texttt{U}_1\mathbf{D}_x{\bf u}^{[k+1]} -\alpha^2\phi^{[k]}{\bf u}^{[k]}\;. \nonumber
\end{align}
Next, by taking $f=\texttt{U}_1$ and replacing ${\bf u}$ by ${\bf u}^{[k+1]}$ in \eqref{a},
and then using \eqref{DxSxU1} and \eqref{lemmaA1}, we derive
$$
\mathbf{D}_x\big(\texttt{U}_1{\bf u}^{[k+1]}\big)
=\texttt{U}_1 \psi^{[k]}{\bf u}^{[k]}+(\alpha-\alpha^{-1})\mathbf{S}_x {\bf u}^{[k+1]}\;.
$$
Multiplying both sides of this equality by $2\alpha$ and
combining the resulting equality with the one obtained by applying $\mathbf{D}_x$ to both sides of \eqref{lemmaA3}, we deduce
\begin{align}
(2\alpha^2-2)\mathbf{S}_x {\bf u}^{[k+1]}
=\alpha\mathbf{D}_x\mathbf{S}_x\big(\texttt{U}_2\psi^{[k]}{\bf u}^{[k]}\big)
-\alpha\mathbf{D}_x^2\big(\texttt{U}_2\phi^{[k]}{\bf u}^{[k]}\big)
+2\alpha\texttt{U}_1\psi^{[k]}{\bf u}^{[k]} \;. \label{eqq1AB}
\end{align}
Finally, subtracting \eqref{eqq1AB} to \eqref{eqq1AA},
and taking into account \eqref{lemmaA1}, \eqref{lemmaA2} follows.
\end{proof}

\subsection{A functional Rodrigues formula}

Here we prove a functional version of the Rodrigues formula on lattices, extending results stated in \cite{MP1994,RKDP2020}.

\begin{theorem}[Rodrigues' formula]\label{uk-is-classical}
Consider the lattice
$$x(s):=\mathfrak{c}_1 q^{-s} +\mathfrak{c}_2 q^s +\mathfrak{c}_3\;.$$ 
Let ${\bf u}\in\mathcal{P}^*$ and suppose that there exists a $x-$admissible pair $(\phi,\psi)$ such that
${\bf u}$ fulfills the $x-GP$ functional equation \eqref{NUL-Pearson}.
Set
\begin{equation}\label{dnen-Prop}
d_n:=\mbox{$\frac{1}{2}\,$}\phi''\gamma_n+\psi'\alpha_n\;,\quad
e_n:=\phi'(\mathfrak{c}_3)\gamma_n+\psi(\mathfrak{c}_3)\alpha_n \quad(n=0,1,\ldots).
\end{equation}
Then 
\begin{align}\label{roformula}
R_n {\bf u} = \mathbf{D}_x^n {\bf u}^{[n]} \quad (n=0,1,\ldots)\,,
\end{align}
where ${\bf u}^{[n]}$ is the functional on $\mathcal{P}$ defined by \eqref{uk-func-Dx} and $(R_n)_{n\geq0}$ is a simple set of polynomials given by the TTRR
\begin{align}\label{Rn-Prop1} 
R_{n+1}(z)=(a_n z-s_n)R_{n}(z)- t_n R_{n-1} (z) \quad(n=0,1,\ldots)\;, 
\end{align}
with initial conditions $R_{-1}=0$ and $R_0=1$,
and $(a_n)_{n\geq0}$, $(s_n)_{n\geq0}$, and $(t_n)_{n\geq1}$
are sequences of complex numbers defined by
\begin{align}
&a_n:=-\frac{\alpha\,d_{2n}d_{2n-1}}{d_{n-1}}\; , \label{rn-Prop1}\\
&s_n:=a_n \left( \mathfrak{c}_3+\frac{\gamma_n e_{n-1}}{d_{2n-2}}
-\frac{\gamma_{n+1} e_{n}}{d_{2n}}\right)\;  , \label{sn-Prop1}\\
&t_n:=a_n\,\frac{\alpha\,\gamma_n d_{2n-2}}{d_{2n-1}}\phi^{[n-1]}\left(\mathfrak{c}_3 -\frac{e_{n-1}}{d_{2n-2}}\right)\;,\label{tn-Prop1}
\end{align}
$\phi^{[n-1]}$ being given by \eqref{phi-explicit}.
(It is understood that $a_0:=-\alpha d$ and $s_0:=\alpha e$.)
\end{theorem}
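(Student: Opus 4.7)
My plan is to prove the formula $R_n{\bf u}=\mathbf{D}_x^n{\bf u}^{[n]}$ by induction on $n$, simultaneously identifying $R_n$ as a polynomial and verifying the three-term recurrence \eqref{Rn-Prop1} with the prescribed coefficients. The base cases are immediate: for $n=0$ one has ${\bf u}^{[0]}={\bf u}$, so $R_0=1$; and by \eqref{lemmaA1} with $k=0$,
$$
\mathbf{D}_x{\bf u}^{[1]}=-\alpha\psi^{[0]}{\bf u}=-\alpha(dz+e){\bf u}=(a_0z-s_0){\bf u},
$$
so $R_1=a_0z-s_0$, matching \eqref{rn-Prop1}--\eqref{sn-Prop1} at $n=0$.

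For the inductive step, assume $R_{n-1}{\bf u}=\mathbf{D}_x^{n-1}{\bf u}^{[n-1]}$ and $R_n{\bf u}=\mathbf{D}_x^n{\bf u}^{[n]}$ hold for some polynomials $R_{n-1},R_n$. The plan is to compute
$$
\mathbf{D}_x^{n+1}{\bf u}^{[n+1]}=\mathbf{D}_x^n\bigl(\mathbf{D}_x{\bf u}^{[n+1]}\bigr)=-\alpha\,\mathbf{D}_x^n\bigl(\psi^{[n]}{\bf u}^{[n]}\bigr),
$$
using \eqref{lemmaA1}. Since $\psi^{[n]}\in\mathcal{P}_1$ and, by \eqref{psi-explicit}, $\psi^{[n]}(z)=d_{2n}(z-\mathfrak{c}_3)+e_n$, I would invoke \eqref{leib-for-degree-pi-1} to obtain
$$
\mathbf{D}_x^n\bigl(\psi^{[n]}{\bf u}^{[n]}\bigr)
=\Bigl(\tfrac{d_{2n}}{\alpha_n}(z-\mathfrak{c}_3)+e_n\Bigr)\mathbf{D}_x^n{\bf u}^{[n]}
+\tfrac{d_{2n}\gamma_n}{\alpha_n}\,\mathbf{D}_x^{n-1}\mathbf{S}_x{\bf u}^{[n]}.
$$
The first summand contributes $\bigl(\tfrac{d_{2n}}{\alpha_n}(z-\mathfrak{c}_3)+e_n\bigr)R_n{\bf u}$, which accounts for the $(a_nz-s_n)R_n$ part after the multiplication by $-\alpha$ and using $d_{2n}d_{2n-1}/d_{n-1}=\alpha_nd_{2n}/\ldots$ type identities derived from \eqref{1.2}--\eqref{form-5}.

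The crux is handling the second summand $\mathbf{D}_x^{n-1}\mathbf{S}_x{\bf u}^{[n]}$. Here I would apply \eqref{lemmaA2} to $k=n-1$, giving
$$
\mathbf{S}_x{\bf u}^{[n]}=-\alpha\bigl(\alpha\phi^{[n-1]}+\texttt{U}_1\psi^{[n-1]}\bigr){\bf u}^{[n-1]}.
$$
The polynomial in parentheses lies in $\mathcal{P}_2$, so I would apply the quadratic Leibniz formula \eqref{leibnizfor-degree-pi-2} with $n$ replaced by $n-1$. This produces three contributions involving $\mathbf{D}_x^{n-1}{\bf u}^{[n-1]}=R_{n-1}{\bf u}$, $\mathbf{D}_x^{n-2}\mathbf{S}_x{\bf u}^{[n-1]}$, and $\mathbf{D}_x^{n-3}\mathbf{S}_x^2{\bf u}^{[n-1]}$. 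To eliminate the two latter higher-order objects, I would apply Lemma \ref{FuncEq-uk} (which gives $\mathbf{D}_x(\phi^{[n-1]}{\bf u}^{[n-1]})=\mathbf{S}_x(\psi^{[n-1]}{\bf u}^{[n-1]})$), together with \eqref{lemmaA1}--\eqref{lemmaA2} at $k=n-1$, to collapse them back to multiples of ${\bf u}^{[n-1]}$, hence to $R_{n-1}{\bf u}$ by the inductive hypothesis. Alternatively, one iterates \eqref{33} and rewrites the $\mathbf{S}_x$-derivatives of ${\bf u}^{[n-1]}$ using \eqref{lemmaA2} directly.

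Assembling everything yields $\mathbf{D}_x^{n+1}{\bf u}^{[n+1]}=(a_nz-s_n)R_n{\bf u}-t_nR_{n-1}{\bf u}$, which defines $R_{n+1}$ as a polynomial of degree $n+1$ and proves \eqref{Rn-Prop1}. The coefficients $a_n$, $s_n$, $t_n$ then emerge from bookkeeping: $a_n$ comes from the leading $(z-\mathfrak{c}_3)$ term of $\psi^{[n]}$ after passing through the Leibniz rule; $s_n$ combines the constant part $e_n$ with the contribution of $\mathbf{S}_x{\bf u}^{[n]}$ at the leading order in $z$; and $t_n$ accumulates the lower-order residual, cleanly expressible through $\phi^{[n-1]}(\mathfrak{c}_3-e_{n-1}/d_{2n-2})$ after simplification via \eqref{psi-explicit}--\eqref{phi-explicit}. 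I expect this last identification of $t_n$ to be the main obstacle, since it requires a careful use of the identities \eqref{gamma-n-bis}--\eqref{form-5} together with the explicit expressions \eqref{solEqSy1}--\eqref{solEqSy3} to rewrite a bilinear combination of $a_{2n}$, $\gamma_{2n}$, $\alpha_n$, $\gamma_n$, and the coefficients of $\phi,\psi$ as a single evaluation of $\phi^{[n-1]}$ at the prescribed point. The $x$-admissibility $d_n\neq0$ is used throughout to guarantee that no denominator in \eqref{rn-Prop1}--\eqref{tn-Prop1} vanishes and that $\deg R_n=n$.
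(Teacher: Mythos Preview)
Your overall strategy---induction on $n$ with base cases $n=0,1$, then using \eqref{lemmaA1} to write $\mathbf{D}_x^{n+1}{\bf u}^{[n+1]}=-\alpha\mathbf{D}_x^n(\psi^{[n]}{\bf u}^{[n]})$, applying the Leibniz formulas \eqref{leib-for-degree-pi-1} and \eqref{leibnizfor-degree-pi-2}, and invoking \eqref{lemmaA2} to replace $\mathbf{S}_x{\bf u}^{[n]}$ by $\eta_2(\cdot;n){\bf u}^{[n-1]}$---is exactly the paper's approach. However, two points in your plan do not go through as stated.

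First, the elimination of the ``higher-order objects'' $\mathbf{D}_x^{n-2}\mathbf{S}_x{\bf u}^{[n-1]}$ and $\mathbf{D}_x^{n-3}\mathbf{S}_x^2{\bf u}^{[n-1]}$ cannot be done by applying \eqref{lemmaA1}--\eqref{lemmaA2} at $k=n-1$: those relations express $\mathbf{D}_x{\bf u}^{[n]}$ and $\mathbf{S}_x{\bf u}^{[n]}$ in terms of ${\bf u}^{[n-1]}$, not $\mathbf{S}_x{\bf u}^{[n-1]}$ or $\mathbf{S}_x^2{\bf u}^{[n-1]}$ in terms of ${\bf u}^{[n-1]}$. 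The quantity $\mathbf{D}_x^{n-2}\mathbf{S}_x{\bf u}^{[n-1]}$ is handled in the paper by shifting $n\mapsto n-1$ in your first Leibniz expansion and using the hypothesis $R_n{\bf u}=\mathbf{D}_x^n{\bf u}^{[n]}$; this gives it as a combination of $R_n{\bf u}$ and $R_{n-1}{\bf u}$, not of $R_{n-1}{\bf u}$ alone. For $\mathbf{D}_x^{n-3}\mathbf{S}_x^2{\bf u}^{[n-1]}$ the paper needs a genuinely new ingredient: it rewrites $\mathbf{D}_x^{n+1}{\bf u}^{[n+1]}$ a \emph{second} time, applying \eqref{a} and then \eqref{lemmaA1}--\eqref{lemmaA2} once more to obtain $\mathbf{D}_x^{n+1}{\bf u}^{[n+1]}=\mathbf{D}_x^{n-1}(\xi_2(\cdot;n){\bf u}^{[n-1]})$ with $\xi_2=\alpha^2(\psi^{[n-1]}\mathrm{S}_x\psi^{[n]}+\phi^{[n-1]}\mathrm{D}_x\psi^{[n]})\in\mathcal{P}_2$. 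A further Leibniz expansion of this second representation is what allows $\mathbf{D}_x^{n-3}\mathbf{S}_x^2{\bf u}^{[n-1]}$ to be solved for---and it reintroduces $\mathbf{D}_x^{n+1}{\bf u}^{[n+1]}$ on the right-hand side.

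Second, and relatedly, your claim that the first summand of the Leibniz expansion already accounts for $(a_nz-s_n)R_n$ is incorrect: note that $-\alpha d_{2n}/\alpha_n\neq a_n$. Because $\mathbf{D}_x^{n+1}{\bf u}^{[n+1]}$ appears on both sides after all substitutions, the final identity is of the form $\epsilon_n\cdot(\text{stuff})=\text{other stuff}$ with $\epsilon_n=-d_{n-1}/(\alpha\gamma_n d_{2n}d_{2n-1})$, and the factor $d_{2n-1}/d_{n-1}$ in $a_n$ emerges only after dividing by $\epsilon_n$. The coefficients $A(z;n)=a_nz-s_n$ and $B(z;n)=-t_n$ collect contributions from the $\psi^{[n]}$, $\eta_2$, and $\xi_2$ expansions together; the identification of $B(z;n)$ with $-t_n$ is indeed the longest computation, as you anticipated.
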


\begin{proof}
We apply mathematical induction on $n$.
If $n=0$, (\ref{roformula}) is trivial.
If $n=1$, (\ref{roformula}) follows from \eqref{lemmaA1}, since $R_1=-\alpha\psi$.
Assume now (induction hypothesis) that (\ref{roformula}) holds for two consecutive nonnegative integer numbers,
i.e., the relations
\begin{align}\label{InducHyp}
R_{n-1} {\bf u}=\mathbf{D}_x^{n-1} {\bf u}^{[n-1]} \;,\quad
R_n {\bf u}=\mathbf{D}_x^n {\bf u}^{[n]}
\end{align}
hold for some fixed $n\in\mathbb{N}$.
We need to prove that $R_{n+1}{\bf u}=\mathbf{D}_x^{n+1} {\bf u}^{[n+1]}$.
Notice first that, by \eqref{psi-explicit} and \eqref{dnen-Prop}, we have
\begin{equation}\label{psid2n}
\psi^{[k]}(z)=d_{2k}(z-\mathfrak{c}_3)+e_k\quad(k=0,1,\ldots)\,.
\end{equation}
By \eqref{lemmaA1} and the Leibniz formula in Proposition \ref{Leibniz-rule-NUL}, we may write
\begin{align*}
\mathbf{D}_x^{n+1}{\bf u}^{[n+1]}
&= \mathbf{D}_x^n \mathbf{D}_x {\bf u}^{[n+1]} = -\alpha \mathbf{D}_x^n (\psi^{[n]} {\bf u}^{[n]}) \\
&= -\alpha \mathrm{T}_{n,0} \psi ^{[n]}  \mathbf{D}_x^n {\bf u}^{[n]}
-\alpha \mathrm{T}_{n,1}\psi^{[n]}\mathbf{D}_x^{n-1} \mathbf{S}_x {\bf u}^{[n]}\,.
\end{align*}
From \eqref{leib-for-degree-pi-1} we have $\mathrm{T}_{n,1}\psi^{[n]}=d_{2n}\gamma_n /\alpha_n$,
and so, using also \eqref{InducHyp},
\begin{align}
\mathbf{D}_x^{n-1} \mathbf{S}_x {\bf u}^{[n]} =-\frac{\alpha_n}{\alpha d_{2n} \gamma_n}
\left(\mathbf{D}_x ^{n+1} {\bf u}^{[n+1]}
+ \alpha \big(\mathrm{T}_{n,0}\psi ^{[n]}\big) R_n {\bf u} \right)\,. \label{1a-lattice}
\end{align}
Shifting $n$ into $n-1$, and using again the induction hypothesis \eqref{InducHyp}, we obtain
\begin{align}
\mathbf{D}_x^{n-2} \mathbf{S}_x {\bf u}^{[n-1]} =-\frac{\alpha_{n-1}}{\alpha d_{2n-2}\gamma_{n-1}}
\left(R_n+\alpha \big(\mathrm{T}_{n-1,0}\psi^{[n-1]}\big)\,R_{n-1}\right){\bf u}\;. \label{1b-lattice}
\end{align}
Next, using \eqref{lemmaA1}, \eqref{a}, and \eqref{lemmaA2}, we deduce
\begin{align}
\mathbf{D}_x^{n+1}{\bf u}^{[n+1]} &= -\alpha \mathbf{D}_x^n \big(\psi^{[n]}{\bf u}^{[n]}\big)
=-\alpha \mathbf{D}_x^{n-1}\big( \mathbf{D}_x(\psi^{[n]} {\bf u}^{[n]})\big) \label{Eq-xi1}\\
&=-\mathbf{D}_x^{n-1} \Big(\big(\alpha \mathrm{S}_x \psi ^{[n]}-\texttt{U}_1\mathrm{D}_x\psi^{[n]}\big)
\mathbf{D}_x{\bf u}^{[n]} +\mathrm{D}_x\psi ^{[n]}\mathbf{S}_x {\bf u}^{[n]} \Big) \nonumber \\
&=\mathbf{D}_x^{n-1} \big( \xi_2(\cdot;n) {\bf u}^{[n-1]}  \big)\;, \nonumber
\end{align}
where $\xi_2(\cdot;n)$ is a polynomial of degree $2$, given by
\begin{align}\label{xi-definition}
\xi_2 (z;n)= \alpha^2 \big(\psi ^{[n-1]}\mathrm{S}_x \psi ^{[n]} +\phi ^{[n-1]}\mathrm{D}_x \psi ^{[n]}   \big)(z)\;.
\end{align}
The following identities may be proved by a straightforward computation:
\begin{align*}
d_{2n-1}-\alpha d_{2n-2}&=a^{[n-1]} \;,\\
d_{2n-2}\big( e_n -2\alpha \mathfrak{c}_3d_{2n} \big) +d_{2n}\big(b^{[n-1]}+\alpha e_{n-1}\big)&=2d_{2n-1}(\alpha e_n-\mathfrak{c}_3 d_{2n})
\end{align*}
for each $n=1,2,\ldots$. (The second one is achieved by using equation \eqref{EqSy4}.)
Using these relations, together with \eqref{phikpsik}, \eqref{psid2n}, \eqref{Dx-xn}, and \eqref{Sx-xn},
we deduce
\begin{align}\label{xi-2zn}
\xi_2 (z;n) &= \alpha ^2 d_{2n}d_{2n-1} z^2 +2\alpha ^2 d_{2n-1} (\alpha e_n-\mathfrak{c}_3 d_{2n})z \\
&\quad+\alpha^2 \big(d_{2n} c^{[n-1]} +(e_{n-1}-\mathfrak{c}_3d_{2n-2} )(e_n-\alpha\mathfrak{c}_3 d_{2n})\big)\,.\nonumber
\end{align}
Since $\deg\xi_2(\cdot;n)=2$, using again Proposition \ref{Leibniz-rule-NUL}, we may write
\begin{align}
\mathbf{D}_x^{n-1}\big(\xi_2(\cdot;n){\bf u}^{[n-1]}\big)
&= \mathrm{T}_{n-1,0}\xi_2(\cdot;n) \mathbf{D}_x^{n-1}{\bf u}^{[n-1]}
+\mathrm{T}_{n-1,1}\xi_2(\cdot;n)\mathbf{D}_x^{n-2}\mathrm{S}_x{\bf u}^{[n-1]} \label{Eqq-ss} \\
&\quad+\mathrm{T}_{n-1,2}\xi_2(\cdot;n) \mathbf{D}_x^{n-3}\mathbf{S}_x^2{\bf u}^{[n-1]}\;. \nonumber
\end{align}
Since, by \eqref{leib-for-degree-pi-1},
$\mathrm{T}_{n-1,2}\xi_2(\cdot;n)=\alpha^2 \gamma_{n-1} \gamma_{n-2}d_{2n} d_{2n-1}/\alpha_{n-2}^2$,
combining equations \eqref{Eqq-ss}, \eqref{Eq-xi1}, \eqref{1b-lattice}, and \eqref{InducHyp}, we obtain
\begin{align}
\mathbf{D}_x^{n-3} \mathbf{S}_x^2{\bf u}^{[n-1]}
&=\frac{\alpha_{n-2}^2}{\alpha^2\gamma_{n-1}\gamma_{n-2}d_{2n}d_{2n-1}}
\left( \mathbf{D}_x^{n+1}{\bf u}^{[n+1]}-\big(\mathrm{T}_{n-1,0}\xi_2(\cdot;n)\big)R_{n-1}{\bf u} \right.\label{1c-lattice} \\
&\quad + \left. \frac{\alpha_{n-1} \mathrm{T}_{n-1,1}\xi_2(\cdot;n)}{\alpha\gamma_{n-1} d_{2n-2}} \Big(R_n  + \alpha \big(\mathrm{T}_{n-1,0}\psi^{[n-1]}\big)R_{n-1} \Big){\bf u} \right)\;. \nonumber
\end{align}
On the other hand, by (\ref{lemmaA2}),
\begin{align}\label{eta-definition}
\mathbf{S}_x {\bf u}^{[n]}=\eta_2(\cdot;n){\bf u}^{[n-1]}\;,\quad
\eta_2(z;n):=-\alpha\big(\alpha\phi^{[n-1]}+\texttt{U}_1\psi^{[n-1]}\big)(z)\;.
\end{align}
Taking into account that $\eta_2(\cdot;n)$ is a polynomial of degree at most two, by the Leibniz formula and \eqref{InducHyp}, we may write
\begin{align}
\mathbf{D}_x^{n-1} \mathbf{S}_x {\bf u}^{[n]}
&=\mathbf{D}_x^{n-1} \big( \eta_2 (\cdot;n){\bf u}^{[n-1]} \big) \label{EqBoa}\\
&= \big(\mathrm{T}_{n-1,0}\eta_2(\cdot;n)\big)R_{n-1}{\bf u}
+\mathrm{T}_{n-1,1}\eta_2(\cdot;n)\mathbf{D}_x^{n-2} \mathbf{S}_x{\bf u}^{[n-1]} \nonumber \\
&\quad +\mathrm{T}_{n-1,2}\eta_2(\cdot;n)\mathbf{D}_x^{n-3} \mathbf{S}_x^2{\bf u}^{[n-1]}\;. \nonumber
\end{align}
Note that $\eta_2(\cdot;n)$ is given explicitly by
\begin{align}\label{eta-2zn}
\eta_2 (z;n)&= \alpha (\alpha d_{2n-1}-d_{2n})z^2 -\alpha\Big(\alpha b^{[n-1]} +(\alpha^2-1)(e_{n-1} -2\mathfrak{c}_3 d_{2n-2})\Big)z \\
&\quad -\alpha\big(\alpha c^{[n-1]}+\beta(\alpha +1)(e_{n-1}-\mathfrak{c}_3d_{2n-2})\big)\,.\nonumber
\end{align}
Hence, using \eqref{leib-for-degree-pi-1},
$\mathrm{T}_{n-1,2}\eta_2(\cdot;n)=\alpha \gamma_{n-1}\gamma_{n-2}(\alpha d_{2n-1}-d_{2n})/\alpha_{n-2}^2$.
Therefore, substituting \eqref{1a-lattice}, \eqref{1b-lattice}, and \eqref{1c-lattice} in \eqref{EqBoa}, we obtain
\begin{equation}\label{Dn1HypInd}
\mathbf{D}_x^{n+1} {\bf u}^{[n+1]}
=\big(A(\cdot;n) R_n+ B(\cdot;n)R_{n-1}\big){\bf u}\;,
\end{equation}
where $A(\cdot;n)$ and $B(\cdot;n)$ are polynomials depending on $n$, given by
\begin{align}\label{Azn-initial}
\epsilon_n A(z;n) &= \frac{\alpha_n \big(\mathrm{T}_{n,0}\psi ^{[n]}\big)(z)}{\gamma_n d_{2n}} -\frac{\alpha_{n-1} \big(\mathrm{T}_{n-1,1} \eta_2 \big)(z;n)}{\alpha \gamma_{n-1} d_{2n-2}} \\
&\quad+\frac{\alpha_{n-1}(\alpha d_{2n-1}-d_{2n}) \big(\mathrm{T}_{n-1,1}\xi_2\big)(z;n)}{\alpha ^2 \gamma_{n-1} d_{2n}d_{2n-1} d_{2n-2}} \nonumber
\end{align}
and
\begin{align}\label{Bzn-initial}
\epsilon_n B(z;n) &= \big(\mathrm{T}_{n-1,0}\eta_2\big)(z;n) -\frac{\alpha_{n-1} \big(\mathrm{T}_{n-1,0}\psi ^{[n-1]}\big)(z)\big(\mathrm{T}_{n-1,1}\eta_2\big)(z;n)}{ \gamma_{n-1} d_{2n-2}} \nonumber \\
&\quad + \frac{(d_{2n}-\alpha d_{2n-1}) \big(\mathrm{T}_{n-1,0}\xi_2\big)(z;n)}{\alpha d_{2n} d_{2n-1}} \\
&\quad + \frac{\alpha_{n-1} (\alpha d_{2n-1}-d_{2n}) \big(\mathrm{T}_{n-1,1}\xi_2\big)(z;n)\big(\mathrm{T}_{n-1,0}\psi ^{[n-1]}\big)(z)}{\alpha \gamma_{n-1} d_{2n} d_{2n-1} d_{2n-2}}\,,\nonumber
\end{align}
where
\begin{align*}
\epsilon_n :=\frac{d_{2n} -\alpha d_{2n-1}}{\alpha d_{2n}d_{2n-1}} -\frac{\alpha_n}{\alpha \gamma_n d_{2n}}=-\frac{d_{n-1}}{\alpha \gamma_n d_{2n} d_{2n-1}}\,.
\end{align*}
Note that $\gamma_nd_{2n} -(\alpha +\alpha_n)d_{2n-1}=-d_{n-1}$ ($n=0,1,\ldots$).
We claim that
\begin{align}
A(z;n)&=-\alpha \frac{d_{2n}d_{2n-1}}{d_{n-1}}(z-\mathfrak{c}_3) +\frac{\alpha \gamma_n d_{2n}d_{2n-1}e_{n-1}}{d_{2n-2}d_{n-1}} 
-\frac{\alpha \gamma_{n+1} d_{2n-1}e_{n}}{d_{n-1}}  \label{Azn-final} \\
&=a_n z-s_n\,,\nonumber \\
B(z;n)&= \alpha ^2 \frac{\gamma_n d_{2n}d_{2n-2}}{d_{n-1}} \phi ^{[n-1]} \left(\mathfrak{c}_3 -\frac{e_{n-1}}{d_{2n-2}} \right) =-t_n \label{Bzn-final}
\end{align}
($n=0,1,\ldots$), where $a_n$, $s_n$, and $t_n$ are given by \eqref{rn-Prop1}--\eqref{tn-Prop1}. 
Indeed, by \eqref{solEqSy1} and \eqref{solEqSy1a}, 
\begin{align}
&a^{[n-1]}= d_{2n-1}-\alpha d_{2n-2}, \label{an-app}\\
&b^{[n-1]}=e_n-\alpha e_{n-1}-2\mathfrak{c}_3a^{[n-1]} ,\label{bn-app}\\
&d_{2n}-2\alpha d_{2n-1}+d_{2n-2}=0\label{dn-app}
\end{align}
($n=1,2,\ldots$).
By \eqref{xi-2zn}, \eqref{eta-2zn}, and \eqref{psid2n}, and applying \eqref{leibnizfor-degree-pi-2}--\eqref{leib-for-degree-pi-1} together with \eqref{bn-app}, we obtain
\begin{align}
(\mathrm{T}_{n,0}\psi ^{[n]})(z) &=\frac{d_{2n}}{\alpha_n}(z-\mathfrak{c}_3) +e_n \label{Tn-psi-n,0}   ,\\
(\mathrm{T}_{n-1,1} \xi_2 )(z;n)&=\frac{\gamma_{n-1}}{\alpha_{n-1} }\Big( \frac{\alpha^2 d_{2n}d_{2n-1}(\alpha_{n-1}+\alpha\alpha_{n-2})}{\alpha_{n-2} ^2}(z-\mathfrak{c}_3) \label{Tn-xi-n-1,1} \\ &\quad+2\alpha^3 e_n d_{2n-1} \Big)  , \nonumber \\
(\mathrm{T}_{n-1,1}\eta_2 )(z;n)&=\frac{\gamma_{n-1}}{\alpha_{n-1} }\Big( \frac{\alpha (\alpha d_{2n-1}-d_{2n})(\alpha_{n-1}+\alpha\alpha_{n-2})}{\alpha_{n-2} ^2}(z-\mathfrak{c}_3)   \label{Tn-eta-n-1,1}  \\ &\quad +\alpha (e_{n-1}-\alpha e_n) \Big).  \nonumber
\end{align}
Similarly,
\begin{align}
(\mathrm{T}_{n-1,1}\eta_2)(z;n)=&\frac{\alpha^2(\alpha d_{2n-1}-d_{2n})}{\alpha_{n-1} \alpha_{n-2}}(z-\mathfrak{c}_3)^2 +\frac{\alpha(e_{n-1}-\alpha e_n)}{\alpha_{n-1}}(z-\mathfrak{c}_3) \label{Tn-eta-n-1,0} \\
& +\eta_2 (\mathfrak{c}_3;n)+\frac{4\alpha (1-\alpha^2)\gamma_{n-1}(\alpha d_{2n-1}-d_{2n}) }{\alpha_{n-2}}\mathfrak{c}_1\mathfrak{c}_2\,, \nonumber \\
(\mathrm{T}_{n-1,1}\xi_2)(z;n)=&\frac{\alpha^3 d_{2n-1}d_{2n}}{\alpha_{n-1}\alpha_{n-2}}(z-\mathfrak{c}_3)^2+\frac{2\alpha^3 e_nd_{2n-1}}{\alpha_{n-1}}(z-\mathfrak{c}_3)+\xi_2 (\mathfrak{c}_3;n) \label{Tn-xi-n-1,0} \\
& +\frac{4\alpha^2 (1-\alpha^2)\gamma_{n-1} d_{2n-1}d_{2n} }{\alpha_{n-2}}\mathfrak{c}_1\mathfrak{c}_2\,.  \nonumber     
\end{align}
Using \eqref{Tn-psi-n,0}, \eqref{Tn-xi-n-1,1}, and \eqref{Tn-eta-n-1,1} together with the identity $\alpha_n +\alpha\gamma_n=\gamma_{n+1}$ 
(this last one follows from \eqref{gamma-n-bis} and \eqref{gamma-n-bis1}), we deduce from \eqref{Azn-initial} that
\begin{align*}
\epsilon_n A(z;n)&=\frac{1}{\gamma_n}(z-\mathfrak{c}_3) -\frac{e_{n-1}}{d_{2n-2}}+ \frac{\alpha_n e_n}{\gamma_n d_{2n}} +\alpha e_n\frac{2\alpha d_{2n-1}-d_{2n} }{d_{2n}d_{2n-2}}\\
&=\frac{1}{\gamma_n}(z-\mathfrak{c}_3) -\frac{e_{n-1}}{d_{2n-2}}+ \frac{\gamma_{n+1} e_n}{\gamma_n d_{2n}}
\end{align*}
($n=0,1,\ldots$). This gives \eqref{Azn-final}.
From \eqref{Tn-psi-n,0}--\eqref{Tn-xi-n-1,0} it is straightforward to verify that \eqref{Bzn-initial} reduces to 
\begin{align*}
\epsilon_n B(z;n)&=\eta_2(\mathfrak{c}_3;n)+\frac{(d_{2n}-\alpha d_{2n-1})\xi_2(\mathfrak{c}_3;n)}{\alpha d_{2n}d_{2n-1}}+\frac{\alpha(\alpha e_n -e_{n-1})e_{n-1}}{d_{2n-2}}\\
&\quad +\frac{2\alpha^2 (\alpha d_{2n-1}-d_{2n})e_ne_{n-1}}{d_{2n}d_{2n-2}}.
\end{align*} 
Moreover, from the definitions of $\xi(.;n)$ and $\eta_2(.;n)$ given in \eqref{xi-definition} and \eqref{eta-definition}, we deduce
$$ \xi(\mathfrak{c}_3;n)=\alpha^2(e_ne_{n-1}+\phi^{[n-1]}(\mathfrak{c}_3)d_{2n}),\quad \eta_2(\mathfrak{c}_3;n)=-\alpha^2\phi^{[n-1]}(\mathfrak{c}_3)$$
($n=0,1,\ldots$). Consequently, by \eqref{an-app}, we show that
$$
\epsilon_nB(z;n)=-\alpha\frac{d_{2n-2}}{d_{2n-1}}\phi^{[n-1]}(\mathfrak{c}_3)+\alpha e_{n-1}\left(\frac{e_n}{d_{2n-1}}-\frac{e_{n-1}}{d_{2n-2}}  \right)\,.
$$
Therefore, using successively \eqref{an-app} and \eqref{bn-app}, we obtain
\begin{align*}
B(z;n)&= \frac{\alpha^2 \gamma_n d_{2n}d_{2n-2}}{d_{n-1}}\left( d_{2n-1}\frac{e_{n-1} ^2}{d_{2n-2} ^2} -e_n\frac{e_{n-1}}{d_{2n-2}} +\phi^{[n-1]}(\mathfrak{c}_3)  \right) \\
&= \frac{\alpha^2 \gamma_n d_{2n}d_{2n-2}}{d_{n-1}}\left((a^{[n-1]}+\alpha d_{2n-2})\frac{e_{n-1} ^2}{d_{2n-2} ^2} \right. \\
&\quad \left. -(b^{[n-1]}+\alpha e_{n-1}+2\mathfrak{c}_3 a^{[n-1]})\frac{e_{n-1}}{d_{2n-2}} +\phi^{[n-1]}(\mathfrak{c}_3)  \right) \\
&=\frac{\alpha^2 \gamma_n d_{2n}d_{2n-2}}{d_{n-1}}\phi^{[n-1]}\left(\mathfrak{c}_3-\frac{e_{n-1}}{d_{2n-2}} \right),
\end{align*}
and \eqref{Bzn-final} is proved.
It follows from \eqref{Azn-final} and \eqref{Bzn-final} that  \eqref{Dn1HypInd} reduces to $\mathbf{D}_x^{n+1} {\bf u}^{[n+1]}=R_{n+1}{\bf u}$,
which completes the proof.
\end{proof}

\subsection{Regularity of ${\bf u}^{[k]}$}

The next result is virtually proved in \cite[Theorem 2]{MP1994}.

\begin{lemma}\label{phi-psi-not-zero}
Let ${\bf u}\in\mathcal{P}^*$ be regular. Suppose that there is 
$(\phi,\psi)\in\mathcal{P}_2\times\mathcal{P}_1 \setminus \{(0,0)\}$ so that \eqref{NUL-Pearson} holds. 
Then neither $\phi$ nor $\psi$ is the zero polynomial, and $\deg\psi=1$.
\end{lemma}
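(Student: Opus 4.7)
The plan is to rule out three configurations for $(\phi,\psi)$ that would violate the conclusion, each yielding a contradiction with the regularity of ${\bf u}$: (i) $\phi=0$; (ii) $\psi=0$; (iii) $\phi,\psi$ both nonzero with $\deg\psi=0$.

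For (i), if $\phi=0$ then \eqref{NUL-Pearson} reduces to $\mathbf{S}_x(\psi{\bf u})={\bf 0}$, so $\langle{\bf u},\psi\,\mathrm{S}_x f\rangle=0$ for every $f\in\mathcal{P}$. Since $q>0$ gives $\alpha_n>0$ by \eqref{alpha-n}, the expansion \eqref{Sx-xn} shows that $\mathrm{S}_x$ restricts to a triangular isomorphism of each $\mathcal{P}_n$ onto itself; hence $\mathrm{S}_x\mathcal{P}=\mathcal{P}$ and therefore $\psi{\bf u}={\bf 0}$. A regular functional admits no nonzero polynomial annihilator: letting $(P_n)_{n\geq 0}$ be the monic OPS for ${\bf u}$ with $h_n:=\langle{\bf u},P_n^2\rangle\neq 0$, testing $\psi{\bf u}={\bf 0}$ against $\psi P_m$ yields $\langle{\bf u},\psi^2P_m\rangle=0$, and expanding $\psi^2=\sum_j c_jP_j$ in the OPS basis gives $c_m h_m=0$ for every $m$; so $\psi^2=0$ and hence $\psi=0$, contradicting $(\phi,\psi)\neq(0,0)$. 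Case (ii) is strictly analogous, with $\mathrm{D}_x$ in place of $\mathrm{S}_x$: by \eqref{gamma-n} we have $\gamma_n\neq 0$ for $n\geq 1$, so \eqref{Dx-xn} yields $\mathrm{D}_x\mathcal{P}=\mathcal{P}$, forcing $\phi{\bf u}={\bf 0}$ and hence $\phi=0$.

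Case (iii) is the shortest. Suppose $\phi,\psi\neq 0$ but $\psi\equiv e$ is a nonzero constant. Evaluating \eqref{NUL-Pearson} at the constant polynomial $1$, and using $\mathrm{D}_x 1=0$ and $\mathrm{S}_x 1=1$, yields
$$
0=-\langle{\bf u},\phi\,\mathrm{D}_x 1\rangle=\langle\mathbf{D}_x(\phi{\bf u}),1\rangle=\langle\mathbf{S}_x(\psi{\bf u}),1\rangle=e\,u_0,
$$
contradicting the regularity condition $u_0=H_0\neq 0$. The only conceptual point worth flagging in the whole argument is the step "$\langle{\bf u},\psi\,\mathrm{S}_xf\rangle=0$ for all $f$ $\Longrightarrow$ $\psi{\bf u}={\bf 0}$" (and its $\mathrm{D}_x$ analogue), which rests on the surjectivity of $\mathrm{S}_x$ and $\mathrm{D}_x$; this is automatic for $q>0$, so I do not foresee any real obstacle beyond this observation, and the whole proof should be a short adaptation of the $\mathrm{D}$-classical argument from \cite{MP1994}.
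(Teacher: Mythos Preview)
Your argument is correct. The paper does not actually supply a proof of this lemma---it merely states that the result is ``virtually proved in \cite[Theorem 2]{MP1994}''---so there is nothing to compare against directly; your three-case analysis (using the surjectivity of $\mathrm{S}_x$ and $\mathrm{D}_x$ on $\mathcal{P}$, the fact that a regular functional has no nonzero polynomial annihilator, and the evaluation at $1$ to rule out $\deg\psi=0$) is precisely the expected adaptation of the classical argument in \cite{MP1994} to the lattice setting. One small remark: you invoke the explicit expansions \eqref{Dx-xn}--\eqref{Sx-xn}, which in the paper are only stated for the lattice $x(s)=\mathfrak{c}_1q^{-s}+\mathfrak{c}_2q^s+\mathfrak{c}_3$, but the surjectivity you need follows for \emph{any} lattice directly from the degree conditions $\deg(\mathrm{D}_xf)=\deg f-1$ and $\deg(\mathrm{S}_xf)=\deg f$ built into Definition~\ref{def-DxSxNUL}, so the argument covers the $q=1$ case as well without further comment.
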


Lemma \ref{x-admissible} in bellow gives the regularity of ${\bf u}^{[k]}$. The result appears in \cite[Proposition 4]{FK-NM2011}. 
However the proof of the $x-$admissibility condition given therein is incorrect. 
We present a proof following ideas presented in \cite{MP1994,RKDP2020}.

\begin{lemma}\label{x-admissible}
Let ${\bf u}\in\mathcal{P}^*$. Suppose that ${\bf u}$ is regular and satisfies $(\ref{NUL-Pearson})$, 
where $(\phi,\psi)\in\mathcal{P}_2\times\mathcal{P}_1 \setminus \{(0,0)\}$.
Then $(\phi,\psi)$ is a $x-$admissible pair and ${\bf u}^{[k]}$ is regular for each $k=1,2,\ldots$.
Moreover, if $(P_n)_{n\geq0}$ is the monic OPS with respect to ${\bf u}$, then
$\big(P_n^{[k]}\big)_{n\geq0}$ is the monic OPS with respect to ${\bf u}^{[k]}$.
\end{lemma}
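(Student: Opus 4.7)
I would prove, by induction on $k\ge0$, the simultaneous statements that ${\bf u}^{[k]}$ is regular and that $(P_n^{[k]})_{n\ge0}$ is its monic OPS; the $x$-admissibility of $(\phi,\psi)$ then drops out as a necessary side condition. The base case $k=0$ is the hypothesis, and by Lemma~\ref{phi-psi-not-zero} we already have $\deg\psi=1$, so $d_0=\psi'\ne0$.

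The engine of the proof is the identity, valid for every $f\in\mathcal{P}$ and every $k,m\ge0$,
$$\gamma_{m+1}\langle{\bf u}^{[k+1]},P_m^{[k+1]}f\rangle=\alpha\langle{\bf u}^{[k]},(\psi^{[k]}\mathrm{S}_xf+\phi^{[k]}\mathrm{D}_xf)P_{m+1}^{[k]}\rangle,$$
obtained by rewriting $f\,\mathrm{D}_xP_{m+1}^{[k]}=\gamma_{m+1}fP_m^{[k+1]}$ through \eqref{PropNUL1} and then invoking the two identities $\mathbf{D}_x{\bf u}^{[k+1]}=-\alpha\psi^{[k]}{\bf u}^{[k]}$ and $\mathbf{S}_x{\bf u}^{[k+1]}=-\alpha(\alpha\phi^{[k]}+\texttt{U}_1\psi^{[k]}){\bf u}^{[k]}$ from Lemma~\ref{lemmaA}; two $\texttt{U}_1$-contributions cancel against each other, leaving the clean right-hand side. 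Combining \eqref{Dx-xn}--\eqref{Sx-xn} with the explicit expressions \eqref{psi-explicit}--\eqref{phi-explicit}, a direct computation using the addition formulas $\alpha_m\gamma_n+\alpha_n\gamma_m=\gamma_{m+n}$ and $\alpha_m\alpha_n+(\alpha^2-1)\gamma_m\gamma_n=\alpha_{m+n}$ shows that $\psi^{[k]}\mathrm{S}_xz^j+\phi^{[k]}\mathrm{D}_xz^j$ has degree $j+1$ with leading coefficient $d_{2k+j}$. Under the inductive hypothesis that $(P_n^{[k]})$ is the OPS for ${\bf u}^{[k]}$, choosing $f=z^j$ with $j<m$ makes the right-hand side vanish (by orthogonality of $P_{m+1}^{[k]}$ against polynomials of degree $\le m$), and $f=z^m$ yields the norm recursion
$$h_m^{[k+1]}=\frac{\alpha\,d_{2k+m}}{\gamma_{m+1}}\,h_{m+1}^{[k]},\qquad h_n^{[k]}:=\langle{\bf u}^{[k]},(P_n^{[k]})^2\rangle.$$
The inductive step therefore reduces to the single numerical assertion $d_{2k+m}\ne0$ for every $m\ge0$.

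The main obstacle is thus the admissibility statement $d_n\ne0$ for every $n\ge0$, which at $k=0$ is not amenable to any obvious sign or positivity argument since $\phi,\psi$ have complex coefficients. Following the strategy of \cite{MP1994,RKDP2020}, I would argue by contradiction: let $N\ge1$ be minimal with $d_N=0$. Then the recursion gives $h_m^{[1]}\ne0$ for $0\le m<N$ but $h_N^{[1]}=0$, so $P_N^{[1]}$ is a monic polynomial of degree $N$ with $\langle{\bf u}^{[1]},P_N^{[1]}z^j\rangle=0$ for all $0\le j\le N$. Transferring this vanishing back to ${\bf u}$ via the definition ${\bf u}^{[1]}=\mathbf{D}_x(\texttt{U}_2\psi{\bf u})-\mathbf{S}_x(\phi{\bf u})$ and the three-term recurrence for $(P_n)$ produces a non-trivial linear relation among the first $N+1$ moments of ${\bf u}$; combined with the fact that the functional equation \eqref{NUL-Pearson}, together with $d_n\ne0$ for $n<N$, already determines $u_1,\ldots,u_N$ as explicit multiples of $u_0$, this relation collapses to $c\,u_0=0$ with $c\ne0$, contradicting $u_0=H_0({\bf u})\ne0$. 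Once admissibility is secured, the norm recursion propagates $h_m^{[k]}\ne0$ to every $k,m\ge0$, and the orthogonality relations established above identify $(P_n^{[k]})$ as the unique monic OPS for ${\bf u}^{[k]}$, completing the induction.
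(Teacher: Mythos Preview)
Your engine identity
\[
\gamma_{m+1}\langle{\bf u}^{[k+1]},P_m^{[k+1]}f\rangle=\alpha\langle{\bf u}^{[k]},(\psi^{[k]}\mathrm{S}_xf+\phi^{[k]}\mathrm{D}_xf)P_{m+1}^{[k]}\rangle
\]
is correct (the cancellation of the two $\texttt{U}_1$-terms works exactly as you say), and the resulting norm recursion $h_m^{[k+1]}=\alpha d_{2k+m}\gamma_{m+1}^{-1}h_{m+1}^{[k]}$ coincides with the paper's relation~\eqref{uk-is-regular}. So for the regularity of ${\bf u}^{[k]}$ and the identification of $(P_n^{[k]})_{n\ge0}$ as its monic OPS, you and the paper are doing the same thing.

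The divergence is in the proof that $d_n\ne0$, and here your proposal has a real gap. You locate a minimal $N$ with $d_N=0$, deduce that $P_N^{[1]}$ is orthogonal (for ${\bf u}^{[1]}$) to everything of degree $\le N$, and then assert that transferring back to ${\bf u}$ via ${\bf u}^{[1]}=\mathbf{D}_x(\texttt{U}_2\psi{\bf u})-\mathbf{S}_x(\phi{\bf u})$ produces a relation $c\,u_0=0$ with $c\ne0$. But nothing you have written explains why $c\ne0$. Concretely, the $N+1$ relations you obtain are $\langle{\bf u},\texttt{U}_2\psi\mathrm{D}_x(P_N^{[1]}z^j)+\phi\mathrm{S}_x(P_N^{[1]}z^j)\rangle=0$ for $0\le j\le N$; these are linear relations among $u_0,\ldots,u_{2N+2}$, and reducing them to a single relation in $u_0$ alone via the moment recursion \eqref{momentseq} requires controlling all the lower-order coefficients, which you have not done. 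A single nonzero linear relation among the moments is of course compatible with regularity of ${\bf u}$, so no contradiction is yet visible.

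The paper avoids this difficulty by deriving a \emph{second} identity, \eqref{Equa1}, independent of the norm recursion: pairing the polynomial $\texttt{U}_2\psi\mathrm{D}_xP_n^{[1]}+\phi\mathrm{S}_xP_n^{[1]}$ (whose leading coefficient is $\alpha d_n-d_{n-1}$) against $P_{n+2}$ via ${\bf u}$, and rewriting the result via ${\bf u}^{[1]}$, gives
\[
\alpha\Big(1+\frac{c_{n,n}}{\gamma_{n+1}C_{n+2}}\Big)d_n=d_{n-1}\qquad(n\ge1),
\]
from which $d_{n-1}\ne0\Rightarrow d_n\ne0$ follows immediately, and induction on $n$ (starting from $d_0=d\ne0$) finishes. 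The point is that this identity has $d_{n-1}$ isolated on the right with a coefficient that is automatically nonzero (it equals $\langle{\bf u},P_{n+2}^2\rangle/\langle{\bf u},P_{n+1}^2\rangle$ up to known nonzero factors), so no delicate nonvanishing of an unnamed constant $c$ is needed. Your contradiction scheme, as it stands, would essentially have to reproduce this computation to justify $c\ne0$.
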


\begin{proof}
Set $\phi(z)=az^2+bz+c$ and $\psi(z)=dz+e$. 
By Lemma \ref{phi-psi-not-zero}, both $\phi$ and $\psi$ are nonzero polynomials, and $d\neq0$.
If $\deg\phi\in\{0,1\}$ then $d_n=d\alpha_n\neq0$ for each $n=0,1,\ldots$, and so the pair $(\phi,\psi)$ is $x-$admissible. 
Assume now that $\deg\phi=2$. Then $d_n=a\gamma_n+d\alpha_n$, with $a\neq0$. To prove that $d_n\neq0$, we start by showing that
\begin{equation}\label{Equa1}
\big\langle{\bf u},\big(\texttt{U}_2\psi\mathrm{D}_xP_n^{[1]}+\phi\mathrm{S}_xP_n^{[1]}\big)P_{n+2}\big\rangle
=-\big\langle{\bf u}^{[1]},\big(\mathrm{S}_xP_{n+2}+\alpha^{-1}\texttt{U}_1\mathrm{D}_xP_{n+2}\big)P_n^{[1]}\big\rangle
\end{equation}
for each $n=0,1,\ldots$. 
Indeed, we have
\begin{align*}
&\big\langle{\bf u},\big(\texttt{U}_2\psi\mathrm{D}_xP_n^{[1]}+\phi\mathrm{S}_xP_n^{[1]}\big)P_{n+2}\big\rangle \\
&\qquad=\big\langle \texttt{U}_2\psi{\bf u},P_{n+2}\mathrm{D}_xP_n^{[1]}\big\rangle
+\big\langle \phi{\bf u},P_{n+2}\mathrm{S}_xP_n^{[1]}\big\rangle \\
&\qquad=\big\langle \texttt{U}_2\psi{\bf u},\mathrm{D}_x\big((\mathrm{S}_xP_{n+2} -\alpha^{-1}\texttt{U}_1\mathrm{D}_xP_{n+2})P_n^{[1]}\big)
-\alpha^{-1}\mathrm{S}_x\big(P_n^{[1]}\mathrm{D}_xP_{n+2}\big)\big\rangle \\
&\qquad\quad+\big\langle \phi{\bf u},\mathrm{S}_x\big((\mathrm{S}_xP_{n+2} -\alpha^{-1}\texttt{U}_1\mathrm{D}_xP_{n+2})P_n^{[1]}\big) -\alpha^{-1}\texttt{U}_2\mathrm{D}_x\big(P_n^{[1]}\mathrm{D}_xP_{n+2}\big)\big\rangle \\
&\qquad=-\big\langle {\bf u}^{[1]},\big(\mathrm{S}_xP_{n+2}-\alpha^{-1}\texttt{U}_1\mathrm{D}_xP_{n+2}\big)P_n^{[1]}\big\rangle \\
&\qquad\quad-\alpha^{-1}\big\langle \mathbf{S}_x(\texttt{U}_2\psi{\bf u})-\mathbf{D}_x(\texttt{U}_2\phi{\bf u}), P_n^{[1]}\mathrm{D}_xP_{n+2}\big\rangle\;,
\end{align*}
where the second equality holds by (\ref{PropNUL1}) and (\ref{PropNUL2}).
Therefore, using \eqref{lemmaA3} for $n=0$, we obtain (\ref{Equa1}).
Now, on the one hand, $\texttt{U}_2\psi\mathrm{D}_xP_n^{[1]}+\phi\mathrm{S}_xP_n^{[1]}$
is a polynomial of degree at most $n+2$,
being the coefficient of $z^{n+2}$ equal to $(\alpha^2-1)d\gamma_n+a\alpha_n$.
Hence, since the relations
$$(\alpha^2-1)d\gamma_n+a\alpha_n=d_{n+1}-\alpha d_n=\alpha d_n-d_{n-1}\quad(n=1,2,\ldots)$$
hold, we get
$$
\texttt{U}_2\psi\mathrm{D}_xP_n^{[1]}+\phi\mathrm{S}_xP_n^{[1]}=\big(\alpha d_n-d_{n-1}\big)z^{n+2}+(\mbox{\rm lower degree terms})
$$
for each $n=1,2,\ldots$. Consequently,
\begin{equation}\label{Equa2}
\big\langle{\bf u},\big(\texttt{U}_2\psi\mathrm{D}_xP_n^{[1]}+\phi\mathrm{S}_xP_n^{[1]}\big)P_{n+2}\big\rangle
=(\alpha d_n-d_{n-1})\langle{\bf u},P_{n+2}^2\rangle\quad(n=1,2,\ldots)\,.
\end{equation}
On the other hand, since
$\mathrm{S}_xP_{n+2}+\alpha^{-1}\texttt{U}_1\mathrm{D}_xP_{n+2}=\sum_{j=0}^{n+2}c_{n,j}P_j^{[1]}$
for some coefficients $c_{n,0},\ldots,c_{n,n+2}\in\mathbb{C}$, and using the next equation for $k=1$ 
\begin{align}\label{uk-is-regular}
\big\langle{\bf u}^{[k]},P_n^{[k]}P_m^{[k]}\big\rangle
=\alpha \frac{ d_n ^{[k-1]}}{\gamma_{n+1}}\langle{\bf u},(P_{n+1} ^{[k-1]})^2\rangle\delta_{n,m}\quad(0\leq m\leq n\,;\;n=0,1,\ldots)
\end{align}
(see \cite[Proof of Theorem 5 -- step 1.1]{FK-NM2011}), we obtain
\begin{equation}\label{Equa3}
\big\langle{\bf u}^{[1]},\big(\mathrm{S}_xP_{n+2}+\alpha^{-1}\texttt{U}_1\mathrm{D}_xP_{n+2}\big)P_n^{[1]}\big\rangle
=\frac{\alpha c_{n,n} d_n}{\gamma_{n+1}}\langle{\bf u},P_{n+1}^2\rangle\quad(n=1,2,\ldots)\,.
\end{equation}
Substituting (\ref{Equa2}) and (\ref{Equa3}) into (\ref{Equa1}), and since
$C_{n+2}=\langle{\bf u},P_{n+2}^2\rangle/\langle{\bf u},P_{n+1}^2\rangle$, we deduce
$$
\alpha\left(1+\frac{c_{n,n}}{\gamma_{n+1}C_{n+2}} \right) d_n=d_{n-1}\quad(n=1,2,\ldots)\;.
$$
Therefore, since $d_0 =d \neq 0$, we conclude that  $d_n\neq0$ for each $n=0,1,\ldots$ and so 
$(\phi,\psi)$ is a $x-$admissible pair.
Consequently, by (\ref{uk-is-regular}) for $k=1$, $(P_n ^{[1]})_{n\geq 0}$ is the monic OPS with respect to ${\bf u}^{[1]}$. 
This proves the last statement in the lemma for $k=1$.
Since $d_n ^{[k]}=a^{[k]}\gamma_n +\alpha_n d^{[k]}$ ($n,k=0,1,\ldots$), it is easy to see,  using (\ref{solEqSy1})--(\ref{solEqSy2}), 
that $d_n ^{[k]} =d_{n+2k} $ for all $n,k=0,1,\ldots$. Thus the last sentence in the lemma follows from (\ref{uk-is-regular}).
\end{proof}

\section{Main results: regularity conditions}\label{S-main}

In this section we state our main results, giving the analogues of Theorems B and C for OPS on lattices. 
Thus, once a lattice as in \eqref{xs-def} is fixed, we state necessary and sufficient conditions so that 
a functional ${\bf u} \in \mathcal{P}^*$ satisfying \eqref{NUL-Pearson} is regular. 
Furthermore, the monic OPS with respect to ${\bf u}$ is described. 

\subsection{The lattice $x(s)=\mathfrak{c}_1 q^{-s} +\mathfrak{c}_2 q^s +\mathfrak{c}_3$} 
We start  by considering a lattice  $x(s)$ with $q\neq1$, so that $x(s)$ is a $q-$quadratic or a $q-$linear lattice.

\begin{theorem}\label{main-Thm1}
Consider the lattice
$$
x(s)=\mathfrak{c}_1 q^{-s} +\mathfrak{c}_2 q^s +\mathfrak{c}_3\,. 
$$
Let ${\bf u}\in\mathcal{P}^*\setminus\{{\bf 0}\}$ and suppose that there exist $(\phi,\psi)\in\mathcal{P}_2\times\mathcal{P}_1\setminus\{(0,0)\}$ 
such that 
\begin{equation}\label{NUL-PearsonMainThm1}
\mathbf{D}_{x}(\phi{\bf u})=\mathbf{S}_{x}(\psi{\bf u})\;.
\end{equation}
Set $\phi(z)=az^2+bz+c$ and $\psi(z)=dz+e$ ($a,b,c,d,e \in \mathbb{C}$),
and let $d_n$ and $e_n$ be defined by \eqref{dnen-Prop}, and $\phi^{[n]}$ and $\psi^{[n]}$ be given by \eqref{psi-explicit}--\eqref{phi-explicit}. 
Then,  ${\bf u}$ is regular if and only if $(\phi,\psi)$ is a $x-$admissible pair and $\psi^{[n]}\nmid\phi^{[n]}$ for each $n=0,1,\ldots$; 
that is to say, ${\bf u}$ is regular  if and only if the following conditions hold:
\begin{equation}\label{le1a}
d_n\neq0\;,\quad \phi^{[n]}\left(\mathfrak{c}_3 -\frac{e_n}{d_{2n}}\right)\neq0\quad(n=0,1,\ldots)\,.
\end{equation}
Under such conditions, the monic OPS $(P_n)_{n\geq 0}$ with respect to ${\bf u}$ satisfies 
\begin{equation}\label{ttrr-Dx}
P_{n+1}(z)=(z-B_n)P_n(z)-C_nP_{n-1}(z) \quad(n=0,1,\ldots),
\end{equation}
with $P_{-1}(z)=0$, where the recurrence coefficients are given by
\begin{align}
B_n & =\mathfrak{c}_3+ \frac{\gamma_n e_{n-1}}{d_{2n-2}}
-\frac{\gamma_{n+1}e_n}{d_{2n}}\, ,\label{Bn-Dx} \\
C_{n+1} & =-\frac{\gamma_{n+1}d_{n-1}}{d_{2n-1}d_{2n+1}}\phi^{[n]}\left(\mathfrak{c}_3 -\frac{e_{n}}{d_{2n}}\right)\label{Cn-Dx}
\end{align}
$(n=0,1,\ldots)$.
Moreover, the following functional Rodrigues formula holds:
\begin{align}\label{RodThemMain}
P_n {\bf u} = k_n\mathbf{D}_x^n {\bf u}^{[n]}\;, \quad
k_n := (-\alpha)^{-n} \prod_{j=1} ^n d_{n+j-2} ^{-1} 
\quad(n=0,1,\ldots)\,.
\end{align}
\end{theorem}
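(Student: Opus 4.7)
The plan is to reduce the theorem to the Rodrigues statement in Theorem \ref{uk-is-classical} combined with Lemma \ref{x-admissible}, by normalizing the simple set $(R_n)$ produced there to a monic sequence and then verifying orthogonality directly. The only nontrivial tasks will be (i) matching the normalized TTRR coefficients to \eqref{Bn-Dx}--\eqref{Cn-Dx} and (ii) ruling out $u_0=0$.

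For the sufficiency direction, assume \eqref{le1a}. Theorem \ref{uk-is-classical} applies and yields the simple set $(R_n)$ together with $R_n{\bf u}=\mathbf{D}_x^n{\bf u}^{[n]}$ and the TTRR \eqref{Rn-Prop1}. Writing $R_n=\ell_n z^n+\cdots$, the leading-coefficient recursion $\ell_{n+1}=a_n\ell_n$ combined with \eqref{rn-Prop1} gives, after telescoping,
$$
\ell_n=(-\alpha)^n\prod_{j=1}^{n}d_{n+j-2}\neq0\quad(n=0,1,\ldots).
$$
Setting $P_n:=\ell_n^{-1}R_n$, the Rodrigues formula \eqref{RodThemMain} follows with $k_n=\ell_n^{-1}$, and the monic TTRR \eqref{ttrr-Dx} is obtained from \eqref{Rn-Prop1} by taking $B_n=s_n/a_n$ and $C_n=t_n/(a_na_{n-1})$. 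Substituting \eqref{rn-Prop1}--\eqref{tn-Prop1} and simplifying with the basic identities for $(d_n)$ produces precisely \eqref{Bn-Dx}--\eqref{Cn-Dx}.

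To complete sufficiency I would verify that $(P_n)_{n\geq0}$ is the monic OPS with respect to ${\bf u}$. For any $f\in\mathcal{P}_{n-1}$, $\mathrm{D}_x^n f=0$, so
$$
\langle{\bf u},fP_n\rangle=k_n\langle\mathbf{D}_x^n{\bf u}^{[n]},f\rangle=(-1)^nk_n\langle{\bf u}^{[n]},\mathrm{D}_x^n f\rangle=0.
$$
Testing the TTRR against $P_{n-1}$ and using the expansion $zP_{n-1}=P_n+(\text{lower})$ yields the standard telescoping $\langle{\bf u},P_n^2\rangle=u_0\prod_{k=1}^nC_k$, so orthogonality will be established once $u_0\neq0$. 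For the latter, applying \eqref{NUL-PearsonMainThm1} to $z^n$ and using \eqref{Dx-xn}--\eqref{Sx-xn} to identify the top-degree coefficient produces a moment recursion of the form $d_nu_{n+1}=\sum_{k=0}^n\kappa_{n,k}u_k$; since every $d_n\neq0$ by hypothesis, the entire moment sequence is determined by $u_0$, so $u_0=0$ would force ${\bf u}={\bf 0}$, contradicting the assumption.

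For necessity, assume ${\bf u}$ is regular. Lemma \ref{x-admissible} immediately gives $d_n\neq0$, so $(\phi,\psi)$ is $x$-admissible and the derivation of \eqref{Cn-Dx} above remains valid for the monic OPS $(P_n)$ associated with ${\bf u}$. The OPS axiom $C_{n+1}\neq0$ then forces $\phi^{[n]}\big(\mathfrak{c}_3-e_n/d_{2n}\big)\neq0$, yielding \eqref{le1a}. The main technical obstacle throughout will be the bookkeeping that reduces $s_n/a_n$ and $t_n/(a_na_{n-1})$ to the clean expressions \eqref{Bn-Dx}--\eqref{Cn-Dx}, and confirming the telescoping for $\ell_n$; both are elementary but rely on careful manipulation of $(\alpha_n)$, $(\gamma_n)$, $(d_n)$ and identities \eqref{gamma-n-bis}--\eqref{form-5}.
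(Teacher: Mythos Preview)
Your argument is correct and, for sufficiency, coincides with the paper's: both apply Theorem~\ref{uk-is-classical}, normalize $(R_n)$ to a monic sequence, prove orthogonality via $\mathrm{D}_x^nf=0$ for $f\in\mathcal{P}_{n-1}$, and rule out $u_0=0$ through the moment recursion induced by \eqref{NUL-PearsonMainThm1}. For necessity there is a small but genuine difference. The paper uses the full content of Lemma~\ref{x-admissible} (namely that each ${\bf u}^{[n]}$ is regular), computes $C_1^{[n]}$ directly from the first two moments of ${\bf u}^{[n]}$ via \eqref{uk-funcEq}, and reads off $\phi^{[n]}(\mathfrak{c}_3-e_n/d_{2n})\neq0$ from $C_1^{[n]}\neq0$. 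You instead take only $x$-admissibility from Lemma~\ref{x-admissible}, reapply Theorem~\ref{uk-is-classical} to recognize $\ell_n^{-1}R_n$ as the unique monic OPS of the regular ${\bf u}$, and extract the condition from $C_{n+1}\neq0$ in the original TTRR. Your route is somewhat more economical, since it reuses the sufficiency machinery wholesale and avoids the separate first-moment computation at each level; the paper's route has the advantage of isolating the regularity condition transparently as the nonvanishing of $C_1^{[n]}$ for the derived functionals, which connects it cleanly with \eqref{g1n}.
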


\begin{proof}
Suppose that ${\bf u}$ is regular. 
By Lemma \ref{x-admissible}, $(\phi,\psi)$ is a $x-$admissible pair, meaning that the first condition in \eqref{le1a} holds, and
$\big(P_j^{[n]}\big)_{j\geq0}$ is the monic OPS with respect to ${\bf u}^{[n]}$, for each fixed $n$.
Write the TTRR for $\big(P_j^{[n]}\big)_{j\geq0}$ as
\begin{equation}\label{Dx-ttrrC1}
P_{j+1}^{[n]}(z)=(z-B_j^{[n]})P_j^{[n]}(z)-C_j^{[n]}P_{j-1}^{[n]}(z)\quad  (j=0,1,\ldots)
\end{equation}
($P_{-1}^{[n]}(z):=0$), being $B_{j}^{[n]}\in\mathbb{C}$
and $C_{j+1}^{[n]}\in\mathbb{C}\setminus\{0\}$ for each $j=0,1,\ldots$.
Let us compute $C_{1}^{[n]}$. 
We first show that (for $n=0$) the coefficient $C_1\equiv C_{1}^{[0]}$,
appearing in the TTRR for $(P_j)_{j\geq0}$,
is given by
\begin{equation}\label{Dx-g1}
C_1=-\frac{1}{d\alpha +a}\,\phi\left(-\frac{e}{d}\right) =-\frac{1}{d_1}\,\phi\left(\mathfrak{c}_3 -\frac{e_0}{d_0}\right)\; .
\end{equation}
Indeed, taking $k=0$ and $k=1$ in the relation
$\langle{\bf D}_{x}(\phi{\bf u}),z^k\rangle=\langle {\bf S}_x(\psi{\bf u}),z^k\rangle$,
we obtain $0=du_1+eu_0$ and
$au_2+bu_1+cu_0=-d\alpha u_2-(e\alpha +d\beta)u_1 -e\beta u_0$, where
$u_k:=\langle{\bf u},z^k\rangle$ ($k=0,1,\ldots$). Therefore,
\begin{equation}
u_1=-\frac{e}{d}u_0 \; , \quad u_2=-\frac{1}{d\alpha+a}\left[-(b+e\alpha)\frac{e}{d}+c \right] u_0 \; .\label{Dx-u1}
\end{equation}
On the other hand, since $P_1(z)=z-B_0^{[0]}=z-u_1/u_0$, we also have
\begin{equation}
C_1=\frac{\langle{\bf u},P_1^2\rangle}{u_0}=\frac{u_2u_0-u_1^2}{u_0^2}
=\frac{u_2}{u_0}-\left(\frac{u_1}{u_0}\right)^2 \; .
\label{Dx-u2}
\end{equation}
Substituting $u_1$ and $u_2$ given by (\ref{Dx-u1}) into (\ref{Dx-u2}) yields (\ref{Dx-g1}).
Since equation (\ref{uk-funcEq}) fulfilled by ${\bf u}^{[n]}$ is of the same type as (\ref{NUL-PearsonMainThm1}) fulfilled by ${\bf u}$, 
with the polynomials $\phi^{[n]}$ and $\psi^{[n]}$ in (\ref{uk-funcEq}) playing the roles of $\phi$ and $\psi$ in (\ref{NUL-PearsonMainThm1}),  
 $C_{1}^{[n]}$ may be obtained replacing in (\ref{Dx-g1}) $\phi$ and $\psi(z)=dz+e$ by $\phi ^{[n]}$ and $\psi^{[n]}(z)=d_{2n}(z-\mathfrak{c}_3)+e_n$, respectively. 
 Hence,
\begin{equation}\label{g1n}
C_{1}^{[n]}=-\frac{1}{d_{2n}\alpha +a^{[n]}}\phi ^{[n]} \left(\mathfrak{c}_3 -\frac{e_n}{d_{2n}}\right)
=-\frac{1}{d_{2n+1}}\phi ^{[n]}\left(\mathfrak{c}_3 -\frac{e_n}{d_{2n}}\right) \;.
\end{equation}
Since ${\bf u}^{[n]}$ is regular, then $C_{1}^{[n]}\neq0$, and so 
the second condition in \eqref{le1a} holds.

Conversely, suppose that conditions \eqref{le1a} hold.
Define a sequence of polynomials, $(P_n)_{n\geq 0}$, by the TTRR \eqref{ttrr-Dx}--\eqref{Cn-Dx}, with $P_{-1}(z):=0$. 
According to the hypothesis \eqref{le1a}, $C_{n+1} \neq 0$ for each $n=0,1,\ldots$. Therefore, Favard's theorem ensures that $(P_n)_{n\geq 0}$ is a monic OPS. 
To prove that ${\bf u}$ is regular we will show that $(P_n)_{n\geq 0}$ is the monic OPS with respect to $\textbf{u}$. 
For that we only need to prove that $$ u_0 \neq 0\;,\quad\left\langle \textbf{u},P_n \right\rangle =0\quad (n= 1,2,\ldots)\,.$$ 
We start by showing that  $ u_0 \neq 0$. 
Indeed, suppose that  $u_0 =0$. 
Since (\ref{NUL-PearsonMainThm1}) holds, then $\left\langle \mathbf{D}_x (\phi \textbf{u})-\mathbf{S}_x (\psi \textbf{u}), z^n \right\rangle =0$ ($n=0,1,\ldots$). 
This implies 
\begin{align}
d_n u_{n+1} +\sum_{j=0}^{n} a_{n,j} u_j =0 \quad (n=0,1,\ldots)  \label{momentseq}
\end{align}
for some complex numbers $a_{n,j}$ ($j=0,1,\ldots,n$).
Since $u_0=0$ and $d_n\neq0$ for all $n=0,1,\ldots$, from (\ref{momentseq})  we deduce $u_n =0$ for each $n=0,1,\ldots$.
This implies $\textbf{u} = {\bf 0}$, contrary to the hypothesis. Therefore $u_0 \neq 0$.
Next, notice that $P_n (z) =k_n  R_n (z)$ ($n=0,1,\ldots$), where $R_n$ is defined by \eqref{Rn-Prop1} and
$k_n ^{-1}:=(-\alpha)^n \prod_{j=1} ^{n} d_{n+j-2}$. 
Therefore, by the Rodrigues formula \eqref{roformula} given in Theorem \ref{uk-is-classical}, we obtain
\begin{align*}
\langle \textbf{u}, P_n \rangle &= k_n  \langle \textbf{u}, R_n \rangle =k_n  \langle R_n \textbf{u}, 1 \rangle 
= k_n  \big\langle \mathrm{D}_x ^n \textbf{u} ^{[n]}, 1 \big\rangle= (-1)^n k_n  \big\langle \textbf{u}^{[n]}, \mathrm{D}_x ^n ~1 \big\rangle =0  
\end{align*}
for each $n=1,2,\ldots$. Hence ${\bf u}$ is regular.
The remaining statements in the theorem follow from Theorem \ref{uk-is-classical}.
\end{proof}

\begin{remark}
It is important to highlight that the Rodrigues formula holds for a functional ${\bf u}$ satisfying a  $x-$GP functional equation, even if ${\bf u}$ is not regular, provided that
the pair $(\phi,\psi)$ appearing in that $x-$GP functional equation forms a $x-$admissible pair.
This fact is a consequence of Theorem \ref{uk-is-classical}.
\end{remark}
 
\subsection{The lattice $x(s)=\mathfrak{c}_4s^2 +\mathfrak{c}_5 s +\mathfrak{c}_6$} 
We consider now a lattice $x(s)$ for $q=1$, so that $x(s)$ is a quadratic or a linear lattice.
Recall that, here, $\mathfrak{c}_4=4\beta$. We just give the results for this situation, since the techniques and computations are very similar to the ones presented for the case $q\neq1$. For the lattice $x(s)=\mathfrak{c}_4s^2 +\mathfrak{c}_5 s +\mathfrak{c}_6$ the system of difference equations \eqref{EqSy1}--\eqref{EqSy4} becomes
\begin{align*}
a^{[n+1]}&=a^{[n]} , \\ 
d^{[n+1]}&=2a^{[n]}+d^{[n]}, \\ 
b^{[n+1]}&=b^{[n]} +6\beta (a^{[n]} +d^{[n]}), \\
e^{[n+1]}&=e^{[n]} +b^{[n]} +\beta(2a^{[n]}+3d^{[n]}),\\
c^{[n+1]}&=c^{[n]}+\beta (b^{[n]} +2e^{[n]}) +\beta^2 d^{[n]}+ \Big(\beta^2 -4\beta \mathfrak{c}_6+\frac{\mathfrak{c}_5 ^2}{4} \Big)\big( a^{[n]} +d^{[n]}\big) .
\end{align*}
with the initial conditions $a^{[0]}=a$, $b^{[0]}=b$, $c^{[0]}=c$, $d^{[0]}=d$ and $e^{[0]}=e$. 
The solution of this system is 
\begin{align*}
&a^{[n]}=a\;,\quad b^{[n]}=b+6\beta n(an+d)\;,\quad d^{[n]}=2an+d \;, \\
&e^{[n]}= bn+e+2d\beta n^2 +\beta n^2 (2an+d)\;, \\
&c^{[n]}=\phi(\beta n^2)+2\beta n\psi(\beta n^2)-n\left( 4\beta \mathfrak{c}_6 -\frac{\mathfrak{c}_5 ^2}{4}\right)(an+d) 
\end{align*}
($n=0,1,\ldots$).
Thus, applying a limiting process (as $q\to1$) on Theorem \ref{main-Thm1}, we may infer and then to prove rigorously the following 

\begin{theorem}\label{main-thm-quadratic}
Consider the lattice  $$x(s)=\mathfrak{c}_4 s^2 +\mathfrak{c}_5 s +\mathfrak{c}_6\,.$$ 
Let ${\bf u}\in\mathcal{P}^*\setminus\{{\bf 0}\}$ and suppose that there exist $(\phi,\psi)\in\mathcal{P}_2\times\mathcal{P}_1\setminus\{(0,0)\}$ such that 
\begin{equation}\label{NUL-PearsonMainThm2}
\mathbf{D}_{x}(\phi{\bf u})=\mathbf{S}_{x}(\psi{\bf u})\;.
\end{equation}
Set $\phi(z):=az^2+bz+c$ and $\psi(z):=dz+e$. 
Then {\bf u} is regular if and only if
\begin{equation}\label{le1a-quadratic}
d_n\neq0\;,\quad \phi^{[n]}\left(-\beta n^2 -\frac{e_n}{d_{2n}}\right)\neq0 \quad (n=0,1,\ldots)\,, 
\end{equation}
where $d_n :=an+d$, $e_n:=bn+e+2\beta dn^2$, and 
$$\phi ^{[n]}(z)=az^2 +(b+6\beta nd_n)z+ \phi(\beta n^2)+2\beta n\psi(\beta n^2)-\frac{n}{4}\left( 16\beta \mathfrak{c}_6 -\mathfrak{c}_5 ^2\right)d_n\,.$$ 
Under these conditions, the monic OPS $(P_n)_{n\geq 0}$ with respect to ${\bf u}$ satisfies \eqref{ttrr-Dx} with 
\begin{align}
B_n &= \frac{ne_{n-1}}{d_{2n-2}} -\frac{(n+1)e_n}{d_{2n}} -2\beta n(n-1),   \label{Bn-quadratic}\\
C_{n+1} &=-\frac{(n+1)d_{n-1}}{d_{2n-1}d_{2n+1}}\phi ^{[n]}\left(-\beta n^2 -\frac{e_n}{d_{2n}}  \right)\label{Cn-quadratic}
\end{align} 
$(n=0,1,\ldots)$.
Moreover, the following functional Rodrigues formula holds:
\begin{align}\label{RodThemMain2}
P_n {\bf u} = k_n\mathbf{D}_x^n {\bf u}^{[n]}\;, \quad
k_n := (-1)^{n} \prod_{j=1} ^n d_{n+j-2} ^{-1} 
\quad(n=0,1,\ldots)\,.
\end{align}
\end{theorem}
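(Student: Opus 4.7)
The plan is to mirror the proof of Theorem \ref{main-Thm1}, specialising every step to the quadratic/linear lattice (i.e.\ $\alpha=1$, $\alpha_n=1$, $\gamma_n=n$, $\beta=\mathfrak{c}_4/4$, $\beta_n=\beta n^2$). The algebraic scaffolding is lattice-independent: Lemmas \ref{FuncEq-uk}, \ref{lemmaA}, and \ref{x-admissible} make no use of $q\neq1$, and therefore immediately give, under the hypothesis \eqref{NUL-PearsonMainThm2}, the functional equations $\mathbf{D}_x(\phi^{[k]}{\bf u}^{[k]})=\mathbf{S}_x(\psi^{[k]}{\bf u}^{[k]})$, the auxiliary identities \eqref{lemmaA1}--\eqref{lemmaA3}, and (in the regular case) the fact that $(P_n^{[k]})_{n\geq0}$ is the monic OPS with respect to ${\bf u}^{[k]}$.

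The preparatory step is to establish the analogue of the Rodrigues formula \eqref{roformula} on the quadratic lattice. I would re-run the induction in the proof of Theorem \ref{uk-is-classical} with the $q=1$ substitutions, using the explicit forms of $a^{[n]},b^{[n]},c^{[n]},d^{[n]},e^{[n]}$ given immediately before the theorem statement to identify the polynomials $\xi_2(\cdot;n)$ and $\eta_2(\cdot;n)$. This should lead to $\mathbf{D}_x^{n+1}{\bf u}^{[n+1]}=(A(\cdot;n)R_n+B(\cdot;n)R_{n-1}){\bf u}$ with $A(z;n)=a_nz-s_n$ and $B(z;n)=-t_n$ for suitable $a_n,s_n,t_n$; after simplification these become (with $\alpha=1$) the coefficients of the TTRR matching the claimed $k_n,B_n,C_{n+1}$ of the theorem. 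A continuity argument letting $q\to1$ with $\mathfrak{c}_3\sim\beta/(1-\alpha)$ is an alternative, but it requires delicate cancellation bookkeeping and I expect the direct redo to be cleaner.

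Granted the Rodrigues formula, the forward direction follows verbatim from the proof of Theorem \ref{main-Thm1}: if ${\bf u}$ is regular, Lemma \ref{x-admissible} forces $d_n\neq0$ and the regularity of each ${\bf u}^{[n]}$; then computing $C_1^{[n]}$ from the Pearson equation for ${\bf u}^{[n]}$ (i.e.\ taking the action against $1$ and $z$ to solve for $u_1^{[n]}/u_0^{[n]}$ and $u_2^{[n]}/u_0^{[n]}$, and substituting into $C_1^{[n]}=u_2^{[n]}/u_0^{[n]}-(u_1^{[n]}/u_0^{[n]})^2$) should yield $C_1^{[n]}=-\phi^{[n]}(-\beta n^2-e_n/d_{2n})/d_{2n+1}$, whose nonvanishing delivers the second condition in \eqref{le1a-quadratic}. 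Conversely, given \eqref{le1a-quadratic}, define $(P_n)$ by the TTRR with the specified $B_n,C_{n+1}$: Favard's theorem produces a monic OPS, $u_0\neq0$ follows from the moment recurrence $d_nu_{n+1}+\sum_{j=0}^{n}a_{n,j}u_j=0$ together with ${\bf u}\neq\mathbf{0}$, and the orthogonality $\langle{\bf u},P_n\rangle=k_n\langle P_n{\bf u},1\rangle=(-1)^nk_n\langle{\bf u}^{[n]},\mathrm{D}_x^n\,1\rangle=0$ ($n\geq1$) follows from the Rodrigues formula.

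The main obstacle I anticipate is exactly the adaptation of Theorem \ref{uk-is-classical} to the $q=1$ case: several of the identities \eqref{gamma-n-bis}--\eqref{form-5} degenerate at $\alpha=1$ (for instance $(1-\alpha^2)\gamma_n\equiv0$), so the algebraic manipulations leading from \eqref{Azn-initial}--\eqref{Bzn-initial} to \eqref{Azn-final}--\eqref{Bzn-final} must be re-done using the $q=1$ explicit solutions of \eqref{EqSy1}--\eqref{EqSy4} rather than their $q\neq1$ counterparts. Once the correct forms of $A(z;n)$ and $B(z;n)$ are recovered in this degenerate setting, the remainder of the proof (Favard, the $u_0\neq0$ argument, and the orthogonality from Rodrigues) is entirely routine.
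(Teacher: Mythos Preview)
Your proposal is correct and mirrors the paper's own approach: the paper gives no detailed proof of Theorem \ref{main-thm-quadratic}, stating only that ``the techniques and computations are very similar to the ones presented for the case $q\neq1$'' and that one may ``apply a limiting process (as $q\to1$) on Theorem \ref{main-Thm1}'' to infer the formulas and then prove them rigorously. Your plan---redoing the induction of Theorem \ref{uk-is-classical} with the $q=1$ data (the explicit $a^{[n]},b^{[n]},c^{[n]},d^{[n]},e^{[n]}$ listed just before the theorem), and then copying the regularity argument of Theorem \ref{main-Thm1} verbatim---is exactly this, and your diagnosis of the one nontrivial point (that the identities \eqref{gamma-n-bis}--\eqref{form-5} collapse at $\alpha=1$, so the passage from \eqref{Azn-initial}--\eqref{Bzn-initial} to \eqref{Azn-final}--\eqref{Bzn-final} must use the $q=1$ expressions directly rather than their $q\neq1$ counterparts) is accurate.
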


\begin{remark}
For $x(s)=\mathfrak{c}_6$ we immediately recover Theorem B.
\end{remark}

\section{Two examples: Racah and Askey-Wilson polynomials}\label{secAW}

In this section we derive the recurrence coefficients for the Racah and the Askey-Wilson polynomials,
using as departure point their associated $x-$GP functional equations.
It is worth mentioning that Theorem \ref{main-Thm1} allow us also to answer positively to a conjecture posed by Ismail \cite[Conjecture 24.7.8]{I2005}.
Since the proof of this conjecture is rather technique and involves long computations, it will be presented in a different work \cite{KDPconj}.

\subsection{The Racah polynomials}
Consider the quadratic lattice $$x(s)=s(s+a+b+1)\,,$$ where $a,b,c,d\in\mathbb{C}$.  
Let ${\bf u}\in\mathcal{P}^*$ be a functional satisfying \eqref{NUL-PearsonMainThm2}, where 
\begin{align*}
\phi(z)&:=2z^2 +[(a+b+2c+3)d+c(a-b+3)+2(a+b+ab+2)   ]z \\ &\quad +(1+a)(1+d)(a+b+1)(b+c+1) ,\\ 
\psi(z)&:=2(d+c+2)z+2(1+a)(1+d)(b+c+1).
\end{align*}
According to \eqref{le1a-quadratic} in Theorem \ref{main-thm-quadratic}, ${\bf u}$ is regular if and only if 
$$
\{a,c,d,d+c-1,b+c,c+d-a,d-b\}\cap\mathbb{Z}^-=\emptyset\;,
$$
where $\mathbb{Z}^-:=\{-1,-2,\ldots\}$.
Under these conditions, by \eqref{Bn-quadratic}--\eqref{Cn-quadratic}, the recurrence coefficients for the monic OPS $(P_n)_{n\geq 0}$ with respect to ${\bf u}$
are given by
\begin{align*}
B_n & =-\frac{(n+a+1)(n+d+1)(n+b+c+1)(n+d+c+1)}{(2n+d+c+1)(2n+d+c+2)} \\
&\qquad -\frac{n(n+c)(n+d+c-a)(n+d-b)}{(2n+d+c)(2n+d+c+1)} ,\\
C_{n+1}&= (n+1)(n+a+1)(n+c+1)(n+d+1)\\
&\qquad\times\frac{(n+d+c+1)(n+b+c+1)(n+c+d-a+1)(n+d-b+1)}{(2n+d+c+1)(2n+d+c+2)^2(2n+d+c+3)} 
\end{align*}
($n=0,1,\ldots$). Therefore,
$$
P_n(z)=R_n (z;d,c,a,b)\quad (n=0,1,\ldots),
$$ 
$(R_n(.;d,c,a,b))_{n\geq 0}$ being the sequence of the monic Racah polynomials \cite[p.190]{KLS2010}. 

\subsection{The Askey-Wilson polynomials}
Consider the $q-$quadratic lattice 
$$x(s)=\mathfrak{c}_1 q^{-s}+\mathfrak{c}_2 q^{s}+\mathfrak{c}_3\,.$$ 
Let ${\bf u}$ be a linear functional on $\mathcal{P}$ satisfying \eqref{NUL-PearsonMainThm1}, where $\phi$ and $\psi$ are given by
\begin{align*}
\phi (z)&= 2(1+abcd)(z-\mathfrak{c}_3)^2 \\
&\quad -2\sqrt{\mathfrak{c}_1\mathfrak{c}_2}(a+b+c+d+abc+abd+acd+bcd)(z-\mathfrak{c}_3) \\
&\quad+4\mathfrak{c}_1\mathfrak{c}_2(ab+ac+ad+bc+bd+cd-abcd-1), \\
\psi (z)&= \frac{4q^{1/2}}{q-1}\Big((abcd-1)(z-\mathfrak{c}_3) \\
&\quad+\sqrt{\mathfrak{c}_1\mathfrak{c}_2}(a+b+c+d-abc-abd-acd-bcd) \Big) ,
\end{align*}
with $a,b,c,d\in \mathbb{C}$.
The parameter $d_n$ given by \eqref{dnen-Prop} reads as
$$
d_n=-\left(\mbox{$\frac{q^{1/2}-q^{-1/2}}{2}$}\right)^{-1}q^{-n/2}\big(1-abcd q^n\big) \,. 
$$
By Theorem \ref{main-Thm1}, ${\bf u}$ is regular if and only if
$$
\begin{array}l
\mathfrak{c}_1\mathfrak{c}_2(1-abcdq^n)(1-abq^n)(1-acq^n) \\ [0.5em]
\qquad\quad\times(1-adq^n)(1-bcq^n)(1-bdq^n)(1-cdq^n) \neq 0\quad(n=0,1,\ldots)\,.
\end{array}
$$
Under these conditions, using formulas \eqref{Bn-Dx}--\eqref{Cn-Dx} in Theorem \ref{main-Thm1}, we obtain the recurrence coefficients for the monic OPS $(P_n)_{n\geq 0}$ with respect to ${\bf u}$:
\begin{align*}
B_n =\mathfrak{c}_3+2 \sqrt{\mathfrak{c}_1\mathfrak{c}_2}&\left[ a+\frac{1}{a}-\frac{(1-abq^n)(1-acq^n)(1-adq^n)(1-abcdq^{n-1})}{a(1-abcdq^{2n-1})(1-abcdq^{2n})}\right.   \\
&\quad \quad \quad \left. -\frac{a(1-q^n)(1-bcq^{n-1})(1-bdq^{n-1})(1-cdq^{n-1})}{(1-abcdq^{2n-1})(1-abcdq^{2n-2})}\right]
\end{align*}
(if $a=0$, define $B_n$ by continuity, taking $a\to0$ in the preceding expression), and
\begin{align*}
C_{n+1}&=\mathfrak{c}_1\mathfrak{c}_2(1-q^{n+1})(1-abcdq^{n-1}) \\
&\quad\times \frac{(1-abq^n)(1-acq^n)(1-adq^n)(1-bcq^n)(1-bdq^n)(1-cdq^n)}{(1-abcdq^{2n-1})(1-abcdq^{2n})^2 (1-abcdq^{2n+1})}
\end{align*}
($n=0,1,2,\ldots$).
Hence 
$$P_n (z)= 2^n(\mathfrak{c}_1\mathfrak{c}_2)^{n/2} Q_n \left(\frac{z-\mathfrak{c}_3}{2\sqrt{\mathfrak{c}_1\mathfrak{c}_2}};a,b,c,d|q  \right)\quad (n=0,1,\ldots),$$
where $(Q_n(\cdot;a,b,c,d|q))_{n\geq0}$ is the monic OPS of the Askey-Wilson polynomials (see \cite[(14.1.5)]{KLS2010}).

\section*{Acknowledgements }
This work is supported by the Centre for Mathematics of the University of Coimbra -- UID/MAT/00324/2019, funded by the Portuguese Government through FCT/MEC and co-funded by the European Regional Development Fund through the Partnership Agreement PT2020. DM is also supported by the FCT grant PD/BD/135295/2017.

\end{document}